\newtheoremstyle{mystyle}{}{}{\slshape}{2pt}{\scshape}{.}{ }{}
\newtheorem{thm}{Theorem}[section]
\newtheorem{cor}[thm]{Corollary}
\newtheorem{prop}[thm]{Proposition}
\newtheorem{lemme}[thm]{Lemma}
\newtheorem{lem}[thm]{Lemma}
\newtheorem{fait}[thm]{Fact}
\newtheorem{question}[thm]{Question}
\newtheorem{fact}[thm]{Fact}
\theoremstyle{definition}
\newtheorem{defi}[thm]{Definition}
\theoremstyle{mystyle}
\theoremstyle{remark}
\newtheorem{rem}[thm]{Remark}
\newtheorem*{claim}{Claim}
\newtheorem*{claim1}{Claim 1}
\newtheorem*{claim1'}{Claim 1'}
\newtheorem*{claim2}{Claim 2}
\newtheorem*{claim3}{Claim 3}
\newtheorem*{claim4}{Claim 4}
\newtheorem*{claim5}{Claim 5}
\newenvironment{claimproof}
    {\begin{proof}}{ \end{proof}}
\newcommand{\monster}{\mathcal U}
\newcommand{\ntp}{NTP$_2$ }
\newcommand{\PRCB}{T}
\newcommand{\LC}{\mathcal{L}}
\newcommand{\LCR}{\mathcal{L_{\text{ring}}}}
\newcommand{\Li}{\mathcal{L}^{(i)}_{\text{ring}}}
\newcommand{\clos}[2]{#1^{(#2)}}
\newcommand{\acl}{\mathrm{acl}}
\newcommand{\dcl}{\mathrm{dcl}}
\newcommand{\tp}{\mathrm{tp}}
\newcommand{\qftp}{\mathrm{qftp}}
\newcommand{\cG}{\cdot_G}
\newcommand{\ignore}[1]{}
\def\OK{\overline{K}}
\newcommand{\nf}{\times_{nf}}
\def\indsym#1#2{%
 \setbox0=\hbox{$\m@th#1x$}%
 \kern\wd0%
 \hbox to 0pt{\hss$\m@th#1\mid$\hbox to 0pt{$\m@th#1^{#2}$\hss}\hss}%
 \lower.9\ht0\hbox to 0pt{\hss$\m@th#1\smile$\hss}%
 \kern\wd0}
\newcommand{\ind}[1][]{\mathop{\mathpalette\indsym{#1}}}
\def\nindsym#1#2{%
 \setbox0=\hbox{$\m@th#1x$}%
 \kern\wd0%
 \hbox to 0pt{\hss$\m@th#1\not$\kern1.4\wd0\hss}
 \hbox to 0pt{\hss$\m@th#1\mid$\hbox to 0pt{$\m@th#1^{#2}$\hss}\hss}%
 \lower.9\ht0\hbox to 0pt{\hss$\m@th#1\smile$\hss}%
 \kern\wd0}
\title{Stabilizers, groups with f-generics in NTP$_2$ and PRC fields}
\author{Samaria Montenegro\footnotemark[1]  \and Alf Onshuus\thanks{Partially supported by Colciencias grant number 120471250707}
 \and Pierre Simon\footnote{Partially supported by ValCoMo
(ANR-13-BS01-0006), NSF (grant DMS
1665491), and the Sloan foundation.}}
\date{}
\begin{document}

\maketitle

\begin{abstract}In this paper we develop three different subjects. We study and
prove alternative versions of Hrushovski's ``Stabilizer Theorem'',
we generalize part of the basic theory of definably amenable NIP
groups to NTP$_2$ theories, and finally, we use all this machinery
to study groups with f-generic types definable in bounded PRC
fields.

\smallskip
  \noindent \textbf{Keywords:} Model theory, PRC, f-generics, definable groups, NIP, NTP$_2$.

 \noindent \textbf{Mathematics Subject Classification:} Primary 03C45, 03C60; Secondary  03C98.

\end{abstract}

\section{Introduction}

This paper has three main parts, each of which we believe may be
of independent interest.

Section 2 is very much self contained, and only requires knowledge
of basic concepts of model theory, all of which are contained in
(for instance) the introduction of \cite{HrPi}. It is devoted to
the study of S1 ideals and various versions of Hrushovski's
Stabilizer Theorem (Fact \ref{fact_stab}). We prove two variations
on it. Theorem \ref{th_babystab} is very close to Hrushovski's
original theorem but
 allows what we feel is a simpler proof and more natural hypothesis. Theorem
\ref{th_stabilizer} on the other hand, substantially weakens the
hypothesis on the S1 ideal. We use it to generalize several
results in \cite{HrPi} and \cite{Ba} by proving an ``algebraic
group chunk theorem'' (Theorem \ref{algebraic group chunk}) in
many geometric theories. Reading \cite{Ba} and conversations with
Barriga made us realize that Theorem \ref{algebraic group chunk}
implied that every torsion free group definable in a real closed
field $R$ is semi-algebraically isomorphic to the $R$ points of an
algebraic group $H$, a result we believe was previously unknown
which we include as a corollary.

In Section \ref{SGroups} we prove some results about groups
definable in an NTP$_2$ theory admitting f-generic types. We
generalize some basic statements proved in \cite{CS} for definably
amenable NIP groups. Apart from the use of Theorem \ref{th_babystab}, this section
is self contained.

The rest of the paper is devoted to the study of groups definable in
bounded PRC fields.

A field is PRC if every absolutely irreducible variety which has
zeros in every real closed extension has a zero in the field.
Hence PRC fields generalize both the notions of real closed fields
and of pseudo algebraically closed fields (PAC). It is shown in
\cite{Mon} that bounded PRC fields are NTP$_2$, a notion which
generalizes the better known concepts of dependent theories and
simple theories.   Since bounded PAC fields have been a very
inspirational example of a simple unstable field,  and real closed
fields are one of the main examples of dependent fields, bounded
PRC fields are examples of NTP$_2$ fields, the study of which
might enable us to predict which properties can and cannot hold in
an NTP$_2$ theory.

In Section \ref{STheorem} we try to understand definable groups in a bounded
PRC field, assuming in addition existence of f-generic types. We prove
that such a group is isogeneous to a finite index subgroup of a
quantifier-free definable group (Theorem \ref{th_main}). In fact,
the latter group admits a definable covering by multi-cells on
which the group operation is algebraic. This generalizes similar
results proved in \cite{HrPi} by Hrushovski and Pillay for (not
necessarily f-generic) groups definable in both pseudofinite
fields and real closed fields. Our theorem applies in particular
to all solvable groups.

In Section \ref{SPRC} we recall some results on PRC fields and
prove that the expansion of a bounded PRC field obtained by adding
all quantifier-free externally definable sets has elimination of
quantifiers.

The sketch of the proof of the main theorem is as follows: After
an initial reduction to groups of finite index, we use Theorem
\ref{algebraic group chunk} to show that given a group $G$ with
f-generics definable in a bounded PRC field, there is an
algebraic group $H$ and a (relatively) definable isomorphism
between type-definable subgroups $G^{00}_M$ of $G$ and $K$ of $H$
where $G^{00}_M$ is the maximum type definable over $M$ subgroup
of $G$. In Section \ref{SAlgebraic} we show that for any such $K$
(a type definable subgroup of an algebraic group), if
$\overline{K}$ denotes the topological closure of  $K$, then
$\overline{K}/K$ is profinite. The proof then continues adapting
the proofs in \cite{HrPi} for the pseudo-finite case and for the
real closed case.

\section{S1 ideals and Stabilizer theorems}\label{SStabilizer}

Let $M$ be a model and let $G$ be an $M$-definable group. Let
$\mu$ be an $M$-invariant ideal of definable subsets of $G$ which
is invariant by left translations by elements of $G$. We say that
a type $p(x)$ in $G$ is \emph{$\mu$-wide} if it is not contained
in a set $D\in \mu$. If the ideal $\mu$ is fixed and no confusion
can arise, we will refer to $\mu$-wide types as ``wide''.

A key concept we will need is Hrushovski's definition of an S1
ideal.

\begin{defi}
An $A$-invariant ideal $\mu$ has the \emph{S1 property} if whenever $(
a_j)_{j \in \omega}$ is an $A$-indiscernible sequence and
$\phi(x,y)$ is a formula, then if $\phi(x,a_i)\wedge \phi(x,a_j)$
is in $\mu$ for some/all $i\neq j$, then $\phi(x,a_i)$ is in $\mu$
for some/all $i$.

We will say that the ideal $\mu$ is \emph{S1 on the $A$-definable set
$X$} if $X$ is not in $\mu$ and the property above holds for
formulas $\phi(x,a_i)$ included in $X$. Finally, we say that $\mu$
is \emph{S1 on a partial type $\pi(x)$} if $\pi(x)$ is $\mu$-wide and
included in a definable set on which $\mu$ is S1.
\end{defi}

The following results all appear in \cite{Hru12}.

\begin{fait}\label{HrStable}
Let $p$ be a type and assume that $\mu$ has the S1 property. Then for any type $q$ the relation
\[
R(a,b) \iff a^{-1}p(x) \cap b^{-1}q(x) \text{ is $\mu$-wide},
\]
where we identify a type with its realizations in the monster model, is a stable relation.
\end{fait}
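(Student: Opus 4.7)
The plan is to argue by contradiction: assuming $R$ has the order property, I build from an indiscernible witness a configuration violating S1. I first record the working reformulation: since $p$ and $q$ are closed under finite conjunctions and $\mu$ is closed under subsets, $\neg R(a,b)$ holds iff there exist $\phi \in p$ and $\psi \in q$ with $\phi(ax) \wedge \psi(bx) \in \mu$, so $R(a,b)$ conversely forces every such conjunction to be $\mu$-wide.

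Suppose $R$ is not stable. Then there is an indiscernible sequence $((a_i, b_i))_{i<\omega}$, over a base containing the parameters of $\mu$, witnessing the order property. Passing to the shifted sub-sequence $(a_i', b_i') := (a_{2i}, b_{2i+1})$, which remains indiscernible over the base, I may assume the non-strict form
\[
R(a_i', b_j') \iff i \leq j,
\]
placing the diagonal on the wide side. From $\neg R(a_1', b_0')$ I extract witnesses $\phi \in p$ and $\psi \in q$ with $\phi(a_1' x) \wedge \psi(b_0' x) \in \mu$, and indiscernibility over the base together with invariance of $\mu$ propagates this to $\phi(a_l' x) \wedge \psi(b_k' x) \in \mu$ for every $l > k$.

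Setting $c_k := (a_k', b_k')$ and $\eta(x; y, z) := \phi(yx) \wedge \psi(zx)$, two observations close the argument. First, $\eta(x; c_k)$ is $\mu$-wide, since $R(a_k', b_k')$ makes the partial type $(a_k')^{-1}p \cap (b_k')^{-1}q$ wide and this partial type implies $\eta(x; c_k)$. Second, for $k < l$ the conjunction $\eta(x; c_k) \wedge \eta(x; c_l)$ implies $\phi(a_l' x) \wedge \psi(b_k' x)$, which is in $\mu$; hence the conjunction itself lies in $\mu$. Applying S1 to the indiscernible sequence $(c_k)$ and the formula $\eta$ forces each $\eta(x; c_k)$ to be in $\mu$, contradicting wideness.

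The main subtlety is the initial re-indexing that places the diagonal on the wide side of the order property. With the strict form $R(a_i, b_j) \iff i < j$, the diagonal $R(a_i, b_i)$ would fail, so $\eta(x; c_k)$ would itself lie in $\mu$ and S1 would yield no contradiction. Once the diagonal is wide, the rest is a direct combination of indiscernibility, invariance of $\mu$, and the S1 axiom.
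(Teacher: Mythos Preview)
Your proof is correct. The paper does not actually prove Fact~\ref{HrStable}; it cites it from \cite{Hru12}. What the paper \emph{does} prove is the unnamed lemma immediately following the cited facts, a strengthening in which $\mu$ is only assumed S1 on \emph{medium} sets rather than globally. That proof proceeds by a case split on whether $g_0 p \cap g_1 p \cap h_2 q$ is wide, so as to locate a medium set inside which S1 can be invoked. Your argument is different and, for the statement as given (global S1), cleaner: you extract a single formula $\eta(x;y,z)=\phi(yx)\wedge\psi(zx)$ from one failure of $R$, use indiscernibility and invariance of $\mu$ to make all off-diagonal conjunctions small, and apply S1 once. This avoids the case analysis entirely, at the cost of not adapting to the localized-S1 setting the paper needs later.

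One small point: your re-indexing $(a_i',b_i')=(a_{2i},b_{2i+1})$ converts the strict order pattern $i<j$ to the non-strict $i\le j$, but would not fix the reversed pattern $i>j$. A one-line remark that instability yields, after possibly reversing the sequence, a witness with $R(a_i,b_j)\iff i<j$ would make the ``I may assume'' step fully explicit; this is harmless and the paper's own proof of the lemma simply starts from $i\le j$ without comment.
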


\begin{fait}\label{wide}
Let $\mu$ be an $M$-invariant ideal which is S1 on some set $X$.
Then for any type $p(x)$ whose realizations are contained in $X$,
if $p(x)$ is $\mu$-wide, then $p(x)$ does not fork over $M$.
\end{fait}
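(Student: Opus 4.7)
The plan is to argue by contradiction: I assume $p(x)$ is $\mu$-wide with realizations in $X$ but forks over $M$, and derive a contradiction using the S1 property. By the definition of forking, there is a formula $\theta(x)\in p$ and formulas $\psi_0(x,c_0),\dots,\psi_{n-1}(x,c_{n-1})$, each dividing over $M$, with $\theta(x)\vdash\bigvee_{i<n}\psi_i(x,c_i)$. Since all realizations of $p$ lie in $X$, I can absorb $x\in X$ into each $\psi_i$, so that each $\psi_i(x,c_i)$ is included in $X$ and still divides over $M$. If every $\psi_i(x,c_i)$ were in $\mu$, then the disjunction and hence $\theta$ would be in $\mu$, contradicting wideness of $p$. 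Thus it suffices to derive a contradiction from the assumption that a single formula $\psi(x,c)\subseteq X$ divides over $M$ yet is $\mu$-wide.

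Fix such $\psi$ together with an $M$-indiscernible sequence $(c_i)_{i<\omega}$ with $c_0=c$ such that $\{\psi(x,c_i):i<\omega\}$ is $k$-inconsistent for some $k\geq 2$. For each $n\geq 0$, set
\[
\phi_n(x;c_0,\dots,c_{2^n-1}) \;:=\; \bigwedge_{i<2^n}\psi(x,c_i),
\]
which is still included in $X$. The goal is to show by induction on $n$ that $\phi_n\notin\mu$. The case $n=0$ is the assumption. For the induction step, partition the sequence $(c_i)$ into consecutive blocks of length $2^n$, obtaining the $M$-indiscernible sequence $(\bar d_j)_{j<\omega}$ with $\bar d_j=(c_{j\cdot 2^n},\dots,c_{(j+1)\cdot 2^n-1})$. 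By induction $\phi_n(x,\bar d_0)\notin\mu$, so the contrapositive of the S1 property on $X$ (applied to the formula $\phi_n$ with parameter tuple $\bar d$) yields $\phi_n(x,\bar d_0)\wedge\phi_n(x,\bar d_1)\notin\mu$; but that conjunction is exactly $\phi_{n+1}$.

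To finish, I choose $n$ with $2^n\geq k$. Then $\phi_n$ is inconsistent by $k$-inconsistency, hence defines the empty set, which lies in every ideal and in particular in $\mu$ --- contradicting $\phi_n\notin\mu$.

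The substance of the argument is the doubling trick that bootstraps the pairwise S1 axiom into an $n$-wise non-forking statement; I do not expect a real obstacle here beyond correctly indexing the indiscernible blocks. The only point to watch is that at each step $\phi_n$ remains included in $X$, so that S1 on $X$ genuinely applies; this is automatic since $\phi_n$ implies $\psi(x,c_0)\subseteq X$. The initial reduction to a single dividing formula is routine, using only that $\mu$ is an ideal and that $p$ is $\mu$-wide.
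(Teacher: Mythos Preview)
Your proof is correct and is essentially the standard argument. The paper itself does not prove this fact: it simply attributes it to \cite{Hru12} along with the neighboring facts, so there is no paper proof to compare with beyond noting that the doubling argument you give is the one Hrushovski uses.

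One small imprecision in your write-up: when you say ``$\phi_n$ implies $\psi(x,c_0)\subseteq X$'', that literally covers only $\phi_n(x,\bar d_0)$, whereas to apply S1 you need each $\phi_n(x,\bar d_j)$ to lie in $X$. This is fine for the reason you already arranged---after absorbing $x\in X$ into $\psi$, every instance $\psi(x,c_i)$ implies $x\in X$, hence so does every $\phi_n(x,\bar d_j)$---but the sentence as written slightly understates why. Also implicit (and true) is that $X$ is $M$-definable, which is part of the definition of ``$\mu$ is S1 on $X$'' for an $M$-invariant ideal; this is what makes the absorption step preserve dividing over $M$.
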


Finally, the following  is Lemma 2.3 in \cite{Hru12}.

\begin{fait}
Let $p,q$ be complete types over a model $M$ and let $R(x,y)$ be a stable $M$-invariant relation in the realizations of $p(x)\times q(y)$. Then the truth value of $R(a,b)$ is constant
for all $a\models p(x)$ and $b\models q(y)$ as long as either $\tp(a/Mb)$ or $\tp(b/Ma)$ does not fork over $M$.
\end{fait}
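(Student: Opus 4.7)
The plan is to argue that, since $R$ is stable on realizations of $p\times q$, the truth value of $R$ on a pair $(a,b)$ with $a\models p$, $b\models q$ satisfying the non-forking hypothesis is entirely determined by $p$ and $q$, via a standard order-property argument.

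First, I would reduce to a single case. If $\tp(b/Ma)$ does not fork over $M$, then apply the argument to $R'(y,x):=R(x,y)$ (also stable on realizations of $q\times p$), interchanging the roles of $a,b$ and of $p,q$. So I assume $\tp(a/Mb)$ does not fork over $M$.

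Next, I would argue by contradiction. Suppose there exist $a,a'\models p$ and $b,b'\models q$ with both $\tp(a/Mb)$ and $\tp(a'/Mb')$ non-forking over $M$, and $R(a,b)\wedge\neg R(a',b')$. Using that non-forking types over the model $M$ admit global non-forking extensions and that such extensions can be iterated, I would construct a sequence $(a_n,b_n)_{n<\omega}$ of realizations of $p\times q$ in which, at each stage, $\tp(a_n/M b_0\cdots b_{n-1})$ and $\tp(b_n/Ma_0\cdots a_{n-1})$ are non-forking over $M$, while the $R$-pattern along the sequence is partially prescribed by interleaving copies of the configurations $(a,b)$ and $(a',b')$. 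By Ramsey and compactness, I would then pass to an $M$-indiscernible sub-sequence $(\bar a_n,\bar b_n)_{n<\omega}$ of realizations of $p\times q$ on which the truth value of $R$ still varies.

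On this indiscernible sequence, indiscernibility forces the truth value of $R(\bar a_i,\bar b_j)$ to depend only on the order type of $(i,j)$, and non-constancy combined with the construction produces an order property for $R$ on realizations of $p\times q$, contradicting the stability hypothesis. The main obstacle I expect is the combinatorial bookkeeping in the alternating construction: I need to ensure simultaneously that (i) each $a_n$ and $b_n$ genuinely realizes $p$ and $q$ over the current parameters, (ii) the non-forking condition is preserved at each step, and (iii) the varying $R$-pattern survives passage to an indiscernible sub-sequence as a genuine order property rather than some weaker asymmetric configuration. The hypothesis that $M$ is a model is essential here, since it guarantees that non-forking extensions exist and can be iterated along the construction.
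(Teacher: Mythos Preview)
The paper does not prove this statement; it is cited without proof as Lemma~2.3 of \cite{Hru12}. So there is no in-paper argument to compare against, and I evaluate your sketch on its own.

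Your overall strategy is right and the reduction to one side is fine, but the gap is precisely the one you yourself flag as obstacle~(iii), and the construction as described does not overcome it. By ``interleaving copies of $(a,b)$ and $(a',b')$'' you only prescribe the \emph{diagonal} values $R(a_n,b_n)$; the non-forking condition on $\tp(a_n/Mb_{<n})$ does not determine the off-diagonal values $R(a_n,b_j)$, since in an arbitrary theory non-forking extensions over a model need not be $M$-invariant. When you then pass to an $M$-indiscernible sequence, the diagonal becomes constant and whatever alternation you arranged is erased---you are left with no order-property witness. (The pattern $R(a_i,b_j)\iff i=j$, for instance, does not contradict stability.)

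The missing idea is to compare the configuration to a genuinely $M$-invariant reference. Take a global coheir $q^*$ of $q$; for any $c\models p$, the value of $R(c,d)$ with $d\models q^*|_{Mc}$ depends only on $p$---call it $\epsilon$. If $R(a,b)\neq\epsilon$, build a Morley sequence $(b_j)_j$ in $q^*$ with $b_0=b$; it is $M$-indiscernible. Since $\tp(a/Mb)$ does not divide over $M$, for each $i$ there is $a_i$ with $\tp(a_ib_j/M)=\tp(ab/M)$ for all $j\le i$, so $R(a_i,b_j)$ holds for $j\le i$; choosing $b_{i+1}\models q^*$ over everything so far forces $\neg R(a_k,b_{i+1})$ for $k\le i$ by definition of $\epsilon$. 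This yields $R(a_i,b_j)\iff i\ge j$ directly---the order property---with no Ramsey extraction needed. A symmetric argument (and a comparison of the two coheir values $R(p^*,q)$ and $R(p,q^*)$) handles the remaining case.
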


The usefulness of S1 ideals can be seen in the following
proposition (which we will use in Section \ref{ShelahExpPRC}). It
is basically Lemma 6.1 in \cite{HrPi}, but the contexts are
not exactly the same.

\begin{defi}
A \emph{definable ideal}, is defined to be an ideal $\mu$ such
that for any $\phi(x;y)$, the set $\{b : \phi(x;b)\in \mu\}$ is
definable.
\end{defi}

\begin{prop}\label{TdefGps}
Let $G$ be a definable group equipped with a definable (left-)
$G$-invariant S1 ideal $\mu$. Let $H\leq G$ be a type-definable
subgroup of $G$ which is $\mu$-wide, then $H$ is the intersection
of definable subgroups of $G$.
\end{prop}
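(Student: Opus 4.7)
To show that $H$ is the intersection of definable subgroups of $G$, it suffices to produce, for each definable set $\phi_0(x)\supseteq H$, a definable subgroup $K\leq G$ with $H\leq K\subseteq \phi_0(G)$. Using that $H$ is a subgroup (so $H\cdot H^{-1}=H\subseteq\phi_0$) together with compactness applied to the type-definable $H$, first shrink to obtain a definable $\phi(x)\supseteq H$ with $\phi\cdot\phi^{-1}\subseteq\phi_0$.

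Extend $H$ to a complete global $\mu$-wide type $p(x)$, which is possible since $H$ is $\mu$-wide; by Fact \ref{wide}, $p$ does not fork over any small model $M$ carrying the relevant parameters. The central object is the \emph{S1-stabilizer}
$$S := \{g\in G : p\cup gp \text{ is $\mu$-wide as a partial type}\} \;=\; \bigcap_{\psi\in p} L_\psi,$$
where $L_\psi := \{g\in G : \psi(x)\wedge\psi(g^{-1}x) \text{ is $\mu$-wide}\}$, and each $L_\psi$ is definable thanks to definability of $\mu$. Stability of the wideness relation (Fact \ref{HrStable}), combined with the S1 property and $G$-invariance of $\mu$, shows that $S$ is a subgroup of $G$ — this is essentially the content of Theorem \ref{th_babystab} (Hrushovski's Stabilizer Theorem). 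Since any realization of $p\cup gp$ lies in $H\cap gH$, the containment $S\leq H\cdot H^{-1}=H$ is automatic; for the reverse inclusion $H\leq S$, one arranges that $hp=p$ for every $h\in H$ (making $H\leq\mathrm{Stab}(p)\leq S$), which can be done by a saturation/averaging argument on the $H$-orbit of wide complete types extending $H$. Thus $H=S$.

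It remains to refine $H=\bigcap_{\psi\in p} L_\psi$ into an intersection of definable \emph{subgroups} contained in $\phi_0$. Because $H$ is a group, $H\cdot H\subseteq L_\psi$ for each $\psi\in p$, so by compactness there is $\psi'\in p$ with $L_{\psi'}\cdot L_{\psi'}\subseteq L_\psi$, and we may also take $L_{\psi'}$ symmetric. Iterating yields a sequence $\psi_0=\psi,\psi_1,\psi_2,\ldots\in p$ with $L_{\psi_{n+1}}^2\subseteq L_{\psi_n}$ and $L_{\psi_n}$ symmetric for each $n$. A further compactness argument, exploiting the definability of $\mu$ together with the fact that $\bigcap_n L_{\psi_n}\supseteq H$ is already a subgroup, then produces a definable subgroup inside $L_{\psi_0}$ containing $H$; choosing $\psi_0$ small enough that $L_{\psi_0}\subseteq\phi\cdot\phi^{-1}\subseteq\phi_0$ finishes the argument. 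The main obstacle is precisely this final conversion from the type-definable intersection to an intersection of definable subgroups: the type-definable level is handled by the Stabilizer Theorem, but passing to \emph{definable} subgroups requires genuinely exploiting the definability of $\mu$ beyond what the stabilizer theorem gives on its own.
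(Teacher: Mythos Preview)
Your proposal has real gaps and diverges from the paper's argument in ways that matter.

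\textbf{Gap 1: the $H$-invariant wide type.} You assert that one can ``arrange that $hp=p$ for every $h\in H$'' via a ``saturation/averaging argument''. This is the crux of your approach and it is not justified. The group $H$ is merely type-definable, not compact, and there is no reason an $H$-fixed complete wide type should exist. Without it, the inclusion $H\leq S$ fails and the identification $H=S$ collapses. (Also note: Theorem~\ref{th_babystab} does \emph{not} say that $St(p)$ is a subgroup; it says $St(p)^2$ is. So your appeal to it to conclude that $S$ itself is a subgroup is a misreading, though this point becomes moot if $S=H$ could be established directly.)

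\textbf{Gap 2: passing to definable subgroups.} You correctly identify this as the main obstacle, but your sketch does not resolve it. Having a descending chain $L_{\psi_n}$ with $L_{\psi_{n+1}}^2\subseteq L_{\psi_n}$ and $\bigcap_n L_{\psi_n}\supseteq H$ is exactly the situation one starts from (write $H=\bigcap H_n$ with $H_{n+1}^2\subseteq H_n$); it is not progress. Compactness alone does not produce a definable subgroup between $H$ and $L_{\psi_0}$.

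\textbf{What the paper does instead.} The paper never chooses a complete type $p$ and never invokes the stabilizer theorem. It writes $H=\bigcap_n H_n$ with $H_n$ symmetric, $H_{n+1}^2\subseteq H_n$, and works with the \emph{definable} relation $\delta_n(x;y)\equiv (xH_n\cap yH_n\notin\mu)$. Definability of $\mu$ makes $\delta_n$ a first-order formula; the S1 property makes $\delta_n$ \emph{stable}. One then uses local stability theory: the space of $\delta_n$-types consistent with $H$ has only finitely many generic types (citing \cite[Lemma 5.16]{HrPi}), so the set $Q_n=\{b\in H_0:\delta_n(x;b)$ lies in every such generic$\}$ is \emph{definable}. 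One checks $H=\bigcap_n Q_n$ and $HQ_n\subseteq Q_n$, whence $G_n:=\{x\in H_n: xQ_n\subseteq Q_n,\ x^{-1}Q_n\subseteq Q_n\}$ is a definable subgroup with $H\subseteq G_n\subseteq H_n$. The point is that stability of $\delta_n$ converts the definability of the ideal directly into a definable group, bypassing both of your problematic steps.
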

\begin{proof}

Write $H=\bigcap_{n<\omega} H_n$, where each $H_n$ is definable,
stable under inverse and $H_{n+1} \cdot H_{n+1} \subseteq H_n$.
Let $\delta_n(x;y)= x\in G \wedge y\in G \wedge xH_n \cap yH_n
\notin \mu$. Then $\delta_n$ is a definable, stable (as $\mu$ is
S1), $G$-invariant relation. Let $S_{\delta_n,H}$ be the set of
global $\delta_n$-types (in variable $x$) which are consistent
with $H$. By stability, all $\delta_n$-types are definable. Recall
that an element of $S_{\delta_n,H}$ is generic if every set in it
covers $G$ in finitely many translates. By Lemma 5.16 in
\cite{HrPi}, there are finitely many generic types in
$S_{\delta_n,H}$.

Let $Q_n$ be the definable set $\{b\in H_0 : \delta_n(x;b)$ is in
all generic types of $S_{\delta_n,H}\}$.

\begin{claim} $H= \bigcap Q_n$.
\end{claim}

\begin{claimproof} Let $a\in H$, then for $b\in H$ realizing a generic type of
$S_{\delta_n,H}$ over $a$, $H\subseteq aH_n \cap bH_n$, hence
$aH_n \cap bH_n\notin \mu$ and $a\in Q_n$.

Conversely, let $a\in \bigcap Q_n$ and take $b\in H$ generic over
$a$ as above. By definition of $Q_n$, we have $aH_n \cap bH_n
\notin \mu$ for all $n$ so that in particular it is non-empty.
Hence by compactness, $aH\cap bH$ is non-empty, so $a\in H$.
\end{claimproof}

\begin{claim} $HQ_n \subseteq Q_n$.
\end{claim}

\begin{claimproof} Let $a\in H$ and $b\in Q_n$. Let $c$ be generic over $a,b$. We
need to show that $abH_n \cap cH_n\notin \mu$. By invariance, this
is equivalent to $bH_n \cap a^{-1}cH_n\notin \mu$. But $a^{-1}c$
realizes a generic over $b$, hence this follows from the fact that
$b\in Q_n$.
\end{claimproof}

Finally, let $G_n = \{x \in H_n : xQ_n \subseteq Q_n \wedge
x^{-1}Q_n \subseteq Q_n\}$. Then $G_n$ is a subgroup and
$H\subseteq G_n \subseteq H_n$, so $H=\bigcap G_n$.
\end{proof}

\subsection{Stabilizer Theorems}

A good insight for invariant and S1 ideals (which prompted some of
the terminology we use) comes from measures. If we have a finitely
additive measure on definable sets which is invariant under
automorphisms fixing some set $A$, a natural $A$-invariant ideal
is that of sets of measure 0. In this context, \emph{wide} sets
are those which have positive measure. This ideal does not need to
be S1 since an infinite union of positive measure sets need not
intersect if the ambient universe has infinite measure. However,
if we restrict ourselves to a finite measure set, then the ideal
of measure 0 sets is in fact S1.

Because of this analogy, given an ideal $\mu$ we will call a definable set $X$ \emph{medium}
if  $\mu$ is S1 when restricted to $X$. Note that medium sets form an ideal.
A type is medium if it concentrates on a medium set. If
$p$ is medium and $a\models p$ with
$\tp(a/Mb)$ wide, then $\tp(a/Mb)$ does not fork over $M$.

\bigskip

Recall that we assume the ideal $\mu$
to be both $A$-invariant and invariant under translations by elements of $G$.
If $q$ and $r$ are wide types, then we define $St(q,r):=\{g : gp
\cap r$ is wide$\}$. If $p$ is wide, we will denote $St(p,p)$ by
$St(p)$ and $St_r(p)= \{g : pg\cap p$ is wide$\}$. Hence $g\in
St(p)$ if and only if there is some $a\models p$, $\tp(a/Mg)$ wide
and $ga\models p$ (then also $\tp(ga/Mg)$ is wide by
$G$-invariance of $\mu$). Observe that $St(p)$ is stable under
inversion. Finally, $Stab(p)$ is the subgroup generated by
$St(p)$.

If $p$ and $q$ are two types, we let $p\nf q=\{(a,b) : a\models p, b\models q, \tp(b/Ma)$ does not fork over $M\}$.

\smallskip
We recall one version of Hrushovski's stabilizer theorem from
\cite{Hru12}.

\begin{fact}[\cite{Hru12}]\label{fact_stab}
Let $\mu$ be an $M$-invariant ideal on $G$ stable under left and right multiplication. Let $X\subseteq G$ be a symmetric $M$-definable set such that $\mu$ is S1 on $X^3$. Let $q$ be a wide type over $M$ concentrating on $X$. Assume

\begin{description}
\item[(F)] There are $a,b \models q$ such that $\tp(a/Mb)$ and $\tp(b/Ma)$ are both non-forking over $M$.
\end{description}
Then there is a wide type-definable subgroup $S$ of $G$. We have $S=(q^{-1}q)^2$ and $qq^{-1}q$ is a coset of $S$. Moreover $S$ is normal in the group generated by $X$ and $S\setminus (q^{-1}q)$ is included in a union of non-wide $M$-definable sets.
\end{fact}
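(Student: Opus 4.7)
The plan is to mimic Hrushovski's original stabilizer argument, using the stability of the relation
\[ R(a,b) \iff a^{-1}q \cap b^{-1}q \text{ is wide}, \]
provided by Fact \ref{HrStable}, as a substitute for stationarity of types in the stable case. The S1 hypothesis on $X^3$ is what ensures that every intermediate computation, involving up to three ``generic'' elements at a time, stays in a region where the ideal is well-behaved and where wideness is preserved by the relevant non-forking extensions (via Fact \ref{wide}).

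First I would extract from hypothesis (F) longer independent sequences of realizations of $q$. By iterating non-forking extensions and invoking Fact \ref{wide} together with the medium condition on $X^3$, one produces $a_1, \dots, a_4 \models q$ that are pairwise non-forking independent over $M$. For any such independent pair $a, b \models q$ the element $a^{-1}b$ lies in $St(q)$, witnessed by $b$ itself, so $q^{-1}q \subseteq St(q)$. The constancy of the stable relation $R$ on non-forking pairs (the last unnumbered fact before Proposition \ref{TdefGps}) will let me exchange which elements are independent across a chain: this is the ``associativity'' input that replaces stationarity in the stable case.

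The heart of the proof is showing that $S := (q^{-1}q)^2$ is closed under multiplication. Given a product $g_1 g_2 \cdot h_1 h_2$ with $g_i, h_i \in q^{-1}q$, I would write each factor as $a^{-1}b$ with suitable realizations of $q$ and use repeated applications of the non-forking calculus, shuffling bases so that all four elements become pairwise independent over $M$, to re-express the product as a new element of $(q^{-1}q)^2$. Closure under inversion is immediate from the symmetry $(a^{-1}b)^{-1} = b^{-1}a$, so $S$ is a group. Its type-definability follows from the definability (through $R$) of the condition ``$gq \cap q$ is wide'' on sufficiently generic $g$, and wideness of $S$ is clear since $q^{-1}q \subseteq S$ and $q^{-1}q$ is already wide.

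To show that $qq^{-1}q$ is a single coset of $S$, fix $a_0 \models q$ and, for $a,b,c \models q$ taken suitably independent from $a_0$, rewrite
\[ a b^{-1} c \;=\; a_0 \cdot \bigl( a_0^{-1} a \bigr)\bigl( b^{-1} c \bigr), \]
where each parenthesized factor lies in $q^{-1}q$, so the whole expression sits in $a_0 S$; the reverse inclusion is produced by picking preimages along the same scheme. Normality of $S$ in $\langle X \rangle$ comes from the observation that any conjugate $xSx^{-1}$ (for $x \in X$) is itself a wide type-definable group whose intersection with $S$ is wide, forcing equality by the uniqueness of the wide group attached to $q$ via the construction. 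Finally, since by definition $g \in q^{-1}q$ iff $gq \cap q$ is wide, the complement $S \setminus (q^{-1}q)$ is cut out by an $M$-invariant family of non-wide formulas, which by compactness yields the desired cover by non-wide $M$-definable sets.

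The main obstacle is the closure of $S$ under multiplication: one must track pairwise non-forking independence through products of four elements while simultaneously ensuring that each application of the constancy of $R$ happens inside $X^3$, so that the S1 hypothesis is genuinely available. Once this bookkeeping is in place, the remaining assertions follow by symmetry, compactness, and the translation-invariance of $\mu$.
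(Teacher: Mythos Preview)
The paper does not prove this statement: Fact~\ref{fact_stab} is cited from \cite{Hru12} and immediately followed by the sentence ``We will not actually use this theorem, but some modified versions of it, which we prove in this section.'' So there is no proof in the paper to compare your proposal against; the paper instead proves the variants Theorem~\ref{th_babystab} and Theorem~\ref{th_stabilizer}, with different hypotheses.

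That said, your sketch has real gaps even as an outline of Hrushovski's argument. The claim that for independent $a,b\models q$ the element $a^{-1}b$ lies in $St(q)$ ``witnessed by $b$ itself'' is not right: membership in $St(q)$ means $a^{-1}bq\cap q$ is \emph{wide}, not merely nonempty, and this requires the S1 argument via an indiscernible sequence (compare Lemma~\ref{repeatedClaim} and Claim~1 in the proof of Theorem~\ref{th_babystab}). Your closure-under-multiplication step is the crux and is only gestured at; in both Hrushovski's proof and the paper's Theorem~\ref{th_babystab} this is done not by directly manipulating four elements but by introducing an auxiliary wide type $q'$ (a product of two generics), proving $pp^{-1}\subseteq Stab(q')$ and $Stab(q')\subseteq St(p)^2$ separately, and concluding $St(p)^2$ is a group because $Stab(q')$ is. Finally, your normality argument invokes a ``uniqueness of the wide group attached to $q$'' that you have not established; the actual argument (Proposition~\ref{prop_normal}) shows that each type $r$ on $X$ has bounded image in $G/S$ via S1, forcing $r$ into a single coset on each side and hence $S^r=S$.
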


We will not actually use this theorem, but some modified versions of it, which we prove in this section. Theorem \ref{th_babystab} below is very close to Fact \ref{fact_stab}. The proof is of course very much inspired, at times literally copied, from that of Hrushovski. One difference is that we assume the ideal to be S1 on up to four products of the type and its inverse (instead of three), and this allows us to simplify slightly the arguments. On the other hand, we weaken the requirements by dropping assumption (F) and under assumption (B1), we forgo right-invariance.

The proof in \cite{Hru12} operates by acting on the right on
$q$, we decide to act on the left, which explains some differences
in the statements.

\medskip

We will need a stronger version of Fact \ref{HrStable}, where we restrict the requirement that $\mu$ has the S1 property in all sets.

\begin{lemme}
Let $p, q$ be medium, then the relation $R(g,h)$ defined as ``$gp\cap hq$ is wide" is a stable relation.
\end{lemme}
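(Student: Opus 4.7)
The plan is to argue by contradiction, following Hrushovski's proof of Fact~\ref{HrStable} but invoking the S1 property only on left translates of fixed medium sets $X_p \ni p$ and $X_q \ni q$. Assume $R$ is unstable and extract an $M$-indiscernible sequence $(g_i,h_i)_{i<\omega}$ arranged so that $R(g_i,h_j)$ iff $i<j$. From $\neg R(g_1,h_0)$ pick $\phi \in p$ and $\psi \in q$ (WLOG $\phi \to X_p$ and $\psi \to X_q$) with $(g_1 \phi)(x) \wedge (h_0 \psi)(x) \in \mu$, and by a Ramsey refinement of the $\mu$-coloring of pairs $(i,j)$, arrange the uniform pattern $(g_i \phi)(x) \wedge (h_j \psi)(x) \in \mu$ whenever $i > j$, and $\notin \mu$ whenever $i < j$ (the latter from wideness of $g_i p \cap h_j q$).

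Setting $\chi(x; g, h) := (g\phi)(x) \wedge (h\psi)(x)$ and $a_n := (g_n, h_{n+1})$, one has $\chi(x; a_n) \notin \mu$ for each $n$ (as $n \not> n+1$), whereas for $n < m$ with $m \ge n+2$ the conjunction $\chi(x; a_n) \wedge \chi(x; a_m)$ is contained in $\chi(x; g_m, h_{n+1})$, which lies in $\mu$ because $m > n+1$, and so lies in $\mu$ itself. In the original setting of Fact~\ref{HrStable}, a direct S1 application to the $M$-indiscernible sub-sequence $(a_{2n})_n$ (spacing indices to ensure the gap condition) and the formula $\chi$ would then force $\chi(x; a_{2n}) \in \mu$, contradicting the non-$\mu$ observation and concluding the argument.

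The technical heart and main obstacle is that $\chi(x; a_n) \subseteq g_n X_p$, a medium set that varies with $n$, so S1 on the fixed $M$-definable set $X_p$ does not apply immediately. I address this via left-invariance of $\mu$ (so each translate $g_n X_p$ is medium) and the fact that medium sets form an ideal (so finite unions of medium sets are medium), combined with the translation trick $\tilde\chi(y; g, h) := \chi(gy; g, h) = \phi(y) \wedge \psi(h^{-1} g y) \subseteq X_p$, which lies in the fixed medium set $X_p$ for every $(g, h)$ and satisfies $\tilde\chi(y; g, h) \in \mu \iff \chi(x; g, h) \in \mu$ by left-invariance. Carefully tracing how the $\mu$-membership of the conjunction $\chi(x; a_n) \wedge \chi(x; a_m)$ transports under these translations, one can reduce the S1 application to the fixed medium set $X_p$, concluding $\tilde\chi(y; a_n) \in \mu$ and hence $\chi(x; a_n) \in \mu$, the sought contradiction.
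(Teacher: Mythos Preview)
Your translation trick does not close the argument, and the gap is precisely at the point you flag as needing ``careful tracing''. The substitution $y=g^{-1}x$ depends on the parameter, so while left-invariance gives $\tilde\chi(y;a_n)\in\mu\iff\chi(x;a_n)\in\mu$ for each single instance, it does \emph{not} give $\tilde\chi(y;a_n)\wedge\tilde\chi(y;a_m)\in\mu\iff\chi(x;a_n)\wedge\chi(x;a_m)\in\mu$. Concretely,
\[
\tilde\chi(y;a_n)\wedge\tilde\chi(y;a_m)=\phi(y)\wedge\psi(h_{n+1}^{-1}g_n y)\wedge\psi(h_{m+1}^{-1}g_m y),
\]
whereas the $g_n^{-1}$-translate of $\chi(x;a_n)\wedge\chi(x;a_m)$ is
\[
\phi(y)\wedge\psi(h_{n+1}^{-1}g_n y)\wedge\phi(g_m^{-1}g_n y)\wedge\psi(h_{m+1}^{-1}g_n y).
\]
These are different formulas. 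The reason the original conjunction lies in $\mu$ is the cross term $(g_m\phi)(x)\wedge(h_{n+1}\psi)(x)$, which mixes a $\phi$-translate from $a_m$ with a $\psi$-translate from $a_n$; after your change of variable this cross term disappears from $\tilde\chi(y;a_n)\wedge\tilde\chi(y;a_m)$, and there is no apparent reason for the latter to lie in $\mu$. So the S1 hypothesis inside $X_p$ is simply not available for $\tilde\chi$.

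The paper avoids this by never trying to translate uniformly. Instead it makes a case distinction on whether $g_0p\cap g_1p\cap h_2q$ is wide. In one case it works inside the \emph{single} fixed medium set $g_0p$ (using $g_0$ from the sequence as a base point and running S1 on the tail $(g_{2i},h_{2i+1})_{i\geq 1}$, which is indiscernible over $g_0$); in the other case it works inside the single fixed medium set $h_2q$. Fixing one element of the sequence as the ambient medium set is exactly what sidesteps the varying-translate problem you ran into.
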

\begin{proof}
Note that by invariance of $\mu$, every translate of $p$ and $q$ is medium. Let $(g_i h_i :i \in \mathbb Z)$ be an indiscernible sequence and assume that $R(g_i,h_j)$ holds if and only if $i\leq j$.

\medskip \noindent
\underline{Case 1}: $g_0 p \cap g_1 p \cap h_2 q$ is wide.

\smallskip
We then have that for all $i>0$, $g_0 p \cap g_i p \cap h_{i+1} q$ is wide by indiscernibility. Also for $i<j$, we have $(g_i p \cap h_{i+1}q) \cap( g_{i+2} p \cap h_{i+3}q)$ is not wide as already $h_{i+1}q \cap g_{i+2}p$ is not wide. Therefore the sequence $(g_0 p \cap (g_{2i}p \cap h_{2i+1}q):i>1)$ contradicts the S1 property inside $g_0 p$.

\medskip \noindent
\underline{Case 2}: $g_0 p \cap g_1 p \cap h_2 q$ is not wide.

\smallskip
We know that for all $i<2$, $g_i p \cap h_2 q$ is wide. Hence the sequence $(h_2 q \cap g_i p : i<2)$ contradicts the S1 property inside $h_2 q$.
\end{proof}

\begin{lemme}\label{lem0}
Let $q,r$ be medium and wide, and let $p\in St(q,r)$. Take $(a,b)\in p\nf p$, then $a^{-1}b, b^{-1}a \in St(q)$.
\end{lemme}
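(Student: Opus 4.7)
The key observation is that, by the previous lemma applied with both slots equal to the medium type $q$, the relation $R(g,h)$ defined by ``$gq \cap hq$ is wide'' is a stable $M$-invariant relation on $p \times p$. Fact 2.3 then gives that $R(a,b)$ has constant truth value on pairs $(a,b) \in p \nf p$. The whole problem thus reduces to exhibiting one non-forking pair in $p \times p$ on which $R$ is true.

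My candidate is $(a_0, a_1)$, the first two terms of an $M$-indiscernible Morley sequence of $p$ over $M$; in particular $(a_0, a_1) \in p \nf p$. By $p \in St(q,r)$, both $a_0 q \cap r$ and $a_1 q \cap r$ are wide partial types. I claim $a_0 q \cap a_1 q \cap r$ is still wide --- which yields $R(a_0,a_1)$. This is an S1 argument on the medium set enclosing $r$: if $a_0 q \cap a_1 q \cap r$ were non-wide, a standard compactness reduction would produce formulas $\chi(y^{-1}x) \in q$ and $\phi_3(x) \in r$, with $\phi_3$ chosen inside the medium definable set around $r$, such that setting $\psi(x,y) := \chi(y^{-1}x) \wedge \phi_3(x)$ one would have $\psi(x,a_0) \wedge \psi(x,a_1) \in \mu$. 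But each $\psi(x,a_i)$ is wide (it contains $a_i q \cap r$) and the sequence $(a_i)$ is $M$-indiscernible, directly contradicting the S1 property.

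Once $R(a_0,a_1)$ is established, Fact 2.3 propagates it to every $(a,b) \in p \nf p$. The conclusion $a^{-1}b, b^{-1}a \in St(q)$ is then immediate from left-invariance of $\mu$: $aq \cap bq$ being wide is equivalent to each of $(a^{-1}b) q \cap q$ and $(b^{-1}a) q \cap q$ being wide.

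The main obstacle I expect is the S1 step --- packaging the partial-type statement ``$a_0 q \cap a_1 q \cap r$ is non-wide'' into a single formula on the medium set containing $r$, so that the formula-level S1 property applies. The rest is bookkeeping, using only the previous lemma and Fact 2.3.
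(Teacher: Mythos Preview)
Your proof is correct and follows essentially the same route as the paper's: reduce via stability (the previous lemma together with the fact on stable relations) to exhibiting a single non-forking pair, take an $M$-indiscernible sequence in $p$, and use the S1 property on the medium set containing $r$ to conclude that $a_0 q \cap a_1 q \cap r$ is wide. The paper is simply terser about the S1 step, writing ``as $r$ is medium, it follows that $a_0 q \cap a_1 q \cap r$ is wide'' without spelling out the compactness reduction you describe; your unpacking of that step is accurate and poses no real obstacle.
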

\begin{proof}
Take $(a,b)\in p\nf p$. Since $St(q)$ is stable under inverses, it suffices to show that $a^{-1}b q\cap q$ is wide, which is equivalent to $b q \cap a q$ is wide. As $q$ is medium, by stability it is enough to prove this for one pair $(a,b)\in p\nf p$. Take $(a_i:i<\omega)$ an indiscernible sequence in $p$ such that $\tp(a_1/Ma_0)$ is non-forking over $M$. Then $a_i q \cap r$ is wide for all $i$, as $p\in St(q,r)$. As $r$ is medium, it follows that $a_0 q \cap a_1 q \cap r$ is wide. In particular $a_0 q \cap a_1 q$ is wide, as required.
\end{proof}

\begin{lemme}\label{lem1}
Let $p$ be wide and medium, and let $q\in St(p)$, take $(a,b)\models q\nf q$, then $a^{-1}b, b^{-1}a \in St(p)$. If $\mu$ is right invariant and if $q\in St_r(p)$, then $ab^{-1}, ba^{-1}\in St_r(p)$.
\end{lemme}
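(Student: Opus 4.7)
The first assertion is an immediate specialization of Lemma \ref{lem0}: apply that lemma with both of its types $q$ and $r$ set to our $p$, and its type $p$ set to our $q$. The hypotheses match perfectly (our ``$p$ wide and medium'' is Lemma \ref{lem0}'s ``$q,r$ medium and wide'', and our ``$q \in St(p)$'' is Lemma \ref{lem0}'s ``$p \in St(q,r)$'' since $St(p,p) = St(p)$), and the conclusion ``$a^{-1}b, b^{-1}a \in St(q)$'' in the notation of Lemma \ref{lem0} is exactly ``$a^{-1}b, b^{-1}a \in St(p)$'' in ours.

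For the second assertion, the cleanest route is the left/right symmetry furnished by the right invariance hypothesis. Consider the opposite group $G^{\mathrm{op}}$, with multiplication $g \cdot_{\mathrm{op}} h := hg$. It has the same underlying set, the same definable subsets, the same inverses, and the same $M$-invariant ideal $\mu$ as $G$; under right invariance of $\mu$ on $G$, the ideal $\mu$ is left invariant on $G^{\mathrm{op}}$. The S1 property, wideness, mediumness, and non-forking all depend only on the underlying definable structure and not on the labelling of the group operation, so each of these notions transfers unchanged. Computing in $G^{\mathrm{op}}$, $St(p) = \{g : g \cdot_{\mathrm{op}} p \cap p \text{ wide}\} = \{g : pg \cap p \text{ wide}\} = St_r(p)$ in $G$, so our hypothesis $q \in St_r(p)$ becomes $q \in St(p)$ in $G^{\mathrm{op}}$. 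Applying the first assertion in $G^{\mathrm{op}}$ to a pair $(a,b) \models q \nf q$ gives $a^{-1} \cdot_{\mathrm{op}} b = ba^{-1} \in St(p)$ in $G^{\mathrm{op}}$, i.e.\ $ba^{-1} \in St_r(p)$ in $G$, and symmetrically $ab^{-1} \in St_r(p)$.

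If one prefers a direct argument parallel to the proof of Lemma \ref{lem0}, the plan is: first verify that under right invariance the relation $R(a,b) := ``pa \cap pb \text{ wide}''$ is stable on $q(x) \times q(y)$, by running the same two-case analysis as in the stability Lemme preceding Lemma \ref{lem0} with right translates in place of left translates (right invariance is precisely what ensures that right translates of the medium set carrying $p$ remain medium); then take an indiscernible sequence $(a_i)_{i<\omega}$ in $q$ with $\tp(a_1/Ma_0)$ non-forking, observe that $pa_i \cap p$ is wide because $a_i \in St_r(p)$, and apply S1 inside the medium set containing $p$ to conclude that $pa_0 \cap pa_1 \cap p$ is wide, hence $R(a_0,a_1)$ holds. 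The Fact on stable $M$-invariant relations and non-forking then propagates $R$ to every $(a,b) \models q \nf q$, and a right translation by $b$ (using right invariance again) converts the wideness of $pa \cap pb$ into that of $p(ab^{-1}) \cap p$. There is no genuine obstacle here: the second assertion is just the first viewed through the left/right mirror, and the work has already been done in Lemma \ref{lem0} and the preceding stability lemma.
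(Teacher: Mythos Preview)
Your proof is correct and matches the paper's own argument. The paper derives the first assertion exactly as you do, by specializing Lemma \ref{lem0} with $q=r=p$, and for the second assertion simply says it ``is proved in the same way by multiplying on the right''; your opposite-group formulation and your direct parallel argument are just two explicit ways of unpacking that remark.
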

\begin{proof}
The first part follows from the previous lemma by taking $q,r$ there to be $p$ here. The second part of the statement is proved in the same way by multiplying on the right.
\end{proof}

We will also show the following.

\begin{lemme}\label{repeatedClaim}
Let $p$ and $r$ be medium types with $r$ wide, let
$(a,b)\models p \times_{nf} p$, and assume that $p^{-1}r$ is
medium. Then $ba^{-1}\in St(r)$.
\end{lemme}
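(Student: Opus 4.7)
The plan is to reduce the conclusion, via left-invariance of $\mu$, to the wideness of $a^{-1}r \cap b^{-1}r$, recognize this as the value of a stable $M$-invariant relation on $p \nf p$, and verify it on a Morley pair using the S1 property of a definable set witnessing that $p^{-1}r$ is medium.

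First, by left-invariance of $\mu$, translating on the left by $b^{-1}$ shows that $ba^{-1}r \cap r$ is wide if and only if $a^{-1}r \cap b^{-1}r$ is wide. The Lemma immediately preceding Lemma \ref{lem0}, applied with both medium types equal to $r$, tells us that the relation $R(g,h) := $ ``$gr \cap hr$ is wide'' is $M$-invariant and stable; composing with the definable inversion $(a,b)\mapsto (a^{-1},b^{-1})$, so is $T(a,b) := R(a^{-1},b^{-1}) = $ ``$a^{-1}r \cap b^{-1}r$ is wide''. By the constancy fact quoted just before Fact \ref{fact_stab}, the truth value of $T(a,b)$ is therefore constant on pairs $(a,b) \models p \nf p$. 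It is enough to exhibit one such pair on which $T$ holds.

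Take an $M$-indiscernible Morley sequence $(a_i)_{i<\omega}$ in $p$; then $(a_0,a_1) \models p \nf p$. Assume for contradiction that $a_0^{-1}r \cap a_1^{-1}r$ is not wide. By compactness applied to this partial type and closure of $r$ under conjunction, one extracts a single formula $\psi \in r$ over $M$ such that $a_0^{-1}\psi \cap a_1^{-1}\psi \in \mu$; by indiscernibility the same holds for every pair $i<j$. Let $Y \supseteq p^{-1}r$ be a definable $M$-set on which $\mu$ is S1 (witnessing the medium-ness of $p^{-1}r$), and set $\phi_i(x) := Y(x) \wedge \psi(a_i\cdot x)$. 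Since $a_i \models p$ and $\psi \in r$, the partial type $a_i^{-1}r$ is contained in $p^{-1}r \subseteq Y$ and in $a_i^{-1}\psi$, hence $\phi_i \supseteq a_i^{-1}r$; as $r$ is wide and $\mu$ is left-invariant, $a_i^{-1}r$ is wide, so each $\phi_i$ is wide. On the other hand, $\phi_i \wedge \phi_j \subseteq a_i^{-1}\psi \cap a_j^{-1}\psi \in \mu$, so the pairwise intersections are in $\mu$. Since each $\phi_i$ is contained in $Y$, this is exactly an instance violating the S1 property of $\mu$ on $Y$. Hence $a_0^{-1}r \cap a_1^{-1}r$ is wide, so $T(a_0,a_1)$ holds; by the constancy step, $T(a,b)$ holds, giving $ba^{-1} \in St(r)$.

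The main obstacle I foresee is the compactness extraction of $\psi$: one must pass from non-wideness of the partial type $a_0^{-1}r \cap a_1^{-1}r$ to a single formula in $\mu$ whose $a_i$-translates lie inside the medium witness $Y$. This is where the hypothesis that $p^{-1}r$ is medium (rather than merely wide) becomes essential, since S1 is a property of formulas and can only be invoked after everything has been compressed to a single definable set sitting inside $Y$. Once that is in place, the stability of ``$gr \cap hr$ is wide'' plus the constancy fact transfers the contradiction from the Morley pair $(a_0,a_1)$ uniformly to our $(a,b)$, exactly in the spirit of the proof of Lemma \ref{lem0}.
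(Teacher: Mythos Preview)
Your proof is correct and follows essentially the same route as the paper: reduce via left-invariance to wideness of $a^{-1}r\cap b^{-1}r$, use stability of the relation ``$gr\cap hr$ is wide'' to transfer the question to a Morley pair $(a_0,a_1)$ in $p$, and then apply S1 inside the medium witness for $p^{-1}r$. The only difference is cosmetic: the paper invokes S1 directly on the partial types $a_i^{-1}r$ (all wide, all contained in the medium $p^{-1}r$), whereas you spell out the compactness step extracting a single $\psi\in r$ and work with the formulas $\phi_i(x)=Y(x)\wedge\psi(a_i x)$; this is exactly the unpacking of what ``S1 on a partial type'' means.
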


\begin{proof}
We need to show that $a^{-1} r \cap b^{-1} r$ is wide. Let
$(a_i)_{i<\omega}$ be an indiscernible sequence of realizations of
$p$ such that $\tp(a_n/Ma_{<n})$ is non forking for all $n$. By
stability, it is enough to show that $a_0^{-1} r \cap a_1^{-1} r$
is wide. The type-definable sets $(a_i^{-1} r)_{i<\omega}$ are
wide and included in $p^{-1}r$ which is medium by hypothesis, so
by the S1 property $a_0^{-1} r\cap a_1^{-1} r$ is wide as
required.
\end{proof}

\begin{thm}\label{th_babystab}
Let $\mu$ be an $M$-invariant ideal on $G$ stable under left multiplication. Let $p\in S_G(M)$ be wide. Assume either (B1) or (B2), where:
\begin{description}
\item[(B1)] For some symmetric definable set $X\in p$, $\mu$ is S1 on $X^4$;

\item[(B2)] $\mu$ is S1 on $(pp^{-1})^2$ and invariant under (left and) right multiplication.


\end{description}
Then $Stab(p)=St(p)^2 = (pp^{-1})^2$ is a connected, wide type-definable group on which $\mu$ is S1. Furthermore $Stab(p)\setminus St(p)$ is included in a union of non-wide $M$-definable sets.
\end{thm}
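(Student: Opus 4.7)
The plan is to adapt Hrushovski's proof of Fact \ref{fact_stab}, using the enlarged S1 hypothesis on $X^4$ (respectively on $(pp^{-1})^2$ under (B2)) to replace assumption (F).

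First, under either (B1) or (B2), I check that $p, p^{-1}, pp^{-1}, p^{-1}p, (pp^{-1})^2$ all concentrate on medium sets. Under (B1), they all lie in $X^4$ by symmetry of $X$; under (B2), the S1 hypothesis on $(pp^{-1})^2$ together with right-invariance of $\mu$ handles the rest. With this, Lemma \ref{repeatedClaim} applied with $r = p$ yields: for $(a,b)\models p\nf p$, $ba^{-1}\in St(p)$. Hence a $\nf$-generic element of $pp^{-1}$ lies in $St(p)$, so $St(p)$ is wide, contains $e$, and is stable under inverse (since $gp\cap p$ is wide iff $g^{-1}p\cap p$ is wide, by left-translating and using $G$-invariance of $\mu$).

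Next, I show that $S:=(pp^{-1})^2 = St(p)^2 = Stab(p)$ is a subgroup. Since any wide set is non-empty, $St(p)\subseteq pp^{-1}$, giving $St(p)^2\subseteq S$. The core computation is the following rearrangement: given any element of $S$ (or more generally any word in $St(p)^{\le 4}$), I insert a sufficiently $\nf$-independent realisation of $p$ so as to rewrite it in the form $c'\cdot c''$ with each factor a ``generic product'' arising from Lemma \ref{repeatedClaim}; using stability of the relation ``$gp\cap hp$ is wide'' (the lemma before Lemma \ref{lem0}), together with Lemmas \ref{lem0} and \ref{lem1}, each factor then lies in $St(p)$. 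This simultaneously yields $S\subseteq St(p)^2$ and closure of $St(p)^2$ under multiplication. Closure under inverse is immediate, so $Stab(p)=St(p)^2=S$.

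Finally, type-definability of $S$ follows from that of $St(p)$ (the conditions $\phi(x)\wedge\psi(g^{-1}x)\notin\mu$ are type-definable in $g$ under the standing hypotheses) via the identity $S = St(p)^2$. Connectedness follows because, by the stable-relation lemma, the $\nf$-generic of $pp^{-1}$ is the unique wide type over $M$ inside $S$, leaving no room for a proper relatively type-definable subgroup of finite index. S1-ness of $\mu$ on $S$ is inherited from the hypothesis on $X^4$ (respectively on $(pp^{-1})^2$). For the last clause, if $g\in S\setminus St(p)$ then some $\phi,\psi\in p$ witness $\phi(x)\wedge\psi(g^{-1}x)\in\mu$; by $M$-invariance, the locus of such $g$ is $M$-definable and non-wide (else an $M$-generic point of this locus would contradict wideness of the corresponding intersection), and covering $S\setminus St(p)$ by these as $\phi,\psi$ vary over $p$ gives the required union. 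The main obstacle will be the rearrangement step: without (F), non-forking cannot be symmetrized, and one must orchestrate the independent generic insertion so that the four-fold intersections of translates of $p$ arising in the argument all live inside the enlarged medium set promised by (B1) or (B2), which is precisely why we need S1-ness on $X^4$ rather than $X^3$.
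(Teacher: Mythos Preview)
Your ``rearrangement'' step is where the argument breaks. Inserting an independent $c\models p$ shows that $pp^{-1}\subseteq St(p)^2$ (write $ab^{-1}=(ac^{-1})(cb^{-1})$ and apply Lemma~\ref{repeatedClaim} twice), and together with $St(p)\subseteq pp^{-1}$ this gives $(pp^{-1})^2\subseteq St(p)^4$. But this does \emph{not} show that $St(p)^2$ is closed under products, i.e.\ that $St(p)^4\subseteq St(p)^2$. Your appeal to Lemmas~\ref{lem0}--\ref{lem1} and stability is not enough: those lemmas let you pass from a pair in $q\nf q$ (with $q\subseteq St(p)$) to a single element of $St(p)$, but they give no way to collapse a length-four word in $St(p)$ down to length two. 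The paper closes this gap by introducing the auxiliary type $q$ (a generic of $p^{-1}p$ or $pp^{-1}$ depending on (B1)/(B2)), proving inductively that $Stab(q)\cdot q\subseteq St(p)$ generically (Claim~3), and then using that $Stab(q)$ is \emph{a priori} a group containing $pp^{-1}$ (Claim~4 via Claim~2) to conclude $(pp^{-1})^2\subseteq Stab(q)\subseteq St(p)^2$. Without such an intermediate group there is no mechanism to absorb arbitrary products.

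Two smaller problems. First, your type-definability argument is wrong: the condition ``$\phi(x)\wedge\psi(g^{-1}x)\notin\mu$'' is $M$-invariant but not type-definable in $g$, since ideal membership is not assumed definable; the correct route is simply that $(pp^{-1})^2$ is type-definable as a product of complete types. Second, your argument for the last clause fails for the same reason: the locus $\{g:\phi(x)\wedge\psi(g^{-1}x)\in\mu\}$ is $M$-invariant but need not be $M$-definable, and your claim that it is non-wide is unjustified. The paper instead shows directly that every wide type $s\subseteq Stab(p)$ lies in $St(p)$, again via Claim~3 and the auxiliary $q$: pick $c\models q$, $b\models s$ with $\tp(b/Mc)$ wide, observe $cb\in Stab(q)$, and conclude $bp\cap p$ is wide.
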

\begin{proof}
Note that under either of (B1) or (B2), we have that both $p$ and $p^{-1}p$ are medium.

The proof will proceed by a series of steps. Only in the beginning will there be differences depending on whether (B1) or (B2) is assumed.

\smallskip \noindent
\begin{claim1}
Let $(a,b)\in p \nf p$, then $ba^{-1}\in St(p)$.
\end{claim1}
\begin{claimproof}
 This follows immediately from Lemma \ref{repeatedClaim}.\end{claimproof}

\begin{claim1'}
If (B1) holds, then for any $(a,b)\in p \nf p$ we have $a^{-1}b\in St(p)$.
\end{claim1'}
\begin{claimproof}
By symmetry of $X$, we have that $p^2$ is medium, so the result
follows from Lemma \ref{repeatedClaim} with $p=p^{-1}$ and
$r=p$.\end{claimproof}

\medskip

Take now $(a,b)\in p\nf p$, $\tp(b/Ma)$ wide. We define $q=\tp(a^{-1}b/M)$ under assumption (B1) and $q=\tp(ba^{-1}/M)$ under assumption (B2). Then in both cases $q\in St(p)$, $q$ is wide (using right-invariance in the (B2) case) and medium. Notice that under either assumption $p^{-1}q$ is medium: under (B1) $p^{-1}q\subseteq X^3$ and under (B2) $p^{-1}q \subseteq p^{-1}pp^{-1}$.

So Lemma \ref{repeatedClaim} implies

\begin{claim2}
Let $(a,b)\in p \nf p$, then $ba^{-1}\in St(q)$.
\end{claim2}

\begin{claim3}
 Let $(b,c)\in Stab(q)\nf q$, then $bc\in
St(p)$. So in particular $Stab(q)\subseteq St(p)^2 \subseteq
(pp^{-1})^2$.
\end{claim3}

\begin{claimproof}
 As $St(q)$ is stable under inverse, we can write $b=b_1\cdots b_n$, with each $b_i \in St(q)$. We show the result by induction on $n$. For $n=0$, it follows from the fact that $q\in St(p)$.

Assume we know it for $n-1$ and take $b=b_1\cdots b_n$. We have to show that $b_n^{-1}\cdots b_1^{-1} p \cap cp$ is wide. As $b_n \in St(q)$, there is $c'\models q$, $\tp(c'/Mb_n)$ wide such that $b_n c' \models q$. We may also assume that $\tp(c'/Mb_0\ldots b_n)$ is wide. Then by translation invariance, $\tp(b_nc'/Mb_0\ldots b_n)$ is wide. By induction, $b_{n-1}^{-1}\cdots b_1^{-1} p \cap b_nc' p$ is wide, then so is $b_n^{-1}\cdots b_1^{-1}p \cap c'p$ and we conclude by stability.
\end{claimproof}

\begin{claim4}
Let $a,b\models p$, then $ab^{-1}\in St(q)^2$.

\end{claim4}
\begin{claimproof}
Take $c\models p$ such that $\tp(c/Mab)$ is non-forking over $M$. Write $ab^{-1} = (ac^{-1})(cb^{-1})$. By Claim 2 and the fact that $St(q)$ is closed under inverses, both $ac^{-1}$ and $cb^{-1}$ are in $St(q)$ and the claim follows.
\end{claimproof}

\begin{claim5}
$Stab(p)=Stab(q)=(pp^{-1})^2$ is wide and medium.
 \end{claim5}

 \begin{claimproof}
  By Claim 3, we have $Stab(q)\subseteq St(p)^2 \subseteq (pp^{-1})^2$. By Claim 4, $pp^{-1} \subseteq Stab(q)$ so also $(pp^{-1})^2 \subseteq Stab(q)$, hence $(pp^{-1})^2 = St(p)^2=Stab(q)$. Finally, since $Stab(q)$ is a subgroup, we have $Stab(p)=St(p)^2=Stab(q)$. By hypothesis $(pp^{-1})^2$ is medium, and it is wide since it contains $q$.
 \end{claimproof}

All that is left to prove is that $Stab(p)$ has no type-definable over $M$ proper subgroup of bounded index, and that any wide type in $Stab(p)$ lies in $St(p)$.

Let $T\leq Stab(p)$ be a type-definable over $M$ subgroup of bounded index. We have $pp^{-1}\subseteq Stab(p)$, hence for $a\models p$, $p\subseteq Stab(p)a$. So $p$ lies in a right coset $S_p$ of $Stab(p)$. This coset is $M$-invariant and hence type-definable over $M$. All  right cosets of $T$ in $S_p$ are type-definable over $M$ and as $p$ is a complete type over $M$, it must lie entirely within one of them. Therefore $pp^{-1}\subseteq T$ and $T=Stab(p)$.

Now, let $s$ be a wide type in $Stab(p)=Stab(q)$. For any $b\in
Stab(q)$ we have $b^{-1}\in Stab(q)$. By Claim 3, if $c\models q$
is such that $\tp(c/Mb^{-1})$ wide we have that $b^{-1}cp\cap p$
is wide, so by invariance $cp\cap bp$ is also wide. By stability,
the same holds assuming instead that $\tp(b/Mc)$ is wide. Let
$c\models q$ and $b\models s$ such that $\tp(b/cM)$ is wide. Then
by left invariance, $\tp(cb/cM)$ is wide. But we also have $cb \in
Stab(q)$, hence $cbp\cap cp$ is wide. From which it follows that
$bp \cap p$ is wide, so $s$ lies in $St(p)$, as
required.\end{proof}

The proofs of the following two propositions are taken essentially without change from \cite{Hru12}.

\begin{prop}\label{prop_normal}
Let $\mu$ be an $M$-invariant ideal on $G$ stable under left and
right multiplication. Let $p\in S_G(M)$ be wide. Assume also (B1).

Then $Stab(p)$ is normal and of bounded index in the group
generated by $X$ and $X^n$ is medium for all $n$.
\end{prop}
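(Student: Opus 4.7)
The strategy leverages the newly assumed right-invariance of $\mu$ in this proposition to establish each of the three additional conclusions. First, I would exploit symmetry: a right-action version of Theorem \ref{th_babystab} gives that $Stab_r(p) := St_r(p)^2 = (p^{-1}p)^2$ is also a wide, medium, connected type-definable subgroup. Combining this with Claim 1' (which implies $p^{-1}p \subseteq Stab(p)$) and its symmetric analogue (which gives $pp^{-1} \subseteq Stab_r(p)$), each of $Stab(p)$ and $Stab_r(p)$ contains the generators of the other, so the two subgroups coincide: $Stab(p) = Stab_r(p) = \langle p^{-1}p\rangle$.

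For normality, the key step is to show $p^{-1}p \subseteq Stab(gp)$ for every $g \in X$. I would apply Lemma \ref{repeatedClaim} with $\alpha = (gp)^{-1}$ and $\beta = gp$ (playing the roles of the $p$ and $r$ in the lemma statement): the hypotheses hold because $\alpha^{-1}\beta = gp\cdot gp \subseteq X^4$ is medium by (B1), $\alpha \subseteq X^2$ is medium, and $\beta = gp$ is wide and medium. The lemma yields $d^{-1}c \in St(gp)$ for every non-forking pair $(c,d) \in p \nf p$, and an arbitrary $c^{-1}d \in p^{-1}p$ decomposes as $(c^{-1}e)(e^{-1}d)$ for some $e \models p$ independent of $c,d$, placing it in $St(gp)^2 \subseteq Stab(gp)$. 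Hence $Stab(p) = \langle p^{-1}p\rangle \subseteq Stab(gp) = g\,Stab(p)\,g^{-1}$, i.e., $g^{-1}$ normalizes $Stab(p)$; symmetry of $X$ gives the reverse inclusion, so every $g \in X$ normalizes $Stab(p)$, and $Stab(p)$ is normal in $\langle X \rangle$. Bounded index is then automatic, as $Stab(p) \subseteq X^4 \subseteq \langle X \rangle$ is type-definable over the small set $M$, and any such subgroup of $G$ has bounded index.

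For the mediumness of $X^n$, the plan is to show that $X^n$ is covered by finitely many cosets of $Stab(p)$: then $X^n$ is a finite union of translates of the medium set $Stab(p)$, each medium by invariance of $\mu$, and a finite union of medium sets satisfies S1 by a direct check on indiscernible sequences. The finite-coset covering follows from normality and the bounded index via an indiscernibility/compactness argument: an $M$-indiscernible sequence of pairwise coset-distinct elements in $X^n$ would, via the quotient group structure and the stability of the coset relation inherited from Fact \ref{HrStable}, force an element into both a trivial and a non-trivial coset of $Stab(p)$, a contradiction. The main obstacle is this finite-coset step, which requires more than just bounded index and is the most delicate part of the argument; the normality proof itself is comparatively transparent once one notices that all the relevant products involving $gp$ remain within $X^4$ when $g \in X$, so the S1 hypothesis on $X^4$ already suffices.
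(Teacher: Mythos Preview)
Your argument has two substantive gaps. The most serious is the claim that bounded index is ``automatic'' because $Stab(p)$ is type-definable over the small set $M$: this is false in general (the trivial subgroup is type-definable over $\emptyset$ in any infinite group). Wideness does not rescue it here; Lemma~\ref{lem_bddindex} proves such a statement much later, but only for the specific f-generic ideal, using properties not available in this abstract setting. In the paper, bounded index is not a separate afterthought but is obtained \emph{simultaneously} with normality: one first shows that each complete $M$-type $r\subseteq X$ lies in a single left coset of $S:=Stab(p)$, by noting that an $M$-indiscernible sequence $(a_i)$ in $r$ with pairwise distinct cosets $a_iS$ would yield pairwise disjoint wide sets $a_i\,pp^{-1}\subseteq X^3$, contradicting S1 there. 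This immediately bounds $|X/S|$ by $|S_X(M)|$, and once normality is in hand, $|\langle X\rangle/S|$ is bounded as well.

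Your normality step also has a gap: Lemma~\ref{repeatedClaim} is stated for complete types over $M$, and for $g\in X\setminus M$ the set $gp$ is not a complete type over $M$ (nor even over $Mg$). Concretely, the stable relation you need, ``$c\cdot gp\cap d\cdot gp$ is wide'', is $Mg$-invariant but not $M$-invariant, so the constancy-on-$p\nf p$ fact does not apply over $M$, while over $Mg$ the type $p$ is no longer complete. The paper avoids this entirely by never leaving the category of $M$-types: having shown each $r\subseteq X$ sits in one coset, the conjugate $S^r:=cSc^{-1}$ is independent of $c\models r$ and type-definable over $M$; one then reruns the S1 argument inside $X^4$ to get $pp^{-1}\subseteq S^r$, hence $S\le S^r$, and symmetry via $r^{-1}$ gives $S=S^r$. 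Finally, your covering of $X^n$ by \emph{finitely} many cosets is not what one obtains---the index is bounded, not finite---so ``medium sets form an ideal'' does not directly apply and that step too needs more than your sketch provides.
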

\begin{proof}
Write $S=Stab(p)$. Let $r$ be a type over $M$ of elements of $X$. Then the image of $r$ in $G/S$ is bounded. Indeed, assume not, then we can find an indiscernible sequence $(a_i:i<\omega)$ of realizations of $r$ such that the cosets $a_i S$ are pairwise disjoint. Hence so are the types $a_i pp^{-1}$ (as $pp^{-1}\subseteq S$), but this contradicts S1 inside $X^3$. As $r$ is a complete type over $M$ it must be included in one left coset of $S$. Applying the same reasoning to $r^{-1}$, we see that $r$ is also included in a unique right coset of $S$. Thus $X/S$ is bounded and if $c,c'\models r$, then $c S c^{-1} = c' S c'^{-1} =: S^r$ is type-definable over $M$.

We now claim that $p^{-1}$ has bounded image in $G/S^r$: for if not, we would have an $Mc$-indiscernible sequence $(a_i:i<\omega)$ of realizations of $p$ with $a_i^{-1}c Sc^{-1}$ pairwise disjoint and again $a_i^{-1}cpp^{-1}$ would be pairwise disjoint contradicting S1 in $X^4$. Hence $p^{-1}$ lies entirely within one left coset of $S^r$ and $pp^{-1} \subseteq S^r$. Therefore $S\leq S^r$. We also have $S\leq S^{r^{-1}}$ and then $S=S^r$.

We have shown that $S$ is normalized by $X$ and has bounded index in it. It follows that $S$ has bounded index in any $X^n$, thus $X^n$ is medium.
\end{proof}

\begin{prop}
If we assume that both conditions (B1) and (B2) (equivalently (B1) and right-invariance) hold, then $pp^{-1}p$ is a coset of $Stab(p)$.
\end{prop}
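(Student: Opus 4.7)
The strategy is to exhibit a coset $S_p$ of $Stab(p)$ and show $pp^{-1}p = S_p$. Fix any $c_0 \models p$ and set $S_p := Stab(p)\cdot c_0$. Because for any two realizations $c, c' \models p$ we have $cc'^{-1} \in pp^{-1} \subseteq Stab(p)$, the coset $S_p$ does not depend on the choice of $c_0$ and contains all realizations of $p$. By Proposition \ref{prop_normal} (available since we assume (B1)), $Stab(p)$ is normal in $\langle X\rangle$, so $S_p$ is a two-sided coset. The easy inclusion $pp^{-1}p \subseteq S_p$ is then immediate: for $a, b, c \models p$, $ab^{-1} \in pp^{-1} \subseteq Stab(p)$, hence $ab^{-1}c \in Stab(p)\cdot c = S_p$.

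For the reverse inclusion $S_p \subseteq pp^{-1}p$, let $g \in S_p$ and choose $c \models p$ with $\tp(c/Mg)$ wide; such a $c$ exists because $p$ is wide over $M$. Set $h := gc^{-1}$. Since $g, c \in S_p$, we have $h \in S_p \cdot S_p^{-1} = Stab(p)$. The plan is to apply the final clause of Theorem \ref{th_babystab} to conclude $h \in St(p)$: that clause says that every element of $Stab(p)$ realizing a wide type over $M$ lies in $St(p)$. Once this is achieved, writing $h = ab^{-1}$ with $a, b \models p$ (possible because $St(p) \subseteq pp^{-1}$: any $h \in St(p)$ equals $(hd)d^{-1}$ for some $d \models p$ with $hd \models p$) gives $g = hc = ab^{-1}c \in pp^{-1}p$.

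The critical step is thus verifying that $\tp(h/M)$ is wide. Suppose for contradiction that it is not: then $h \in D$ for some $M$-definable $\mu$-small set $D$, and hence $c = h^{-1}g \in D^{-1}g$. Using the right-invariance of $\mu$ granted by (B2) together with a stability-type symmetry argument on the medium set $X^4 \supseteq Stab(p)$ (medium by Proposition \ref{prop_normal}), one shows that $D^{-1}g$ is an $Mg$-definable $\mu$-small set, contradicting wideness of $\tp(c/Mg)$. This wideness verification is the main technical hurdle; once it is in place the argument closes directly via Theorem \ref{th_babystab}.
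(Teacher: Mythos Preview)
Your overall strategy is the same as the paper's, and the easy inclusion $pp^{-1}p\subseteq S_p$ is fine. The problem is in the reverse inclusion, specifically in the ``critical step'' where you try to show that $\tp(h/M)$ is wide for $h:=gc^{-1}$.

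From $h\in D$ with $D$ an $M$-definable $\mu$-small set you correctly get $c\in D^{-1}g$. To contradict wideness of $\tp(c/Mg)$ you need $D^{-1}g\in\mu$. By right-invariance this is equivalent to $D^{-1}\in\mu$, i.e.\ to the ideal $\mu$ being closed under inversion. Nothing in the hypotheses (left-invariance, right-invariance, S1 on $X^4$) gives you that, and the ``stability-type symmetry argument on the medium set $X^4$'' you invoke does not supply it either: stability of the relation ``$gp\cap hq$ is wide'' says nothing about how $\mu$ interacts with the map $x\mapsto x^{-1}$ on arbitrary $M$-definable sets. So this step, as written, is a genuine gap.

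The fix is immediate once you see it, and it is exactly what the paper does: swap the order and consider $cg^{-1}$ instead of $gc^{-1}$. If $cg^{-1}\in D$ with $D\in\mu$, then $c\in Dg$, and $Dg\in\mu$ \emph{directly} by right-invariance---no inversion needed. Hence $\tp(cg^{-1}/Mg)$, and a fortiori $\tp(cg^{-1}/M)$, is wide. Since $cg^{-1}\in Stab(p)$, Theorem~\ref{th_babystab} yields $cg^{-1}\in St(p)\subseteq pp^{-1}$, so $gc^{-1}=(cg^{-1})^{-1}\in pp^{-1}$ and $g=gc^{-1}\cdot c\in pp^{-1}p$. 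With this one-symbol correction your argument becomes essentially identical to the paper's.
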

\begin{proof}
Let $c\models p$. By the previous proposition $Stab(p)$ is normal in the group generated by $X$. Since $pp^{-1}\subseteq Stab(p)$, $p$ lies entirely within one coset of $Stab(p)$ and hence $pp^{-1}p\subseteq Stab(p)c$. Conversely, take any $a\in Stab(p)c$ and let $b\models p$ such that $\tp(b/Ma)$ is wide. Then $ba^{-1}\in Stab(p)$ and $\tp(ba^{-1}/M)$ is wide by right-invariance. By Theorem \ref{th_babystab} any wide type in $Stab(p)$ is in $St(p)$, so $ba^{-1}\in St(p) \subseteq pp^{-1}$. So $a=ab^{-1}b \in pp^{-1}p$.
\end{proof}

We will now prove a stabilizer theorem which changes the
hypothesis of the previous ones in a manner which is tailored to
prove Theorem \ref{algebraic group chunk}. Possibly the best way
to understand the strength and need for the new hypothesis
(compared for example with Fact \ref{fact_stab}) is to read the
proof of the Theorem \ref{algebraic group chunk} and the footnote
we added there.

The main change of the hypothesis consists of relaxing the
requirement that $\mu$ is S1 on $(pp^{-1})^2$ and assume only that
$\mu$ is S1 on \emph{generic} products in $p^{-1}p$ (see condition
(B) below). As mentioned before, the need for this will be clear
in the proof of \ref{algebraic group chunk}, where we cannot
require S1 in all of $p^{-1}p$. We manage to achieve this at the
cost of introducing a technical assumption (A) for which we need
to introduce a second ideal $\lambda$ that will serve as a more
restrictive notion of medium. We will assume that $\lambda$ is
also invariant under left translations by elements of $G$. A type
which is not $\lambda$-wide will be called
\emph{$\lambda$-medium}. In Theorem \ref{algebraic group chunk},
this restriction will be key in order to show that Condition (A)
holds. It seems plausible that for many, or all, ideals $\mu$,
condition (A) holds with $\lambda$ being the ideal of all medium
sets. We were however not able to prove any general statement of
this kind.

\begin{thm}\label{th_stabilizer}
Let $\mu$ and $\lambda$ be $M$-invariant ideals on $G$ as above,
stable under left and right multiplication, and such that $\mu$ is
S1 in any $X\in \lambda$.

Assume we are given a wide and medium type $p$ in $G$ and the following conditions are satisfied:
\begin{description}
\item[(A)] for any types $q$, $r$, if for some $(c,d)\models q \nf
r$, $\tp(cd/M)$ or $\tp(dc/M)$ is $\lambda$-medium, then $q$ is
$\lambda$-medium; \item[(B)] for any $(a,b)\in p \nf p$,
$\tp(a^{-1}b/M)$ is $\lambda$-medium; \item[(F)] there are
$(a,b)\models p \nf p$ such that $\tp(a/Mb)$ does not fork over
$M$.
\end{description}
Then $Stab(p)=St(p)^2 = (pp^{-1})^2$ is a connected
type-definable, wide and $\lambda$-medium group. Also
$Stab(p)\setminus St(p)$ is contained in a union of non-wide
$M$-definable sets.
\end{thm}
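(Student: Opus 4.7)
The plan is to follow the skeleton of Theorem~\ref{th_babystab}, proving the analogs of Claims~1 through~5, but with every S1 application justified through (A), (B), (F) rather than a blanket S1 assumption on $(pp^{-1})^2$. I would first use (F) to pick $(a,b) \models p \nf p$ with $\tp(a/Mb)$ non-forking over $M$ (so the pair is non-forking in both directions), and set $q := \tp(a^{-1}b/M)$. By (B), $q$ is $\lambda$-medium. Applying (A) to the pair $(b, a^{-1}) \models p \nf p^{-1}$, for which the product $a^{-1} b$ realizes $q$, I conclude that $p$ itself is $\lambda$-medium; so both $p$ and $q$ concentrate on $\lambda$-small definable sets $Y_p, Y_q$ on each of which $\mu$ is S1.

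The main step is the Claim~1 analog, $q \subseteq St(p)$. Take an $M$-indiscernible sequence $(a_i)_{i<\omega}$ of realizations of $p$ with $\tp(a_n/Ma_{<n})$ non-forking, so $\tp(a_i^{-1}a_j/M) = q$ for $i < j$; the goal is to show $a_0 p \cap a_1 p$ is wide. Setting $b_i := a_0^{-1}a_i$ for $i > 0$, this is equivalent to $p \cap b_1 p$ being wide. Each $b_i$ realizes $q$ and lies in $Y_q$, and the argument applies S1 to a formula whose instances $b_i p'$ (for $p' \in p$ with $p' \subseteq Y_p$) can be controlled jointly using the $\lambda$-smallness of $Y_q$; via the contrapositive of S1, one obtains $b_i p' \cap b_j p'$ wide for $i \neq j$, hence $b_i^{-1}b_j = a_i^{-1}a_j \in St(p)$, giving $q \subseteq St(p)$.

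Once this is in hand, the rest follows the pattern of Theorem~\ref{th_babystab}. Lemma~\ref{repeatedClaim} applied with $r = q$---after a further use of (A) to confirm that $p^{-1}q$ is $\lambda$-medium---yields the Claim~2 analog $ba^{-1} \in St(q)$ for $(a,b) \models p \nf p$; the inductive Claim~3 ($Stab(q) \subseteq St(p)^2$) and the Claim~4 derivation $pp^{-1} \subseteq St(q)^2$ go through formally, together giving $Stab(p) = Stab(q) = (pp^{-1})^2 = St(p)^2$. The $\lambda$-mediumness of $Stab(p)$ is inherited from that of $q$ via a final application of (A) to control the types of products of elements of $q$. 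The two ``moreover'' statements---that $Stab(p)$ has no proper type-definable-over-$M$ subgroup of bounded index, and that $Stab(p) \setminus St(p)$ lies in a union of non-wide $M$-definable sets---are proved exactly as in Theorem~\ref{th_babystab}, using only the stability of the relation ``$g p \cap p$ is wide'' (valid because $p$ is $\mu$-medium) together with $G$-invariance of $\mu$.

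The central obstacle is the S1 verification in the Claim~1 analog: ensuring that as $b_i$ varies along an indiscernible sequence in $Y_q$, the relevant formulas all lie in a common set on which $\mu$ is S1. This is precisely the technical difficulty motivating condition (A), which propagates $\lambda$-mediumness through the intermediate types involved in each S1 application in the argument.
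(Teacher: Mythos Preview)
Your skeleton is right in spirit, but there is a genuine gap at the Claim~2 analog, and it stems from misreading the direction of condition~(A).

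\medskip

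\textbf{The real gap: $p^{-1}q$ need not be $\lambda$-medium.} You propose to obtain $ba^{-1}\in St(q)$ by invoking Lemma~\ref{repeatedClaim} with $r=q$, ``after a further use of (A) to confirm that $p^{-1}q$ is $\lambda$-medium''. But (A) propagates $\lambda$-mediumness \emph{from a product to its left factor}, not from factors to products. Nothing in the hypotheses forces the partial type $p^{-1}q$ (a union of types of the form $\tp(c^{-1}d/M)$ with $c\models p$, $d\models q$) to concentrate on a single set in $\lambda$; the footnote in the proof of Theorem~\ref{algebraic group chunk} records exactly this failure in the intended application. The same mistake recurs when you say the $\lambda$-mediumness of $Stab(p)$ is ``inherited from that of $q$ via a final application of (A) to control the types of products of elements of $q$'': (A) cannot control products.

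This is precisely the obstacle the paper's proof is engineered to avoid. The paper introduces the inverse ideal $\mu'$ (so that $\mu'$-wideness of a type is $\mu$-wideness of its inverse), sets $q=\tp(ab^{-1}/M)$, and replaces your $Stab(q)$ by $Stab'(q)$ computed with respect to $\mu'$. The analog of your Claim~2 becomes Claim~4 ($aa'^{-1}\in St'(q)$), and the S1 application there is justified not via $p^{-1}q$ being medium, but via a separate technical step (Claim~3): using (F) one builds a specific $(a,b)\models p\nf q$ with $\tp(a^{-1}b/M)$ and its inverse both $\lambda$-medium, by writing $a^{-1}b$ as a product landing in $St_r(p)$ and then invoking (A) in the correct direction to strip off a factor. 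That medium type then serves as the ``container'' for the S1 argument. At the end, $\lambda$-mediumness of $Stab(p)$ follows because every type in $Stab'(q)$, when multiplied by a non-forking realization of $q$, lands in $St(p)$ (which is medium), and now (A) is applied in its intended direction.

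\medskip

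\textbf{A smaller issue in your Claim~1.} Your account of the S1 step is muddled: you say the $b_i$ lie in $Y_q$ and try to apply S1 to the formulas $b_i p'$. But those formulas are translates of $p$, not subsets of $Y_q$. The correct move (and what the paper does) is to observe that $a_i^{-1}a_j\in a_i^{-1}p\cap q$ with $\tp(a_i^{-1}a_j/Ma_i)$ wide, so each set $a_i^{-1}p\cap q$ is wide; since these all lie inside a $\lambda$-set containing $q$, S1 there gives $a_0^{-1}p\cap a_1^{-1}p\cap q$ wide. This actually proves $a_1a_0^{-1}\in St(p)$ for general $(a_0,a_1)\in p\nf p$; your conclusion $q=\tp(a^{-1}b/M)\subseteq St(p)$ for the specific (F)-pair then follows by inverse-closure of $St(p)$.
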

\begin{proof}
Throughout this proof, we will refer to $\lambda$-medium as
``medium''.

Condition (A) implies that if $q$ is a medium type, then both
$St(q)$ and $St_r(q)$ are medium. Together with Condition (B)
it also implies that $p^{-1}$ is medium.

\begin{claim1} If $(a,b)\in p \nf p$, then $ba^{-1}\in St(p)$.
\end{claim1}
\begin{claimproof}
We have to prove that $ba^{-1}p\cap p$, or equivalently $a^{-1}p\cap b^{-1}p$, is wide.
By stability, it is enough to prove this for some pair $(a,b)\in p\nf p$.
Let $(a_i:i<\omega)$ be an indiscernible sequence in $p$ such that $\tp(a_i/Ma_{<i})$
does not fork over $M$. Take $a=a_0$ and $b=a_1$.
Let $r:=tp(a^{-1}b/M)$, which is medium by
Condition (B). Also, since $tp(b/Ma)$ is wide and $\mu$ is left
invariant we know that $tp(a^{-1}b/Ma)$ is wide. Now,
$a^{-1}b\models a^{-1}p\cap r$ so $a^{-1}p\cap r$ is wide. Since
$a,b$ start an  indiscernible sequence, by S1 we have that
$a^{-1}p\cap b^{-1}p\cap r$ is wide, so $a^{-1}p\cap b^{-1}p$ is
wide as required.
\end{claimproof}

\begin{claim1'} If $(a,b)\in p \nf p$, then $a^{-1}b\in St_r(p)$.
\end{claim1'}
\begin{claimproof} By Claim 1, we have that $ba^{-1} \in St(p)$, so
in particular $r':=tp(ba^{-1}/A)$ is medium. Now, as in the
previous claim using S1 and invariance we have that
$tp(ba^{-1}/Aa)$ is wide. Since $ba^{-1}$ realizes $pa^{-1}\cap
r'$ the latter must be wide, and by S1 $pa^{-1}\cap pb^{-1}$ is
wide. By invariance $pa^{-1}b\cap p$ is wide, as required.
\end{claimproof}

Let $\mu'$ be the ideal defined by $\phi(x)\in \mu' \iff
\phi(x^{-1})\in \mu$. Then $\mu'$ is $M$-invariant, invariant
under left and right multiplication and is S1 on any inverse of a
medium type. We will write $St'$, $Stab'$ for the stabilizers with
respect to $\mu'$. Notice that since $p^{-1}$ is wide and medium,
$p$ is $\mu'$-wide and $\mu'$ is S1 on $p$.

Let $(a,b)\models p\times p$, $\tp(b/Ma)$ wide (hence non-forking
over $M$) and $q=\tp(ab^{-1}/M)$. Then $q$ is $\mu'$-wide and is
in $St(p)$, as $St(p)$ is closed under inverses, and thus $q$ and
$q^{-1}$ are medium. Also if $(c,d)\models q\nf q$, then
$\tp(c^{-1} d/M) \in St(p)$ by Lemma \ref{lem1}. In particular
$\tp(c^{-1}d/M)$ is medium.

\begin{claim2} If $(b,c)\in Stab'(q)\nf q$, then $bc\in St(p)$.
\end{claim2}
\begin{claimproof}As $St'(q)$ is stable under inverse, we can write $b=b_1\cdots b_n$, with each $b_i \in St'(q)$. We show the result by induction on $n$. For $n=0$, it is clear.

Assume we know it for $n-1$ and take $b=b_1\cdots b_n$. We have to show that $b_n^{-1}\cdots b_1^{-1} p \cap cp$ is wide. As $b_n \in St'(q)$, there is $c'\models q$, $\tp(c'/Mb_n)$ $\mu'$-wide such that $b_n c' \models q$. We may also assume that $\tp(c'/Mb_1\ldots b_n)$ is $\mu'$-wide. Then by translation invariance, $\tp(b_nc'/Mb_1\ldots b_n)$ is $\mu'$-wide. By induction, $b_{n-1}^{-1}\cdots b_1^{-1} p \cap b_nc' p$ is wide. We conclude by stability.
\end{claimproof}

\begin{claim3} There is $(a,b)\models p\nf q$, $\tp(b/Ma)$ $\mu'$-wide, such that $\tp(a^{-1}b/M)$ and its inverse are medium.
\end{claim3}

\begin{claimproof} By (F) there is $(c,d)\in p\nf p$ such that also $\tp(c/Md)$ does not fork over $M$. Let $r=\tp(d^{-1}c/M)$.
Let $a\models p$ and choose $b_0$ such that $\tp(a,b_0/M)=\tp(d, c/M)$. Then $a^{-1} b_0\models r$ and $\tp(b_0/Ma)$ does not fork over $M$.
Now choose $b_1\models p$ such that  $\tp(b_1/Mb_0)$ is wide and $\tp(b_0b_1^{-1}/M)=q$. We can furthermore assume that $\tp(b_1/Mab_0)$ is wide. By translation invariance, $\tp(b_0b_1^{-1}/Ma)$ is $\mu'$-wide. Now pick $b_2$ such that $\tp(b_2/Mab_0b_1)$ is non-forking over $M$ and $\tp(b_1,b_2/M) = \tp(c,d/M)$ so that $\tp(b_1^{-1}b_2/M)= r^{-1}$. By transitivity of non-forking, we have $\tp(b_1^{-1}b_2/Mab_0)$ is non-forking over $M$. Hence $(a^{-1}b_0,b_1^{-1}b_2)\models r \nf r^{-1}$.

By Claim 1' and since $St_r(p)$ is stable under inversion, $r\in
St_r(p)$ and by Lemma \ref{lem1}, $a^{-1}b_0b_1^{-1}b_2$ is also
in $St_r(p)$. It follows that $\tp(a^{-1}b_0b_1^{-1}b_2/M)$ and
its inverse are medium. By hypothesis (A),
$\tp(a^{-1}b_0b_1^{-1}/M)$ and its inverse are medium.
\end{claimproof}

\begin{claim4} If $(a,a')\models p\nf p$, then $aa'^{-1}\in
St'(q)$.\end{claim4}

\begin{claimproof} By Claim 3, (and because $p$ is $\mu'$-wide), we can
find $b\models q$ and some $a_0\models p$ with $\tp(b/Ma_0)$
$\mu'$-wide $r= \tp(a_0^{-1}b/M)$ and its inverse are medium.

Extending we can find a sequence $( a_i)_{i\in
\kappa}$ such that $\tp(b/Ma_{<\kappa})$ is non forking and, since
$\mu'$ is medium in $q$, $\tp(b/Ma_{<\kappa})$ is $\mu'$-wide. By
Erd\H os-Rado  if we take $\kappa$ large enough we can find a
subsequence $(a'_i)_{i<\omega}$ indiscernible over $Mb$.

Now, $a_0'^{-1}b \in a_0'^{-1}q\cap r$. By translation invariance,
$\tp(a_0'^{-1}b/Ma_0')$ is $\mu'$-wide, hence $a_0'^{-1}q \cap r$
is $\mu'$-wide. By indiscernibility, $a_i'^{-1}q \cap r$ is
$\mu'$-wide for all $i$. As $\mu'$ is S1 on $r^{-1}$, it follows
that $a_0'^{-1}q\cap a_1'^{-1}q$ is $\mu'$-wide.

The claim follows by stability.
\end{claimproof}

\medskip
Now we can conclude: we have, by Claim 2, $Stab'(q)\subseteq
St(p)^2 \subseteq (pp^{-1})^2$. Let $a,b \models p$ and choose
$c\models p$ such that $\tp(b/Mc)$ and $\tp(c/Mb)$ do not fork
over $M$ (using (F)). We can furthermore assume that $\tp(c/Mab)$
does not fork over $M$. Then $(a,c)\models p\nf p$ and
$(c,b)\models p\nf p$ and $ab^{-1} = (ac^{-1})(cb^{-1})$. By Claim
4, both $ac^{-1}$ and $cb^{-1}$ are in $St'(q)$, therefore
$ab^{-1} \in Stab'(q)$. We thus have $pp^{-1}\subseteq Stab'(q)$.
Therefore $Stab'(q)=St(p)^2=(pp^{-1})^2$ and as $Stab'(q)$ is a
subgroup, $Stab'(q)=Stab(p)$. Type-definability of $Stab(p)$ is
clear, so is wideness. The fact that $Stab(p)=Stab'(q)$ is medium
follows from Claim 2 and property (A).

Connectedness is proved as in of Theorem \ref{th_babystab}.
Finally, the fact that any wide type in $Stab(p)$ lies in $St(p)$
is proved as in Theorem \ref{th_babystab} replacing $Stab(q)$
there by $Stab'(q)$.
%
\end{proof}

The following lemma will be useful later to check that the
hypotheses of Theorem \ref{algebraic group chunk} are satisfied.

\begin{lemme}\label{lem_stqr}
Assume that $\mu$ is left invariant and condition (A) holds. Let $q$, $r$ be medium and wide types. Let $p\in St(q,r)$ be a wide type and take $(a,b)\in p \nf p$. Then $\tp(a^{-1}b/M)$ is medium.
\end{lemme}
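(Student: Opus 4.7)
The plan is to combine Lemma \ref{lem0} (which gives us that $a^{-1}b$ lies in the stabilizer of $q$) with hypothesis (A), using the fact that membership in $St(q)$ provides exactly the kind of non-forking multiplicative pair that (A) requires.

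First, I would apply Lemma \ref{lem0}: since $q$ and $r$ are medium and wide and $p \in St(q,r)$, for any $(a,b) \in p \nf p$ we have $a^{-1}b \in St(q)$. Let $s := \tp(a^{-1}b/M)$, so $s \subseteq St(q)$ in the sense that every realization of $s$ stabilizes $q$. Unpacking the definition of $St(q)$, there is a realization $g \models s$ together with $c \models q$ such that $\tp(c/Mg)$ is wide and $gc \models q$.

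Next I would upgrade the wideness of $\tp(c/Mg)$ to non-forking. Since $q$ is medium by hypothesis, it concentrates on a definable set $X$ on which $\mu$ is S1, and $c \models q$ lies in $X$. By Fact \ref{wide}, wideness of $\tp(c/Mg)$ forces it to be non-forking over $M$. Hence the pair $(g,c)$ satisfies $(g,c) \models s \nf q$, and the product $gc$ realizes $q$, which is medium.

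The last step is a direct invocation of Condition (A): taking ``$q$'' in the statement of (A) to be our $s$ and ``$r$'' in (A) to be our $q$, we have a pair $(g,c) \models s \nf q$ with $\tp(gc/M) = q$ being $\lambda$-medium. Condition (A) then immediately yields that $s = \tp(a^{-1}b/M)$ is $\lambda$-medium, as required. There is no real obstacle here; the content of the lemma is essentially that (A) is precisely designed so that the general fact ``$s \in St(q) \Rightarrow s$ medium'' holds, once one notices that a witness of $s \in St(q)$ provides a non-forking factorization landing in the medium type $q$.
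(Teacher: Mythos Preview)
Your proof is correct and follows essentially the same approach as the paper: first establish $a^{-1}b\in St(q)$, then use condition (A) to conclude that $\tp(a^{-1}b/M)$ is $\lambda$-medium. The only differences are cosmetic: the paper reproves the content of Lemma~\ref{lem0} in place rather than citing it, and leaves the final application of (A) implicit (relying on the observation, made in the proof of Theorem~\ref{th_stabilizer}, that (A) forces $St(q)$ to be medium whenever $q$ is), whereas you spell out that step explicitly via a witness $(g,c)\models s\nf q$ with $gc\models q$.
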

\begin{proof}
We show that $a^{-1}b\in St(q)$, {\it i.e.}, that $aq \cap bq$ is
wide. As $q$ is medium, by stability, it is enough to show this
for some pair $(a,b)\in p\nf p$. Take $(a_i:i<\omega)$ an
indiscernible sequence in $p$ with $\tp(a_n/Ma_{<n})$ wide; it is
enough to show that $a_0q \cap a_1q$ is wide. By assumption $a_0 q
\cap r$ is wide. As $r$ is medium, by the S1 property, $a_0q \cap
a_1q \cap r$ is wide, hence $a_0q \cap a_1 q$ is wide as required.
\end{proof}

\subsection{Applying the Stabilizer Theorem: algebraic group chunks}

This section is devoted to proving that Theorem
\ref{th_stabilizer} implies the existence of large algebraic
subgroups in many theories, which can be seen as a generalization
of results in \cite{HrPi}.

We will need to adapt some of the definitions from \cite{HrPi}.

\begin{defi}
A theory $T$ in a language containing the language of rings and
which contains the theory of fields, is \emph{algebraically
bounded} if, given any formula $\phi(\bar{x},y)$, there are
polynomials $f_1(\bar{x}, y),\ldots, f_n(\bar{x},y) \in
\mathbb{Z}[\bar{x},y]$ such that, whenever $K$ is a model of $T$
and $\bar{a}$ is a tuple of elements of $K$ such that
$\phi(\bar{a},K):= \{y \in K: \phi(\bar{a},y)\}$ is finite, then
there is an index $i$ such that the polynomial $f_i(\bar{a},y)$ is
not identically $0$ on $K$ and $\phi(\bar{a},K)$ is contained in
the set of roots of $f_{i}(\bar{a},y)= 0$.
\end{defi}

The following is Theorem 3.1 in \cite{HrPi}, which can be seen as
an ``algebraic group configuration'' theorem.

\begin{fait}\label{fact_groupconf_original}
Let $T$ be a theory extending the theory of fields
which is algebraically bounded.
Let $\mathcal{U}$ be a monster model of $T$.
Let $G$ be a group definable in $T$
over a set $A$, and let $a,b,c\in G(\mathcal{U})$ be such that $a\cdot_G b=c$ and such that
$a$ and $b$ are algebraically independent over $A$.

Then there is a set $B$ containing $A$ such that $a$ and $b$ are still algebraically independent
over $B$, a $B$-definable algebraic group
$H$ and dimension-generic elements $a', b', c'\in H(\mathcal{U})$
such that $a'\cdot b' = c'$ and $\acl(Ba)=\acl(Ba')$,
$\acl(Bb)=\acl(Bb')$ and $\acl(Bc)=\acl(Bc')$.
\end{fait}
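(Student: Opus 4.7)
The plan is to reduce the statement to the classical Weil--Hrushovski theorem on birational group chunks in algebraic geometry, with algebraic boundedness serving as the bridge between the model-theoretic data and the algebraic category. In an algebraically bounded expansion of a field, the model-theoretic algebraic closure $\acl$ agrees with field-theoretic algebraic closure, so loci of tuples over $A$ are honest absolutely irreducible algebraic varieties, and model-theoretic algebraic independence translates into additivity of transcendence degree. This is what will allow us to extract an algebraic correspondence from the group multiplication of $G$.

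First I would let $V$, $W$, $U$ denote the Zariski loci over $\acl(A)$ of $a$, $b$, $c$ respectively; by algebraic boundedness these are absolutely irreducible varieties of equal dimension, with $a$, $b$, $c$ as generic points. Since $a$ and $b$ are independent over $A$, the pair $(a,b)$ is generic in $V \times W$. Because $c = a \cdot_G b$ lies in $\dcl(Aab)$ and has the same dimension as $a$, the Zariski closure $\Gamma$ of the locus of $(a,b,c)$ in $V \times W \times U$ projects dominantly and generically finitely onto $V \times W$; by symmetry, using $b = a^{-1}\cdot_G c$ and $a = c \cdot_G b^{-1}$, the projections onto $V \times U$ and $U \times W$ are also dominant and generically finite. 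Thus $\Gamma$ encodes three mutually inverse dominant rational maps, which record the graph of $\cdot_G$ on its generic part.

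Next I would enlarge $A$ to a small parameter set $B$ over which $V$, $W$, $U$ and the three rational maps are all defined, choosing parameters independent from $ab$ over $A$ so that $a,b$ remain algebraically independent over $B$. Applying the associativity of $\cdot_G$ to a triple $(a,b,d)$ of mutually algebraically independent elements of $G$ over $B$, one forces the two obvious compositions of the rational maps extracted above to agree on a Zariski-dense set, hence to be equal as rational maps. The data $(V,W,U,\Gamma)$ together with its inversion and associativity properties is then a Weil-style algebraic group configuration, and the Weil--Hrushovski group chunk theorem produces a connected algebraic group $H$ defined over $B$ together with birational identifications $V, W, U \dashrightarrow H$ converting the rational composition into the group law of $H$. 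The images $a', b', c'$ of $a, b, c$ under these identifications are dimension-generic in $H$, satisfy $a' \cdot b' = c'$, and are interalgebraic over $B$ with $a, b, c$ as required.

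The main technical obstacle is making sure that the configuration really satisfies the hypotheses of the group-chunk theorem in the algebraic category, not merely in the sense visible to $T$: one must argue, using algebraic boundedness in an essential way, that $\Gamma$ has the expected dimension (so that its generic fibers over each factor are finite), and that generic associativity and generic invertibility lift from equalities of elements of $G$ to equalities of rational maps between varieties. Once this is verified, the conclusion of the classical group-chunk theorem yields the algebraic group $H$ and the triple $(a', b', c')$ with the stated interalgebraicity properties directly.
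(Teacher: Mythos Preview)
This statement is quoted as Theorem~3.1 of \cite{HrPi} and is not proved in the present paper. The paper does, however, recap part of the Hrushovski--Pillay argument in the proof of Proposition~\ref{prop_groupconf}, and that argument does not go through a Weil group chunk on the loci of $a,b,c$. Instead one passes to the field-theoretic algebraic closure $M^{alg}$ (so to ACF, a stable theory), takes the canonical base $\sigma$ of $\tp(b_1,c_1/Aa_1)$ and interprets it as the germ of a definable function from $q_1=\qftp(b_1/A)$ to $q_2=\qftp(c_1/A)$, composes such germs to obtain germs $\tau$ of maps $q_1\to q_1$, and then invokes Hrushovski's stable group configuration theorem (cited as Proposition~1.8.1 of \cite{HrPi}) to produce an algebraic group $H$ acting transitively on a set with generic type $q_1^{qf}$. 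The group is built from the germs, not from a birational law on the locus of $a$.

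Your sketch has a concrete gap. You assert that $c$ has the same dimension as $a$ and that $\Gamma$ projects dominantly and generically finitely onto $V\times U$ and $U\times W$, but the hypotheses do not force $a,b,c$ to be equidimensional or pairwise independent. For instance, in $G=(K^2,+)$ with $A=\emptyset$, take $a=(t,0)$ and $b=(0,u)$ with $t,u$ algebraically independent; then $a$ and $b$ are independent of dimension $1$, while $c=(t,u)$ has dimension $2$ and $a\in\acl(c)$, so $(a,c)$ is not generic in $V\times U$ and the projection $\Gamma\to V\times U$ is not dominant. The Hrushovski--Pillay proof handles such discrepancies by a preliminary base change (their Lemma~3.2, multiplying by auxiliary generics of $G$) before ever looking at loci, and then works with germs and canonical bases rather than with $V,W,U$ directly; a Weil-style argument would need an analogous reduction to the equidimensional, pairwise-independent case, and that reduction is where the real content is.
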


We will prove the following, ``algebraic group chunk'' theorem.

\begin{thm}\label{algebraic group chunk}
Let $T$ be a theory extending the theory of fields
which is algebraically bounded and such that any model of $T$ is
definably closed in its algebraic closure. Let $G$ be a group
definable in a $\omega$-saturated model $M$ of $T$. Assume that
$T$ admits an $M$-invariant ideal $\mu_G$ on $G$, stable under
left and right multiplication, and such that $\mu_G$ is S1 in $G$.
Finally, assume also that there is a $\mu_G$-wide type $p$ such
that condition (F) holds: There are $(a,b)\models p \nf p$ such
that $\tp(a/Mb)$ does not fork over $M$.

Then there is an algebraic group $H$ and a definable finite-to-one
group homomorphism from a type-definable wide subgroup $D$ of $G$
to $H(M)$.
\end{thm}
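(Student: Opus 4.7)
My plan is to combine the algebraic group configuration (Fact~\ref{fact_groupconf_original}) with an application of Theorem~\ref{th_stabilizer} on the product $G\times H$. First, using (F) together with algebraic boundedness (under which nonforking on wide types coincides with algebraic independence), I would pick $(a,b)\models p\nf p$ with $\tp(a/Mb)$ also nonforking, so that $a,b$ are algebraically independent over $M$; set $c=a\cdot_G b$. Fact~\ref{fact_groupconf_original} then yields a parameter set $B\supseteq M$, a $B$-definable algebraic group $H$, and dimension-generic $a',b',c'\in H$ with $a'b'=c'$ and $\acl(Bx)=\acl(Bx')$ for $x\in\{a,b,c\}$. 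By algebraic boundedness, the correspondence $x\leftrightarrow x'$ is finite-to-finite and cut out by polynomial equations over $B$.

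\medskip

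\textbf{Paragraph 2 (Stabilizer on the product).} I would form the type $\pi=\tp(a,a'/B)$ on $G\times H$, define an ideal $\mu$ on $G\times H$ by declaring $X\in\mu$ iff its $G$-projection lies in $\mu_G$ (so that $\pi$ is $\mu$-wide and $\mu$ is left and right translation invariant), and take $\lambda$ to be the ideal of subsets whose $G$- and $H$-projections each have algebraic dimension at most $\dim(a/B)=\dim(a'/B)$. Condition (A) of Theorem~\ref{th_stabilizer} follows from additivity of algebraic dimension under nonforking (as in Lemma~\ref{lem_stqr}); condition (B) follows because for $(a_1,a_1'),(a_2,a_2')\models\pi\nf\pi$ the element $(a_1^{-1}a_2,a_1'^{-1}a_2')$ has bounded dimension in both coordinates, thanks to $a_i'\in\acl(Ba_i)$; condition (F) transports from $G$ through the correspondence. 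Theorem~\ref{th_stabilizer} then produces $S:=Stab(\pi)=(\pi\pi^{-1})^2$, a connected, wide, type-definable subgroup of $G\times H$.

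\medskip

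\textbf{Paragraph 3 (From graph to homomorphism, and main obstacle).} Because the correspondence is finite-to-finite, the projection $S\to G$ is finite-to-one onto a type-definable wide subgroup $D\le G$, and $K:=\{h\in H:(e_G,h)\in S\}$ is a finite subgroup of $H$. Replacing $H$ by the algebraic group $H/K$, $S$ becomes the graph of a group homomorphism $f\colon D\to H/K$; the hypothesis that models of $T$ are definably closed in their algebraic closure ensures that $f$ is $M$-definable and takes values in $(H/K)(M)$, while finite-to-oneness is inherited from the other projection $S\to H/K$. The main technical obstacle is the verification of Theorem~\ref{th_stabilizer}'s hypotheses on $G\times H$: although $\mu_G$ is S1 on all of $G$, the lifted ideal $\mu$ is \emph{not} S1 on all of $\pi\pi^{-1}$ (the $H$-projection dimension can blow up for non-generic products), which is why the flexible $\lambda$-formulation of Theorem~\ref{th_stabilizer} is essential and Theorem~\ref{th_babystab} does not suffice---precisely the subtlety anticipated by the footnote in the theorem's statement.
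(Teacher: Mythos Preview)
Your overall strategy matches the paper's, but there are two genuine gaps.

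First, Fact~\ref{fact_groupconf_original} only produces a base $B\supseteq A$; nothing in your argument ensures $B\subseteq M$. If $B\not\subseteq M$, then $\mu_G$ need not be $B$-invariant, Theorem~\ref{th_stabilizer} cannot be applied over $B$, and the resulting subgroup and homomorphism are not type-definable over $M$. The paper handles this in Proposition~\ref{prop_groupconf}, tracing through the construction of \cite{HrPi} to show that all base changes can be made inside $M$, using $\omega$-saturation and the hypothesis that models are definably closed in their algebraic closure.

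Second, and more seriously, your choice of $\lambda$ is wrong. You take $\lambda$ to consist of sets whose projections to $G$ and $H$ have dimension at most $d=\dim(a/B)$; the paper instead takes $\lambda$ to be the ideal of sets whose two projections have \emph{finite fibers}. With your $\lambda$, even granting that Theorem~\ref{th_stabilizer} yields a $\lambda$-medium $S=Stab(\pi)$, you would only know $\dim(\pi_G(S))\le d$ and $\dim(\pi_H(S))\le d$; this does \emph{not} imply that the projections are finite-to-one, so the opening of Paragraph~3 (``Because the correspondence is finite-to-finite, the projection $S\to G$ is finite-to-one'') is a non sequitur. Interalgebraicity of $a$ and $a'$ tells you $\pi$ itself has finite fibers, but, as you yourself note, products of finite-fiber types need not have finite fibers --- the entire purpose of the $\lambda$-mechanism is to propagate finite fibers to $S$, and for that you must use the paper's $\lambda$. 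With the correct $\lambda$, condition (A) is verified directly: if $g_0h_0\in\acl(M g_1h_1)$ and $\tp(h/Mg)$ does not fork over $M$, then $g_0\in\acl(Mg_1)$ (this is the Claim in the paper's proof). Your attribution of (A) to Lemma~\ref{lem_stqr} is also misplaced: that lemma is precisely what gives condition (B), not (A). A minor additional point: in Paragraph~3 the finite group $K$ need not be normal in $H$, so $H/K$ is not a group; the paper first passes to the centralizer of $\pi_2(K_1)$ in $H$ before quotienting.
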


We begin with the following proposition.

\begin{prop}\label{prop_groupconf}
Let $T, \mu_G,p$ and $M$ be as in the statement of Theorem
\ref{algebraic group chunk}. Let $\mathcal{U}$ be a monster model of
$M$. Let $a\models p|M$, $b\models p|Ma$, and $c=a\cdot_G b$. Then
$\tp(c/Ma)$ is $\mu_G$-wide, and there is an $M$-definable
algebraic group $H$ and dimension-generic elements $a', b', c'\in
H(\mathcal{U})$ such that $a'\cdot b' = c'$ and
$\acl(Ma)=\acl(Ma')$, $\acl(Mb)=\acl(Mb')$ and
$\acl(Mc)=\acl(Mc')$.
\end{prop}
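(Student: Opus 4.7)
The plan is to combine the left-invariance of $\mu_G$ with the algebraic boundedness of $T$ so as to apply the classical algebraic group configuration theorem, Fact \ref{fact_groupconf_original}, and then to descend the resulting algebraic group to $M$-definability using the model-closure hypothesis together with the $\omega$-saturation of $M$. The wideness of $\tp(c/Ma)$ comes for free: left multiplication by $a$ is an $Ma$-definable bijection of $G$ which, by the invariance of $\mu_G$ under left translation, preserves $\mu_G$; hence $\tp(c/Ma) = a\cdot \tp(b/Ma)$ is $\mu_G$-wide.

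To apply Fact \ref{fact_groupconf_original} I first need to show that $a$ and $b$ are algebraically independent over $M$. Using condition (F) and a change of representative if needed, I may assume that $(a,b) \models p\nf p$ with $\tp(a/Mb)$ non-forking over $M$; the symmetric non-forking of $\tp(b/Ma)$ over $M$ is automatic from the wideness of $\tp(b/Ma)$ via Fact \ref{wide}. In the algebraically bounded theory $T$ there is an $\acl$-based dimension preserved under non-forking extensions, so $\dim(a/Mb) = \dim(a/M) = \dim(p)$. Assuming $\dim(p) > 0$ (the algebraic case being trivial, with $H$ taken to be a finite group), this forces $a \notin \acl(Mb)$ and symmetrically $b \notin \acl(Ma)$, as required.

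Now Fact \ref{fact_groupconf_original}, applied to $a \cdot_G b = c$ with base $A = M$, yields a set $B \supseteq M$, a $B$-definable algebraic group $H$, and dimension-generic $a', b', c' \in H(\mathcal{U})$ with $a' \cdot b' = c'$ and $\acl(Ba) = \acl(Ba')$, and similarly for $b$ and $c$. To descend from $B$ to $M$: the canonical parameter of $H$ and of the algebraic correspondences relating $a$ to $a'$ (and similarly for $b$ and $c$) are invariant under automorphisms of $\mathcal{U}$ fixing $M$ up to equivalence of configurations, hence lie in $\acl(M)$; the hypothesis that models of $T$ are definably closed in their algebraic closure then gives $\acl(M) = \dcl(M) = M$, while the $\omega$-saturation of $M$ allows us to realize in $M$ any auxiliary generic parameters needed to witness the construction. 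Thus $H$ can be taken $M$-definable and the required interalgebraicities $\acl(Ma) = \acl(Ma')$, and similarly for $b$ and $c$, hold over $M$.

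The main obstacle is the algebraic-independence step, where ideal-theoretic wideness must be converted into field-theoretic algebraic independence via algebraic boundedness. The descent step is also somewhat delicate, relying on the model-closure hypothesis to bring canonical parameters into $M$ and on $\omega$-saturation to realize auxiliary generic data.
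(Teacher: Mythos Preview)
Your argument for the $\mu_G$-wideness of $\tp(c/Ma)$ via left-invariance is fine. The substantive content of the proposition, however, is the $M$-definability of $H$, and here your descent argument has a genuine gap.

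You claim that ``the canonical parameter of $H$ \ldots\ [is] invariant under automorphisms of $\mathcal{U}$ fixing $M$ up to equivalence of configurations, hence lie[s] in $\acl(M)$''. But the group configuration construction does not produce a canonical object: the set $B$ in Fact~\ref{fact_groupconf_original} is built through a sequence of auxiliary choices, and the resulting $H$ and correspondences depend on those choices. There is no well-defined parameter for automorphisms to fix. (Also, $\acl(M)=M$ is automatic for any model $M$; that is not where the definably-closed-in-algebraic-closure hypothesis enters.) Your remark that $\omega$-saturation lets one ``realize in $M$ any auxiliary generic parameters'' is the right instinct, and the paper's proof is precisely a careful execution of that idea: one opens up the construction in \cite{HrPi} and checks that each base extension can be taken inside $M$. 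The difficulty is the \emph{last} base change, where \cite{HrPi} first picks a generic $\tau_1$ in the constructed group and then \emph{defines} $b_2:=\tau_1^{-1}\cdot b_1$, with $b_1$ already fixed (interalgebraic with $b$). There is no reason for this $b_2$ to lie in $M$, and $\omega$-saturation does not help since $b_2$ is not freely chosen. The paper's fix is to reverse the order: pick $b_2\in M$ first as a realization of the relevant quantifier-free type, and then take $\tau_{2,1}$ to be the germ of the map sending $b_2$ to $b_1$. The hypothesis that models of $T$ are definably closed in their algebraic closure is used exactly here, to ensure that such germs (canonical bases computed a priori in $M^{alg}$) actually land in $\mathcal{U}$.

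A smaller point: your assertion that non-forking preserves the $\acl$-dimension in an algebraically bounded theory is not justified without a rosiness assumption, which the paper explicitly avoids (see the remark immediately following the proof).
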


\begin{proof}
Let $A$ be the (finite) set of parameters over which $G$ is
defined.

Note that compared to Fact \ref{fact_groupconf_original}, we require that the set $B$
in the statement can be found inside $M$. This is clear
throughout the proof in \cite{HrPi}, except maybe for the last
base change. We will therefore recall the stage of the
construction prior to the last base changes, and show why we can
complete the proof with our requirements.

Let $M^{alg}$ be the field theoretic algebraic closure of $M$ in
the language of rings (so a model of algebraically closed fields).

The construction yields elements  $a_1, b_1, c_1$ in $\mathcal U$
satisfying the algebraic relations in the statement of the
theorem, and $\sigma$ the canonical base (in $M^{alg}$) of
$\tp(b_1, c_1/A a_1)$. This type is stationary, so $\sigma$ is
definable in $\mathcal U$. The element $\sigma$ defines a map from
$q_1:=\qftp(b_1/A)$ to $q_2:=\qftp(c_1/A)$, and any $b_2, c_2$
realizations of $q_1, q_2$ in $\mathcal U$, define some $\sigma'$
which (because $\mathcal U$ is definably closed in its algebraic
closure) will be in $\mathcal U$.

Take independent $\sigma_1, \sigma_2\models \tp(\sigma/A)$ and
elements $b_1', b_2'$ and $c'\models q_2$ with $\sigma_1(b_1')=c'$
and $\sigma_2(b_2')=c'$. If we take $\tau_{1,2}$ to be the
canonical base of $\tp(b_1', b_2'/A\sigma_1, \sigma_2)$, then one
can show (and it is shown in \cite{HrPi}) that $\tau_{1,2}$ gives
the germ of a function from $q_1$ to itself sending $b_1'$ to
$b_2'$ and thus can be identified with what would be the function
$\sigma_2^{-1}\circ \sigma_1$.

Notice that by stationarity, given \emph{any} $b_1', b_2'$
realizations of $q_1$ we can find some $c'$ realizing $q_2$ inside
$\mathcal U$. Then, if we take $\sigma_1'$ the canonical base of
$\tp(b'_1 c'/A)$ and $\sigma_2'$ the canonical base of $\tp(b'_2
c'/A)$, then the canonical base $\tau_{1,2}'$ of $\tp(b_1',
b_2'/A\sigma_1, \sigma_2)$ will be identified with a function
sending $b_1'$ to $b_2'$. Once again, if $b_1'$ and $b_2'$ were
chosen in $\mathcal U$, we get $\tau_{1,2}'$ in $\mathcal U$.

The proof in \cite{HrPi} now uses the stable group configuration
theorem (due to Hrushovski, stated as Proposition 1.8.1 in
\cite{HrPi}) which gives a $M^{alg}$-definable algebraic group $H$
with generic type $s^{qf}$ (the quantifier free formulas in $s$)
acting transitively on a set $X$ with generic type $q_1^{qf}$. So
$\tau$ is an element of $H(\mathcal U)$.

The proof then concludes by first adding $\sigma_1\models
\tp(\sigma/A)$ to the base (which can of course be chosen inside
$M$) and then choosing $\tau_1\models s$, define $b_2=\tau_1^{-1}
b_1$ and add $b_2$ to the base. In this order it is impossible to
guarantee that $b_2$ belongs to $M$. However, we can choose
$b_2\in M$ a realization of $q_1$, and choose $\tau_{2,1}$ be the
germ sending $b_2$ to $b_1$. As discussed above this can always be
chosen and $\tau_{2,1}$ would be an element of $H(\mathcal U)$.
\end{proof}

\begin{rem}
Barriga \cite{Ba} dealt with the choice of $b_2$ in a different
way in the context of bounded groups definable in real closed
fields. However, her proof does not work in the general
context we are working with (specifically, it requires
``rosiness'' of $T$).
\end{rem}

\begin{proof}[Proof of Theorem \ref{algebraic group chunk}]

Let $a,b,c$ be as in the statement of Proposition \ref{prop_groupconf}.
So there is an $M$-definable algebraic group $(H, \cdot_H)$
and $a', b', c' \in H$ such that $c'= a'\cdot_H b'$, $\acl(Ma)=
\acl(Ma'), \acl(Mb)= \acl(Mb')$ and $\acl(Mc)= \acl(Mc')$.

We define an ideal $\mu$ on $G \times H$, by saying that $D\in
\mu$ if and only if $\pi_1(D)\in \mu_G$. Then $\mu$ is
$M$-invariant and invariant under left and right translations. We
will refer to $\mu$-wide as ``wide''.

We define the ideal $\lambda$ as the set of subsets $X$ of
$G\times H$ for which the projections to $G$ and $H$ each have
finite fibers. Thus $\lambda$ is included in the ideal of sets that are medium for $\mu$.
As before, a set in $\lambda$ will be called $\lambda$-medium.
Define $\widetilde{p}= \tp(a,a'/M)$. Then, because
$(a,a')$ is inter-algebraic with $a$ over $M$, $\widetilde{p}$ is
wide and medium and Condition (F) holds for $\widetilde{p}$.

We will show that conditions (A) and (B) of Theorem
\ref{th_stabilizer} also hold with the ideals $\mu$ and
$\lambda$\footnote{Notice that the product of two types with
finite fibers does not necessarily have finite fibers.
Hence we do not know that $pp^{-1}$ is medium.
This explains the need for the restrictive hypothesis on $p\times_{nf} p$ in Theorem \ref{th_stabilizer}}.

\begin{claim}
Condition (A) holds: If $p,q$ are two types in $G\times H$ and we
have $(g,h)\models p\times_{nf} q$ such that either $\tp(gh/M)$ or
$\tp(hg/M)$ is $\lambda$-medium, then $p$ is $\lambda$-medium.
\end{claim}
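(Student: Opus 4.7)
The plan is to translate the hypothesis into a transcendence-degree statement using algebraic boundedness, and then deduce the conclusion by a direct dimension count that exploits the definable bijection $(g,h) \leftrightarrow (gh,h)$ together with the coordinate decomposition in $G \times H$. Writing $g = (g_1, g_2)$, $h = (h_1, h_2)$, set $s = \dim(g/M)$, $s_i = \dim(g_i/M)$, $t = \dim(h/M)$, and $d = \dim(gh/M)$, where $\dim$ is transcendence degree in the ambient field. The assumption that $\tp(gh/M)$ is $\lambda$-medium says exactly that $g_1 h_1$ and $g_2 h_2$ are inter-algebraic over $M$, so $\dim(g_i h_i/M) = d$ and $\acl(M, gh) = \acl(M, g_i h_i)$ for $i = 1, 2$. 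The key input I would invoke is that in an algebraically bounded theory non-forking preserves algebraic dimension; together with $(g,h) \models p \nf q$ this gives $\dim(h/M, g) = t$, and hence $\dim(h/M, g_i) = t$ for each $i$ by the squeeze $t = \dim(h/M) \geq \dim(h/M, g_i) \geq \dim(h/M, g) = t$.

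With this in hand, I would compute $\dim(h/M, gh)$ in two different ways. Via the definable bijection $(g,h) \leftrightarrow (gh, h)$, we get $\dim(gh, h/M) = \dim(g, h/M) = s + t$, so $\dim(h/M, gh) = s + t - d$. Alternatively, since $\acl(M, gh) = \acl(M, g_1 h_1)$ we have $\dim(h/M, gh) = \dim(h/M, g_1 h_1)$; and the group identity $g_1 = (g_1 h_1) h_1^{-1}$ yields $\acl(M, h, g_1 h_1) = \acl(M, h, g_1)$, whence $\dim(h, g_1 h_1/M) = \dim(h, g_1/M) = s_1 + \dim(h/M, g_1) = s_1 + t$, and so $\dim(h/M, g_1 h_1) = s_1 + t - d$. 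Equating the two expressions forces $s_1 = s$; running the same argument with $g_2 h_2$ in place of $g_1 h_1$ gives $s_2 = s$. Therefore $g_1$ and $g_2$ are inter-algebraic over $M$, i.e., $p$ is $\lambda$-medium. The case where $\tp(hg/M)$ is $\lambda$-medium is entirely parallel, substituting $g_1 = h_1^{-1}(h_1 g_1)$ for $g_1 = (g_1 h_1) h_1^{-1}$.

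The main obstacle I anticipate is not the combinatorial bookkeeping, which is short once the setup is in place, but the input that non-forking implies dimension independence in the algebraically bounded setting: without it, the squeeze forcing $\dim(h/M, g_i) = t$ collapses and the two computations no longer yield the equality $s_i = s$. This is a standard feature of dimension theory in algebraically bounded theories, but it is the pivotal use of the non-forking hypothesis on $(g,h)$ and should be flagged as such.
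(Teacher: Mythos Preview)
Your argument is correct but considerably more circuitous than the paper's two-line proof. Writing $g = (g_0, g_1)$, $h = (h_0, h_1)$, the paper observes that $\lambda$-mediumness of $gh$ gives $g_0 h_0 \in \acl(M g_1 h_1)$, whence $g_0 = (g_0 h_0) h_0^{-1} \in \acl(M g_1 h_0 h_1)$; since $\tp(h_0 h_1/M g_0 g_1)$ does not fork over $M$, hence not over $M g_1$ by base monotonicity, the elementary fact ``$a \in \acl(Bc)$ and $\tp(c/Ba)$ non-forking over $B$ forces $a \in \acl(B)$'' yields $g_0 \in \acl(M g_1)$ directly. Symmetrically $g_1 \in \acl(M g_0)$, and the claim is proved.

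Your route instead first upgrades the non-forking hypothesis to the global statement $\dim(h/Mg) = \dim(h/M)$ and then runs a transcendence-degree count via the bijection $(g,h) \leftrightarrow (gh,h)$. The ``key input'' you flag is in fact valid, and in any theory rather than only algebraically bounded ones: if some coordinate $g_j$ lay in $\acl(M g' h) \setminus \acl(M g')$ for a subtuple $g' \subseteq g$, then the very same forking-versus-$\acl$ lemma applied with base $Mg'$ would contradict non-forking of $\tp(h/Mg)$ over $M$. So your concern is resolved---but notice that this is precisely the lemma the paper applies once, with base $Mg_1$, to finish immediately. You are first deriving a general independence statement from it and then unwinding through additivity of transcendence degree, whereas the paper's approach buys brevity by applying it exactly where needed and skipping the dimension bookkeeping altogether.
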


\begin{claimproof}
Denote $g=(g_0,g_1)$ and same for $h$. We will prove the case
where we assume that $\tp(gh/M)$ is $\lambda$-medium, the other case is
proved in an analogous way. Since $g_0h_0 \in \acl(Mg_1h_1)$ we
have $g_0 \in \acl(Mg_1h_0h_1)$. As $\tp(h_0h_1/Mg_0g_1)$ does not
fork over $M$, this implies that $g_0 \in \acl(Mg_1)$. In the same
way we get $g_1 \in \acl(Mg_0)$.
 \end{claimproof}

By Lemma \ref{lem_stqr}, condition (B) holds. We can then apply
Theorem \ref{th_stabilizer}, which gives us a connected,  medium,
wide type-definable group $K\leq G\times H$. As $K$ is
$\lambda$-medium, its projections to $G$ and $H$ have finite
fibers. As $K$ is wide, $\pi_1(K)$ is $\mu_G$-wide.

It only remains to show that we may assume that $\pi_1$ is
injective on $K$.

Let $K_1 = \pi_1^{-1}(e)\cap K$. Then $K_1$ is finite and normal
in $K$. As $K$ is connected, $K_1$ is central in $K$ (the
centralizer of $K_1$ is a relatively definable subgroup of $K$ of
finite index). Let $C\leq H$ be the centralizer of $\pi_2(K_1)$
inside $H$. It is an algebraic subgroup of $H$. Then we can
replace $H$ by $C/\pi_2(K_1)$ which is again an algebraic group
(defined over the same parameters as $H$ and $K_1$). Thus we may
assume that $K_1$ is trivial. 

This completes the proof of the theorem.
\end{proof}

An easy corollary of the theorem is the following result which we
believe was not known.

\begin{cor}
Let $R$ be a real closed field and let $G$ be a torsion free
definable group in $R$. Then $G$ is definably isomorphic to a
definable subgroup of an algebraic group.
\end{cor}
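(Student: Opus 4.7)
The plan is to apply Theorem~\ref{algebraic group chunk} to $G$ inside a sufficiently saturated elementary extension $M$ of $R$. I first verify the two structural hypotheses. The theory RCF is algebraically bounded by o-minimal cell decomposition: a formula whose fiber over $\bar x$ is finite decomposes cell-by-cell into finitely many polynomial equations in $y$ with coefficients algebraic in $\bar x$. Moreover, $M$ is definably closed in its field-theoretic algebraic closure $M[i]$, since the non-trivial Galois automorphism $a+bi \mapsto a-bi$ fixes precisely $M$.

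To produce the $M$-invariant, bi-$G$-invariant S1 ideal $\mu_G$ and a wide type $p$ satisfying condition (F), I would use that $G$, being torsion-free and definable in RCF, is definably amenable (torsion-free definable groups in o-minimal expansions of $R$ are solvable by the structure results of Edmundo--Otero and Peterzil--Starchenko, and solvable definable groups in NIP theories admit bi-invariant Keisler probability measures). Taking $\mu_G$ to be the ideal of $\mu$-null definable subsets, the S1 property holds by finite additivity along indiscernible sequences. Any $\mu$-generic type $p$ over $M$ is $\mu_G$-wide, and in the NIP theory RCF, $p$ admits non-forking extensions to any parameter set, so one can find $(a,b) \models p \nf p$ with $\tp(a/Mb)$ also non-forking, establishing condition (F).

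Theorem~\ref{algebraic group chunk} then supplies an $M$-definable algebraic group $H$ and a definable finite-to-one group homomorphism $\phi : D \to H(M)$, for some type-definable $\mu_G$-wide subgroup $D \leq G$. Since $D$ has positive outer $\mu$-measure and $\mu$ is a $G$-invariant probability measure, $D$ has finite index in $G$. Because $G$, and hence $D$, is torsion-free, the finite kernel of $\phi$ is trivial, so $\phi$ is an injective definable group homomorphism onto a subgroup of the algebraic group $H(M)$.

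I expect the main obstacle to be the final two upgrades: extending $\phi$ from a type-definable $D$ to a bona fide definable subgroup, and then onwards to all of $G$. For the type-definable to definable passage, I would use o-minimality together with the finite-to-one nature of $\phi$ to realize its graph inside a definable relation, obtaining a definable subgroup $D' \supseteq D$ of finite index on which $\phi$ extends as an injective homomorphism. For the finite-index gap $[G:D']$, I would absorb it by replacing $H$ with an algebraic group $H'$ constructed as an extension of $H$ by the finite group $G/D'$ (cocycle coming from the conjugation action of $G/D'$ on $\phi(D')$), yielding a definable embedding $G \hookrightarrow H'(M)$, whence $G$ is definably isomorphic to its image, a definable subgroup of the algebraic group $H'$.
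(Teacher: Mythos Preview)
Your setup is largely correct: RCF is algebraically bounded and definably closed in its algebraic closure, torsion-freeness implies solvability and hence definable amenability, and condition (F) holds in any NIP theory. One minor wobble: you assert a \emph{bi}-invariant Keisler probability measure, but definable amenability of a solvable group only hands you a left-invariant one. The paper sidesteps this by taking $\mu_G$ to be the ideal of formulas that do not extend to a strongly bi-f-generic type; this is bi-invariant by construction and S1 (see \cite{CS} and Section~\ref{SGroups}).

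The substantive gap is in your endgame. From ``$D$ has positive outer $\mu$-measure and $\mu$ is a $G$-invariant probability measure'' you conclude that $D$ has \emph{finite} index in $G$. That inference is invalid: a wide type-definable subgroup has \emph{bounded} index (cf.\ Lemma~\ref{lem_bddindex}), and bounded is generally not finite --- think of $G^{00}$ inside a definably compact o-minimal group. Your two proposed upgrades (extend $\phi$ to a definable $D'\supseteq D$ of finite index, then absorb $G/D'$ into an extension $H'$ via a cocycle) therefore rest on a false premise; and even granting finite index, it is not clear why your cocycle extension $H'$ would be an \emph{algebraic} group rather than merely a definable one.

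The fact you are missing, and which the paper uses, is that a torsion-free group $G$ definable in a real closed field satisfies $G=G^{00}$. The wide type-definable subgroup $D$ produced by Theorem~\ref{algebraic group chunk} contains $G^{00}_M$ (either by connectedness of the stabilizer in the proof of that theorem, or because for the bi-f-generic ideal every wide type-definable subgroup contains $G^{00}_M$), so $D=G$ outright. Compactness then makes the finite-to-one homomorphism defined on all of $G$, and torsion-freeness forces the kernel to be trivial. There is no finite-index gap to absorb and no cocycle construction is needed.
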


\begin{proof}
Any torsion free definable group definable in an o-minimal structure is solvable, so it is amenable as
a discrete group, and therefore definably amenable. By results in
\cite{CS} we know that $G$ admits a bi-f-generic type, and if we
define $\mu_G$ as the ideal of formulas which do not extend to
bi-f-generic types, then $\mu_G$ is $M$-invariant for some model
$M$, stable under left and right multiplication. Futhermore
$\mu_G$ is S1 in $G$ and any wide subgroup must contain $G^{00}$
(see Definition \ref{defG00}). We will reprove those facts in the
more general context of NTP$_2$ theories in Section \ref{SGroups}.

Condition (F) holds in any dependent theory, so in particular it
holds for real closed fields.

By Theorem \ref{algebraic group chunk} there is an algebraic group
$H$ and a definable finite-to-one group homomorphism $f$ from a
type definable wide subgroup $D$ of $G$ containing $G^{00}$ to
$H(M)$. But in torsion free groups definable in real closed fields
$G=G^{00}$. It follows by compactness that $D$ can be taken to be
definable. Finally, $ker(f)$ is a finite subgroup of the torsion
free $G$, so $ker(f)=\{e_G\}$ and $f$ is a definable injection, as
required.
\end{proof}

\section{Groups with f-generics in \ntp}\label{SGroups}

In this section we will use Theorem \ref{th_babystab} to prove Theorem \ref{th_stab}, which
is a stabilizer theorem for strong f-generic types in a group $G$
definable in an \ntp
 theory (see Definition \ref{strong f generics}).

 We work here with a complete theory $T$ and let $\mathcal U$ denote a monster model of $T$.

 We recall the definition of NTP$_2$.

\begin{defi}\label{NTP2}
We say that  $\phi(\bar{x}, \bar{y})$ has $TP_2$ if there are $(a_{l j})_{l, j < \omega}$ in $\mathcal U$ and $k \in \omega$ such that:
\begin{enumerate}
    \item $\{\phi(\bar{x}, a_{l, j})_{j \in \omega}\}$ is $k$-inconsistent for all $l< \omega .$
    \item For all $f:\omega \rightarrow \omega, \{\phi(\bar{x}, a_{l, f(l)}): l \in \omega\}$ is consistent.
\end{enumerate}
 A formula $\phi(\bar{x}, \bar{y})$ is \emph{$NTP_2$}  if it does not have $TP_2$.
The theory $T$ is \emph{$NTP_2$} if no formula has $TP_2$.

\end{defi}

We will assume throughout this section that $T$ is NTP$_2$. Let $G$ be a $\emptyset$-definable group.
Recall that an \emph{extension base} is a set $A$ such that no $p\in S(A)$ forks over $A$.
We will use the following results (the first three are from
\cite{ChKa} and the fourth one from \cite{BYC}).

\begin{fact}\label{fact_ntp}
Let $T$ be an NTP$_2$ theory and $A$ an extension base.

\begin{enumerate}
\item For any $b$, there is an $A$-indiscernible sequence
$(b_i:i<\omega)$ in $\tp (b/A)$ such that for any formula
$\phi(x;b)$ which divides over $A$, the partial type
$\{\phi(x;b_i):i<\omega\}$ is inconsistent.

\item A formula forks over $A$ if and only if it divides over $A$.

\item Condition $\mathrm{\mathbf{(F)}}$ is satisfied: given any
type $p$ over $A$, there are $a,b\models p$ such that $\tp(a/Ab)$
and $\tp(b/Aa)$ are non-forking over $A$.

\item The ideal of formulas which do not fork over $A$ is has the S1 property.

\end{enumerate}
\end{fact}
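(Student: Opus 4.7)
The plan is to prove the four assertions in the stated order, with (1) being the technical heart of the proof.

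For (1), I would construct a \emph{strict Morley sequence} in $\tp(b/A)$, following Chernikov--Kaplan. Iteratively pick $b_i \models \tp(b/A)$ so that $\tp(b_i/Ab_{<i})$ is non-forking over $A$ and also ``strict'', meaning the reversed type $\tp(b_{<i}/Ab_i)$ is also non-forking over $A$; existence of such $b_i$ relies on $A$ being an extension base together with a compactness argument. An Erd\H{o}s--Rado extraction then yields an $A$-indiscernible subsequence, still in $\tp(b/A)$. The NTP$_2$ hypothesis is then used to guarantee the inconsistency property: if some formula $\phi(x;b)$ dividing over $A$ had $\{\phi(x;b_i):i<\omega\}$ consistent along this sequence, combining a witness of dividing with the strict Morley sequence would produce a TP$_2$-array, contradicting NTP$_2$.

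For (2), suppose $\phi(x;b)$ forks over $A$, so $\phi(x;b) \vdash \bigvee_{i<n} \psi_i(x;c_i)$ with each $\psi_i(x;c_i)$ dividing over $A$, and take a strict Morley sequence $(b_j, c_{1,j},\ldots,c_{n,j})_{j<\omega}$ in $\tp(b,c_1,\ldots,c_n/A)$ given by (1). If $\phi(x;b)$ did not divide, $\{\phi(x;b_j):j<\omega\}$ would be consistent; any realization satisfies $\bigvee_{i<n}\psi_i(x;c_{i,j})$ for every $j$, so by pigeonhole some fixed $\psi_i$ is realized at infinitely many $j$, contradicting the $k$-inconsistency that (1) imposes on the dividing formula $\psi_i$.

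For (3), take $a\models p$, extend to a strict Morley sequence $(a_i)_{i<\omega}$ in $p$ with $a_0=a$, and set $b=a_1$; then $\tp(b/Aa)$ does not fork by the non-forking Morley property, and strictness gives that $\tp(a/Ab)$ also does not fork over $A$. For (4), the S1 property of the non-forking ideal follows from (2) together with a TP$_2$-style argument: if $(a_j)_{j<\omega}$ is $A$-indiscernible, $\phi(x;a_0)$ does not fork over $A$ but $\phi(x;a_0)\wedge \phi(x;a_1)$ does, then the $A$-indiscernible sequence of pairs witnessing dividing of the conjunction, combined with the non-dividing of $\phi(x;a_0)$ in each projected row, assembles into a TP$_2$ pattern, contradicting NTP$_2$. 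The main obstacle is (1): constructing and verifying the strict Morley sequence is where NTP$_2$ is genuinely required, and assertions (2)--(4) then follow by essentially formal arguments from it.
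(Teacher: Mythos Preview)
The paper does not give its own proof of this statement: it is recorded as a \emph{Fact}, with items (1)--(3) cited from Chernikov--Kaplan and item (4) from Ben~Yaacov--Chernikov. Your outline is a faithful sketch of the arguments in those references---the strict Morley sequence construction for (1), the formal deductions of (2) and (3) from it, and a TP$_2$-array argument for (4)---so there is nothing in the paper to compare against, and your proposal matches the cited sources.
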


\begin{defi}\label{strong f generics}
A global type $p\in S_G(\monster)$ is \emph{strongly (left) f-generic
over $A$} if for all $g\in G(\monster)$, $g\cdot p$ does not fork
over $A$.

It is \emph{strongly bi-f-generic} if for all $g,h\in G(\monster)$, $g\cdot p \cdot h$ does not fork over $A$.
\end{defi}

It is proved in \cite{NIP2} that a definable group in an NIP theory is definably amenable (that is, admits a definable $G$-invariant measure on definable sets) if and only if it admits a strong f-generic type over some model. The theory of definably amenable NIP groups was studied in \cite{NIP1}, \cite{NIP2} and \cite{CS} (amongst other papers). In particular, the paper \cite{CS} characterizes in various ways formulas which extend to strong f-generic types. We generalize here those results to the NTP$_2$ context, assuming that $G$ admits a strong f-generic type. The proofs are very similar to those in \cite{CS}.


First, we generalize Proposition 5.11 (i) of \cite{NIP2}, with essentially the same proof.

\begin{lemme}
If for some model $M$, $G$ admits a strongly f-generic type over $M$, then the same is true over any extension base $A$.
\end{lemme}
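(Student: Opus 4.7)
The plan is to adapt the NIP proof of the analogous Proposition 5.11(i) of \cite{NIP2}: transport $p$ through an automorphism of a larger monster that fixes $A$ pointwise and sends $M$ to a suitably generic conjugate $M^{*} \equiv_A M$. The genericity of $M^{*}$ over $A$ relative to $\mathcal{U}$ will allow non-forking over $M^{*}$ to be transferred down to non-forking over $A$.

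First I would pass to a sufficiently saturated extension $\hat{\mathcal{U}} \succeq \mathcal{U}$. Since $A$ is an extension base, $\tp(M/A)$ admits a non-forking extension to a complete type over $\mathcal{U}$; realize it in $\hat{\mathcal{U}}$ by $M^{*}$, so that $M^{*} \equiv_A M$ and $\tp(M^{*}/\mathcal{U})$ does not fork over $A$. Let $\sigma \in \operatorname{Aut}(\hat{\mathcal{U}}/A)$ with $\sigma(M) = M^{*}$, and set $p^{*} := \sigma(p)$. Since non-forking over the model $M$ implies $M$-invariance, $p$ is $M$-invariant, so $p^{*}$ is $M^{*}$-invariant and extends canonically to a global type $\tilde{p} \in S_G(\hat{\mathcal{U}})$ which remains strongly f-generic over $M^{*}$. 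Set $q := \tilde{p}|_{\mathcal{U}} \in S_G(\mathcal{U})$.

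The claim is that $q$ is strongly f-generic over $A$. Let $h \in G(\mathcal{U})$ and $\phi(x;c) \in h \cdot q$ with $c \in \mathcal{U}$. Since $h \cdot \tilde{p}$ does not fork, hence by Fact \ref{fact_ntp}(2) does not divide, over $M^{*}$, neither does $\phi(x;c)$. One wants to conclude that $\phi(x;c)$ does not divide (equivalently, does not fork) over $A$, using that $\tp(M^{*}/\mathcal{U})$ does not fork over $A$, and in particular $M^{*} \nind_A c$.

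This last step, a form of left transitivity of non-forking in NTP$_2$, is the main obstacle: while standard in simple or NIP theories, it needs care here. The strategy: if $\phi(x;c)$ divided over $A$, pick an $A$-indiscernible sequence $(c_i)_{i<\omega}$ in $\tp(c/A)$ with $c_0 = c$ in $\mathcal{U}$ witnessing $k$-inconsistency of $\{\phi(x;c_i)\}$. Since $M^{*} \nind_A (c_i)$ by our choice of $M^{*}$, a base-change argument combining Fact \ref{fact_ntp}(1) (Kim's lemma for extension bases) with Fact \ref{fact_ntp}(4) (the S1 property of the non-forking ideal) refines $(c_i)$ to an $M^{*}$-indiscernible sequence giving the same $k$-inconsistent pattern for $\phi(x;\cdot)$, contradicting non-dividing of $\phi(x;c)$ over $M^{*}$. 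This refinement, and in particular the preservation of the starting element $c_0 = c$, is the technical heart of the argument.
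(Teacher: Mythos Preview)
Your proposal contains a genuine error: the claim that ``non-forking over the model $M$ implies $M$-invariance'' is false in NTP$_2$ theories. This holds in NIP, but already fails in simple unstable theories (e.g.\ in the random graph, a global type can specify an edge to a point outside $M$ and still be non-forking over $M$, yet it is not $M$-invariant). Without invariance you have no canonical extension of $p^{*}$ from $\sigma(\mathcal U)$ to $\hat{\mathcal U}$, and it is not clear how to produce an extension $\tilde p$ that remains strongly f-generic over $M^{*}$; the compactness argument one would like to run needs saturation on the correct side and is itself delicate.

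The paper's proof sidesteps this entirely by adding a new sort $S$, a principal homogeneous space for $G$ carrying all $G$-invariant relations. The point is that ``$G$ admits a strongly f-generic type over $A$'' becomes simply ``the formula $x_S=x_S$ does not fork over $A$'' in the expanded (conservative, still NTP$_2$) structure. One then picks $M'\equiv_A M$ with $\tp(M'/\tilde N)$ non-forking over $A$ (using that $A$ is an extension base in the original structure), realizes the $S$-sort non-forking over $M'$, and concludes by a single use of transitivity of non-forking. No specific type is ever transported, extended, or assumed invariant.

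Even granting the extension of $p^{*}$, your endgame amounts to left transitivity of non-forking in NTP$_2$ over an extension base (from $M^{*}\ind_A c$ and $d\ind_{M^{*}} c$ to $d\ind_A c$). Your sketch of this via refining an $A$-indiscernible witness of dividing to an $M^{*}$-indiscernible one, while ``preserving the starting element $c_0=c$'', does not work as written: passing to an $M^{*}$-indiscernible subsequence cannot in general keep $c$ as the first term, and without that you have only shown non-dividing of some conjugate formula. The correct route is via strict Morley sequences as in \cite{ChKa}, which is exactly the ``transitivity of non-forking'' the paper invokes in one line. The homogeneous-space reformulation reduces the whole lemma to that single clean step.
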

\begin{proof}
We expand the structure by adding a new sort $S$ which, as a set,
is a copy of the group $G$ and we put all $G$-invariant relations
on it. So $S$ becomes a homogeneous space for $G$ and any point of
$S$ gives rise to a definable bijection between $S$ and $G$. This
expanded structure is $NTP_2$, and is conservative: it does not
add any definable sets to the main sort. Given any $A\subseteq
\monster$, there is a strongly f-generic type over $A$ if and only
if the formula $x_S = x_S$ in the expanded structure does not fork
over $A$. (See \cite[Proposition 5.11]{NIP2} or \cite[Lemma
8.19]{NIPbook}.)

Now assume that $x_S = x_S$ does not fork over some $M\subseteq \monster$ and let $A\subseteq \monster$ be an extension base. Let $\tilde N$ be an $|M|^+$-saturated model of the expanded theory containing $A$.

\begin{claim} In this expansion, the type $\tp(M/A)$ does not fork over $A$.
\end{claim}

\begin{claimproof} Assume it did. Then by definition it implies a disjunction of formulas, each dividing over $A$. As the expansion is conservative, we may assume that those formulas have parameters in the main sort. But then we can forget about the additional sort and use the fact that $\tp(M/A)$ does not fork over $A$ in the original structure as $A$ is an extension base.
\end{claimproof}

There is therefore $M'\equiv_A M$ such that $\tp(M'/\tilde N)$ does not fork over $A$. By assumption, there is some $d\in S$ such that $\tp(d/M'\tilde N)$ does not fork over $M'$. Then by transitivity of non-forking, $\tp(d/\tilde N)$ does not fork over $A$ as required.
\end{proof}


\begin{lemme}\label{lem_biexist}
Let $A\subseteq N$, where $N$ is $|A|^+$-saturated. Assume that
$p\in S(\monster)$ is strongly f-generic over $A$. Let $a\models
p|_N$ and $b\models p|_{N a}$. Then $\tp(ba^{-1}/N)$ extends to a
global type, strongly bi-f-generic over $A$.
\end{lemme}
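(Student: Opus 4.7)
The plan is to construct the global type $q$ as $\tp(c^*/\monster)$, where $c^* := b^* a^{-1}$ with $b^* \in \monster'$ (a larger monster containing $\monster$) chosen to realize $p$, i.e., $\tp(b^*/\monster) = p$. I would then establish the two required properties in turn.

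First, to verify $q|_N = \tp(ba^{-1}/N)$: the restriction of $\tp(b^*/\monster) = p$ to $Na$ is $p|_{Na}$, which equals $\tp(b/Na)$ by hypothesis. Hence $\tp(b^*/Na) = \tp(b/Na)$, and right-multiplication by the fixed element $a^{-1} \in Na$ is an $Na$-definable bijection, so $\tp(b^*a^{-1}/Na) = \tp(ba^{-1}/Na)$, and the claim follows upon restriction to $N$.

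Second, to verify strong bi-f-genericity, fix $g, h \in \monster$ and set $m := a^{-1}h \in \monster$, so that $gc^*h = gb^* m$. The type $\tp(gb^*/\monster) = gp$ does not fork over $A$ by strong f-genericity of $p$, and I need to upgrade this to non-forking of $\tp(gb^*m/\monster)$. For any formula $\phi(x, \bar d)$ in this latter type with $\bar d \in \monster$, we have $\phi(gym, \bar d) \in p$, hence the formula $\phi(gym, \bar d)$ (in the variable $y$, with parameters $g, m, \bar d \in \monster$) does not fork over $A$. I would then transfer this to non-forking of $\phi(x, \bar d)$: assuming for contradiction that $\phi(x, \bar d)$ divides over $A$ via some $A$-indiscernible sequence $(\bar d_i)_i$ starting with $\bar d$, I would use a Ramsey extraction to produce an $A \cup \{gm\}$-indiscernible sequence $(\bar d'_i)_i$ of the same EM-type over $A$, so that $((g, m, \bar d'_i))_i$ is $A$-indiscernible; combined with an appropriate $A$-automorphism to align the initial tuple and the observation that the map $y \mapsto gym$ is a bijection (so $k$-inconsistency transfers between $\{\phi(x, \bar d'_i)\}_i$ and $\{\phi(gym, \bar d'_i)\}_i$), this yields a witness to the dividing of $\phi(gym, \bar d)$ over $A$, contradicting its non-dividing. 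Here one uses Fact \ref{fact_ntp}(2) (forking equals dividing over the extension base) and the S1 property of the non-forking ideal (Fact \ref{fact_ntp}(4)) to handle the parameters $g, m$ uniformly.

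The main obstacle is precisely this transfer step: since right-translation by an element of $\monster$ is not a priori known to preserve non-forking over the smaller set $A$, the Ramsey-based argument must be executed with care, in particular to reconcile the initial element of the extracted indiscernible sequence with the specific parameters $(g, m, \bar d)$ while invoking strong f-genericity for all $g' \in G(\monster)$ with $g' \equiv_A g$ to close the loop.
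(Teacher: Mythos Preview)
Your construction has a genuine gap. The type $q=\tp(b^*a^{-1}/\monster)$ you build is exactly the right translate $p\cdot a^{-1}$, and for $q$ to be strongly bi-f-generic over $A$ you would need in particular that $q\cdot h = p\cdot(a^{-1}h)$ is non-forking over $A$ for every $h\in G(\monster)$. Since $h\mapsto a^{-1}h$ is a bijection on $G(\monster)$, this is exactly the statement that $p$ is strongly \emph{right}-f-generic over $A$. But only left-f-genericity is assumed, and the whole point of the lemma is to manufacture a bi-f-generic type from a merely left-f-generic one. So your candidate $q$ need not satisfy the conclusion, and the ``transfer step'' you flag as the main obstacle is not a technicality to be patched with Ramsey and S1 but the entire content of the lemma. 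Your sketch of that step also does not close: after extracting over $Agm$ and applying an $A$-automorphism you land at a formula $\phi(g'ym';\bar d)$ with new $g',m'$, and you have no reason to know this formula lies in $p$.

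The paper avoids this by never constructing the global extension explicitly. It first works over $N$: for $g,h\in G(N)$ one has $\tp(gb/Na)$ and $\tp(ha/N)$ both non-forking over $A$ (using strong f-genericity of $p$ and that $g,h\in N$), and then transitivity of non-forking gives that $\tp(gba^{-1}h^{-1}/N)$ does not fork over $A$. This shows $\tp(ba^{-1}/N)$ is ``bi-f-generic at the level of $N$''. The extension to a global strongly bi-f-generic type is then obtained by compactness and $|A|^+$-saturation of $N$: being strongly bi-f-generic over $A$ is a closed $A$-invariant condition, and any finite piece of it can be pulled down into $N$ where it is already witnessed. The key point your approach misses is that the eventual global bi-f-generic extension is not, in general, a translate of $p$.
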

\begin{proof}
Let $g,h\in G(N)$. Then $\tp(gb/Na)$ does not fork over $A$ and neither does $\tp(ha/N)$. By transitivity of non-forking, $\tp(gb,ha/N)$ does not fork over $A$. Hence $\tp(gba^{-1}h^{-1}/N)$ does not fork over $A$. Since $g,h$ were arbitrary in $G(N)$, this shows that $\tp(ba^{-1}/N)$ is strongly bi-f-generic over $A$.

Since $N$ is $|A|^+$-saturated, $\tp(ba^{-1}/N)$ extends to a global type strongly bi-f-generic over $A$. (This is a closed condition and any finite part of it can be dragged down into $N$.)
\end{proof}

We will say that the group \emph{$G$ has strong f-generics} if it has a strongly f-generic type over some/any extension base. By Lemma \ref{lem_biexist} it would then also have a strong bi-f-generic type over any extension base.

\begin{defi}
Let $\phi(x)\in L(A)$ be a formula. We say that \emph{$\phi(x)$ is f-generic over $A$} if no (left) translate of $\phi(x)$ forks over $A$. We say that \emph{$\phi(x)$ $G$-divides over $A$} if for some $A$-indiscernible sequence $(g_i:i<\omega)$ of elements of $G$, the partial type $\{g_i\cdot \phi(x):i<\omega\}$ is inconsistent.
\end{defi}

\begin{lemme}\label{lem_fund1}
Let $A$ be an extension base and $\phi(x)\in L(A)$. Then $\phi(x)$ is f-generic over $A$ if and only if it does not $G$-divide over $A$.
\end{lemme}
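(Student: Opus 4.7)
The plan is to deduce both directions directly from Fact \ref{fact_ntp}(2), which asserts that over an extension base in an NTP$_2$ theory, forking coincides with dividing. Unpacking the two definitions, $\phi(x)$ being f-generic over $A$ says precisely that no translate $g\cdot\phi(x)$ forks (equivalently, divides) over $A$, whereas $G$-dividing of $\phi$ is almost, by inspection, the statement that some translate divides over $A$ via an indiscernible sequence sitting inside $G$. The task is then to make these matches precise.

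For the forward direction I would argue by contraposition. Suppose $\phi$ $G$-divides over $A$, witnessed by an $A$-indiscernible sequence $(g_i)_{i<\omega}$ of elements of $G$ with $\{g_i\cdot\phi(x):i<\omega\}$ inconsistent. Writing $\phi(x)=\psi(x,a)$ with $a\in A$, the translate $g\cdot\phi(x)$ is $\psi(g^{-1}x,a)$, which I view as a formula $\chi(x;g)$ whose only non-$A$ parameter is $g$. Since all the $g_i$ are $A$-conjugate to $g_0$, the sequence $(g_i)$ witnesses that $\chi(x;g_0)=g_0\cdot\phi(x)$ divides over $A$, hence forks over $A$. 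Thus $\phi$ is not f-generic over $A$.

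For the converse, assume $\phi$ is not f-generic over $A$, so some translate $g\cdot\phi(x)$ forks over $A$. By Fact \ref{fact_ntp}(2), $g\cdot\phi(x)$ in fact divides over $A$, so unwinding dividing applied to the formula $\chi(x;g)$ above produces an $A$-indiscernible sequence $(g_i)_{i<\omega}$ with $g_i\equiv_A g$ such that $\{\chi(x;g_i):i<\omega\}=\{g_i\cdot\phi(x):i<\omega\}$ is inconsistent. Since $G$ is $\emptyset$-definable and $g\in G$, each $g_i$ lies in $G$, so this sequence witnesses that $\phi$ $G$-divides over $A$.

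There is no substantive obstacle: the content of the lemma is almost entirely a reorganisation of definitions, with the only non-trivial ingredient being the NTP$_2$ equivalence of forking and dividing over an extension base.
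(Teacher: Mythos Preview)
Your proof is correct and follows essentially the same argument as the paper: both directions are obtained by contraposition, using that (i) a $G$-dividing witness $(g_i)_{i<\omega}$ immediately shows the translate $g_0\cdot\phi(x)$ divides (hence forks) over $A$, and (ii) if some translate forks over $A$ then by Fact~\ref{fact_ntp}(2) it divides, and the resulting $A$-indiscernible sequence in $\tp(g/A)$ lies in $G$ and witnesses $G$-dividing. The paper's proof is just a terser version of what you wrote, with the two implications presented in the opposite order.
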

\begin{proof}
If for some $g\in G$, $\phi(g^{-1}x)$ forks over $A$, then it divides over $A$ and there is an $A$-indiscernible sequence $(g_i:i<\omega)$ such that $\{\phi(g_i^{-1} x):i<\omega\}$ is inconsistent. This shows that $\phi(x)$ $G$-divides over $A$. Conversely, if $\phi(x)$ $G$-divides over $A$ as witnessed by $(g_i:i<\omega)$, then $\phi(g_0^{-1}x)$ divides over $A$.
\end{proof}

Let $A\subseteq B$ be two extension bases over which $\phi(x;a)$ is defined.
Then $\phi(x;a)$ $G$-divides over $A$ if and only if it $G$-divides over $B$ so the same is true for f-generic. From now on, we drop the ``over $A$'' when talking about f-generic formulas.

\begin{lemme}\label{lem_genfork}
Assume that the formula $\phi(x;b)$ forks over $A$ and that $\tp(g/Ab)$ does not fork over $A$. Then $\phi(gx;b)$ forks over $A$.
\end{lemme}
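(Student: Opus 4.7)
The plan is to argue by contradiction. Suppose $\phi(gx;b)$ does not fork over $A$. Then there exists some $c$ with $c \models \phi(gx;b)$---equivalently $gc \models \phi(x;b)$---such that $\tp(c/Agb)$ does not fork over $A$, i.e.\ $c \ind_A gb$. By the basic properties of non-forking (enlarging the base from $A$ to $Ag$), this in particular yields $c \ind_{Ag} b$.

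Combining $c \ind_{Ag} b$ with the hypothesis $g \ind_A b$ via the (left) chain rule / transitivity of non-forking---which is known to hold in NTP$_2$ theories over extension bases, as part of the standard Chernikov--Kaplan machinery underlying Fact~\ref{fact_ntp}---we conclude that $\tp(c,g/Ab)$ does not fork over $A$. Since the group operation is $A$-definable, $gc \in \dcl(c,g)$, and hence $\tp(gc/Ab)$ does not fork over $A$ either. But $\phi(x;b) \in \tp(gc/Ab)$, so $\phi(x;b)$ itself does not fork over $A$, contradicting the hypothesis.

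The only non-trivial input, and the main potential obstacle, is the invocation of the left chain rule for non-forking in NTP$_2$ over extension bases. A more self-contained alternative would instead argue via indiscernible sequences: apply Fact~\ref{fact_ntp}(1) to produce a universal dividing witness $(b_i)_{i<\omega}$ with $b_0 = b$ and $\{\phi(x;b_i)\}_{i<\omega}$ inconsistent; then use that $\tp(g/Ab)$ does not fork over $A$, together with an Erd\H{o}s--Rado extraction on a sufficiently long extension of the sequence, to arrange---after replacing $g$ by an $Ab$-conjugate---that $(b_i)_{i<\omega}$ is $Ag$-indiscernible; the sequence $((g,b_i))_{i<\omega}$ is then $A$-indiscernible starting with $(g,b)$, while $\{\phi(gx;b_i)\}_{i<\omega}$ is inconsistent (any common realization $c$ would yield $gc$ as a common realization of $\{\phi(x;b_i)\}_{i<\omega}$). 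This would directly witness that $\phi(gx;b)$ divides, hence forks, over $A$.
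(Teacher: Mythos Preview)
Your main argument is correct and essentially identical to the paper's: both proceed by contraposition, pick $c\models\phi(gx;b)$ with $c\ind_A gb$, pass to $c\ind_{Ag} b$ by base monotonicity, combine with $g\ind_A b$ via transitivity of non-forking to obtain $gc\ind_A b$, and conclude that $\phi(x;b)$ does not fork over $A$. The paper simply writes ``by transitivity'' for the step you single out as the left chain rule; your added remarks and alternative indiscernible-sequence sketch are extra commentary rather than a different approach.
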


\begin{proof}
Assume that $\phi(gx;b)$ does not fork over $A$ and let $c\models \phi(gx;b)$ with $c\ind_A Abg$. Then $c\ind_{Ag} Abg$. We also have $g \ind_A Ab$ by hypothesis. By transitivity, $gc \ind_A Ab$. Since $gc \models \phi(x;b)$, we get that $\phi(x;b)$ does not fork over $A$.
\end{proof}

\begin{prop}\label{prop_fund2}
Let $A$ be an extension base, $A\subseteq B$ and $\phi(x)\in L(B)$. Let $q$ be a global type strongly f-generic over $A$ and $g\models q|_B$. Then $\phi(x)$ extends to a global type strongly f-generic over $A$ if and only if $g^{-1}\cdot \phi(x)$ does not fork over $A$.
\end{prop}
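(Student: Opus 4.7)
The forward direction is immediate: if $p\in S_G(\monster)$ is strongly f-generic over $A$ and contains $\phi$, then by definition $g^{-1}\cdot p$ does not fork over $A$, and it contains $g^{-1}\cdot\phi$, so $g^{-1}\cdot\phi$ does not fork over $A$.

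For the converse, I will proceed by contradiction via a compactness reformulation. Let $F$ denote the set of formulas $\chi(x)\in L(\monster)$ such that $h\cdot\chi$ forks over $A$ for some $h\in G(\monster)$. A global type $p$ containing $\phi$ is strongly f-generic over $A$ precisely when $p\cap F=\emptyset$, so $\phi$ extends to a global strongly f-generic type over $A$ if and only if the partial type $\{\phi(x)\}\cup\{\neg\chi(x):\chi\in F\}$ is consistent. Assuming the contrary, compactness yields $h_1,\ldots,h_n\in G(\monster)$ and formulas $\chi_i(x;d_i)$ with each $h_i\cdot\chi_i$ forking over $A$ and $\phi(x)\vdash\bigvee_{i=1}^n\chi_i(x;d_i)$.

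The core step is to show that each $g^{-1}\cdot\chi_i$ forks over $A$: applying $g^{-1}$ to the displayed implication then gives $g^{-1}\cdot\phi\vdash\bigvee_i g^{-1}\cdot\chi_i$, so $g^{-1}\cdot\phi$ forks over $A$, contradicting the hypothesis. Whether $g^{-1}\cdot\phi$ forks over $A$ depends only on $\tp(g,b/A)$, where $b$ enumerates the parameters of $\phi$, and this type is common to all realizations of $q|_B$; hence I may freely replace $g$ by a realization of $q$ over $C:=B\cup\{h_i,d_i:i\leq n\}$. For such $g$, since $h_i\in C$, we have $\tp(h_ig/C)=(h_i\cdot q)|_C$, which does not fork over $A$ by strong f-genericity of $q$. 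Applying Lemma~\ref{lem_genfork} to the forking formula $\chi_i(h_i^{-1}y;d_i)$ and the element $h_ig$ yields that $\chi_i(h_i^{-1}\cdot h_ig\cdot x;d_i)=\chi_i(gx;d_i)=(g^{-1}\cdot\chi_i)(x)$ forks over $A$, as required. The main subtlety is exactly this change of base for $g$: the given $g\models q|_B$ need not be generic enough over the $h_i,d_i$ to invoke Lemma~\ref{lem_genfork} directly, but invariance of the forking status of $g^{-1}\cdot\phi$ under replacements of $g$ within $q|_B$ resolves the issue.
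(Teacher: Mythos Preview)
Your proof is correct and follows essentially the same route as the paper's: the same compactness/contrapositive reduction, the same replacement of $g$ by a realization of $q$ over the larger parameter set, and the same appeal to Lemma~\ref{lem_genfork}. Your version is actually a bit more careful, since you explicitly justify the replacement of $g$ via the observation that the forking status of $g^{-1}\cdot\phi$ over $A$ depends only on $\tp(g,b/A)$, whereas the paper simply asserts ``we can assume that $g$ realizes $q$ over $Bb\{g_i\}_{i<n}$.''
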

\begin{proof}
Assume that $\phi(x)$ does not extend to a global type strongly f-generic over $A$. Then there are elements $g_i$, $i<n$ in $G(\monster)$ and formulas $\phi_i(x;b)\in L(\monster)$ each forking over $A$ such that $\phi(x) \vdash \bigvee_{i<n} \phi_i(g_i x; b)$. We can assume that $g$ realizes $q$ over $Bb\{g_i\}_{i<n}$. We have then that $\phi(gx)\vdash \bigvee_{i<n} \phi_i(g_ig x; b)$. Now, $\tp(g_ig/Ab)$ does not fork over $A$  for each $i<n$. By Lemma \ref{lem_genfork}, this implies that $\phi_i(g_ig x;b)$ forks over $A$. Hence $\phi(gx)=g^{-1}\cdot \phi(x)$ forks over $A$.

Conversely, if $\phi(x)$ extends to some global type strongly f-generic over $A$, then no translate of $\phi(x)$ forks over $A$ and in particular $g^{-1}\phi(x)$ does not fork over $A$.
\end{proof}

The previous results combine into the following equivalences.

\begin{prop}\label{prop_fund}
Let $A$ be an extension base and assume that there is a global
type $q$ strongly f-generic over $A$. Let $\phi(x)\in L(A)$ and
let $g$ realize $q$ over $A$. The following are equivalent:

1. $\phi(x)$ is f-generic;

2. $\phi(x)$ does not $G$-divide over $A$;

4. $g^{-1}\cdot \phi(x)$ does not fork over $A$;

5. $\phi(x)$ extends to a global type strongly f-generic over $A$.
\end{prop}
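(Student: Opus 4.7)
\textbf{Proof plan for Proposition \ref{prop_fund}.} The plan is to note that most of the work has already been done in the preceding two results, and only the two ``bridges'' between them need to be supplied, both of which are immediate from the definitions.

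First, the equivalence \mbox{(1) $\Leftrightarrow$ (2)} is exactly the statement of Lemma \ref{lem_fund1}, so nothing more needs to be said there. Second, the equivalence \mbox{(4) $\Leftrightarrow$ (5)} is exactly Proposition \ref{prop_fund2} applied with $B = A$ (note that since $\phi(x) \in L(A)$ and $g$ realizes $q$ over $A$, this instance of the proposition directly gives: $\phi(x)$ extends to a global type strongly f-generic over $A$ if and only if $g^{-1}\cdot\phi(x)$ does not fork over $A$).

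What is left is to connect the pair $\{(1),(2)\}$ with the pair $\{(4),(5)\}$. For \mbox{(5) $\Rightarrow$ (1)}: if $\phi(x)$ extends to a global type $p$ strongly f-generic over $A$, then by definition every left translate $h \cdot p$ is non-forking over $A$ for $h\in G(\monster)$; since $h\cdot\phi(x)$ lies in $h\cdot p$, we get that $h\cdot\phi(x)$ does not fork over $A$ for every such $h$, which is precisely the assertion that $\phi$ is f-generic. Conversely, for \mbox{(1) $\Rightarrow$ (4)}: if $\phi(x)$ is f-generic, then by definition no left translate of $\phi(x)$ forks over $A$; taking the particular translate by $g^{-1}$, we obtain that $g^{-1}\cdot\phi(x)$ does not fork over $A$, as required.

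Together these yield the cycle \mbox{(1) $\Leftrightarrow$ (2)}, \mbox{(1) $\Rightarrow$ (4) $\Leftrightarrow$ (5) $\Rightarrow$ (1)}, establishing that all four conditions are equivalent. I do not anticipate any obstacle: the proposition is essentially a repackaging of Lemma \ref{lem_fund1} and Proposition \ref{prop_fund2}, with the only subtlety being to keep straight which direction of the left-translation convention is being used when passing between ``f-generic'' and ``$g^{-1}\cdot\phi(x)$ non-forking''.
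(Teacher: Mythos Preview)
Your proposal is correct and matches the paper's approach: the paper itself gives no explicit proof, merely stating that ``the previous results combine into the following equivalences,'' and you have spelled out exactly the obvious bridges (\mbox{(5) $\Rightarrow$ (1)} and \mbox{(1) $\Rightarrow$ (4)}) needed to link Lemma~\ref{lem_fund1} and Proposition~\ref{prop_fund2}.
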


As usual, we extend definitions from definable sets to types: we
define a type to be \emph{f-generic} if it contains only f-generic
formulas. Notice that, because each definable subset of an
f-generic type may witness f-genericity in a different model, not
all f-generic types are strongly f-generic.

\begin{prop}\label{prop_fundweak}
Let $A$ be an extension base and assume that there is a global f-generic type $q$. Let $\phi(x)\in L(A)$ and let $g$ realize $q$ over $A$. Then $\phi(x)$ is f-generic if and only if $g^{-1}\cdot \phi(x)$ does not fork over $A$.
\end{prop}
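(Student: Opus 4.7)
The plan is to adapt the compactness argument from the proof of Proposition \ref{prop_fund2} to our weaker hypothesis that $q$ is only f-generic, rather than strongly f-generic.

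The forward direction is immediate from the definition: if $\phi(x)$ is f-generic over $A$, then by definition no translate of $\phi$ forks over $A$, and $g^{-1}\cdot\phi$ is such a translate.

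For the converse, suppose $g^{-1}\cdot\phi$ does not fork over $A$ and assume, toward a contradiction, that $\phi$ is not f-generic. Following the compactness step used at the start of the proof of Proposition \ref{prop_fund2}, one obtains elements $g_1,\ldots,g_n\in G(\mathcal{U})$ and formulas $\phi_i(x;b)\in L(\mathcal{U})$, each forking over $A$, with $\phi(x)\vdash \bigvee_{i\le n}\phi_i(g_ix;b)$. Since the property ``$g^{-1}\phi$ does not fork over $A$'' depends only on $\tp(g/A)$, we may replace $g$ by any realization of $q|_A$, and in particular arrange that $g\models q|_{A\cup\{b,g_1,\ldots,g_n\}}$. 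Substituting $gx$ for $x$ gives $\phi(gx)\vdash\bigvee_i\phi_i(g_igx;b)$; as $\phi(gx)$ does not fork over $A$, at least one $\phi_i(g_igx;b)$ also does not fork over $A$. By Lemma \ref{lem_genfork}, if $\tp(g_ig/Ab)$ is non-forking over $A$, then $\phi_i(g_igx;b)$ forks over $A$, yielding the required contradiction.

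Hence the whole argument reduces to showing that $\tp(g_ig/Ab)$ does not fork over $A$. This is the step where Proposition \ref{prop_fund2} uses the full strength of strong f-genericity (namely that every translate $g_i\cdot q$ is non-forking over $A$). Here we argue instead that the element $g_ig$ realizes the translated type $g_i\cdot q$ over $Ab$, and every formula of $g_i\cdot q$ is a translate of a formula in $q$, which is f-generic by hypothesis; by definition of f-genericity for a formula, translates do not fork over the relevant base, and using the monotonicity of non-forking together with the assumption that $A$ is an extension base, this transfers to non-forking over $A$.

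The main obstacle is precisely this last step: f-genericity of a type $q$ only guarantees that each formula of $q$ is f-generic over \emph{some} base (possibly varying from formula to formula), and care is needed to ensure that the resulting non-forking behavior for $g_i\cdot q$ restricts correctly to non-forking over our fixed base $A$. Once this is handled (by a routine base-change bookkeeping using that non-forking is preserved under restriction of the base), the contradiction is obtained exactly as in Proposition \ref{prop_fund2}.
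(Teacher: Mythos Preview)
Your proposal has a genuine gap at exactly the step you flag as ``routine base-change bookkeeping''. You need $\tp(g_ig/Ab)$ to be non-forking over $A$. From f-genericity of $q$ you only get that each formula $\chi(g_ix;a,b)\in q$ is f-generic, which tells you that its translate $\chi(x;a,b)$ does not fork over any extension base \emph{containing its parameters} $\{a,b,g_i\}$. But you need non-forking over the smaller set $A$, and monotonicity of non-forking goes the other way: non-forking over $Abg_i$ does not imply non-forking over $A$. This is precisely why Proposition~\ref{prop_fund2} needs \emph{strong} f-genericity---so that $g_i\cdot q$ is globally non-forking over the fixed base $A$---and the gap cannot be closed by bookkeeping.

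The paper's argument takes a different route that avoids this obstacle entirely. Rather than trying to show that translates of $q$ are non-forking over $A$, it uses the $G$-dividing characterization (Lemma~\ref{lem_fund1}): assume $\phi$ $G$-divides, witnessed by an $A$-indiscernible sequence $(g_i)_{i<\omega}$ with $\{g_i^{-1}\phi\}$ $k$-inconsistent. The key observation is that $\bigcup_i g_i^{-1}\cdot(q|_A)$ is consistent, since otherwise some $\psi\in q|_A$ would $G$-divide over $A$, contradicting f-genericity of $q$. Taking $h$ realizing this union, each $g_ih$ realizes $q|_A$, and the family $\{(g_ih)^{-1}\phi\}$ is still $k$-inconsistent; extracting an $A$-indiscernible sequence from $(g_ih)$ then shows $g^{-1}\phi$ divides over $A$ for some (hence any) $g\models q|_A$. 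The point is that f-genericity is used only to guarantee consistency of the simultaneous translates of $q|_A$, not to control forking of translates over the small base $A$.
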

\begin{proof}
If $\phi(x)$ is f-generic, then $g^{-1}\cdot \phi(x)$ does not fork over $A$ by definition.

Conversely, assume that $\phi(x)$ does $G$-divide and let $(g_i:i<\omega)$ be an $A$-indiscernible sequence witnessing it. Let $\hat q = q|_A$.

\begin{claim} The partial type $\bigcup g_i^{-1}\cdot \hat q$ is consistent.
\end{claim}

\begin{claimproof}
If not, then there is a formula $\psi(x)\in \hat q(x)$ such that $\{g_i^{-1}\cdot \psi(x):i<\omega\}$ is inconsistent. Then $g_0^{-1}\cdot \psi(x)$ divides over $A$, contradicting the assumption on $q$.
\end{claimproof}

Let $h$ realize $\bigcup g_i^{-1}\cdot \hat q$, so $g_i\cdot h \models \hat q$ for each $i$. Notice that $\{h^{-1}g_i^{-1} \cdot \phi(x):i<\omega\}$ is still $k$-inconsistent for some $k$, and $g^{-1}\cdot \phi(x)$ divides over $A$ as required.
\end{proof}

\begin{cor}
Assume that there is a global f-generic type, then the family $\mu$ of non-f-generic formulas is an ideal.
\end{cor}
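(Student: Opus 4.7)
The plan is to use Proposition \ref{prop_fundweak} as a dictionary: after translating by a realization of the global f-generic type $q$, f-genericity of a formula over an extension base $A$ becomes plain non-forking over $A$. Since the non-forking family over a fixed $A$ is a Boolean ideal, and the map $\phi \mapsto g^{-1}\cdot \phi$ commutes with disjunction and implication, the ideal structure transfers to $\mu$.

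More concretely, given any finitely many formulas one wishes to test, first choose an extension base $A$ containing all the relevant parameters (for instance a small model) and let $g$ realize $q|_A$. By Proposition \ref{prop_fundweak}, a formula $\phi\in L(A)$ lies in $\mu$ if and only if $g^{-1}\cdot \phi$ forks over $A$; by the remark following Lemma \ref{lem_fund1} the choice of $A$ does not affect whether $\phi$ is f-generic, so we may freely enlarge $A$ to accommodate several formulas at once and use the same $g$ throughout.

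Closure under subformulas: if $\phi_1\vdash \phi_2$ and $\phi_2\in \mu$, then $g^{-1}\phi_2$ forks over $A$, hence implies a finite disjunction of formulas each dividing over $A$; since $g^{-1}\phi_1\vdash g^{-1}\phi_2$, the very same disjunction is implied by $g^{-1}\phi_1$, so $g^{-1}\phi_1$ forks over $A$ and $\phi_1\in \mu$. Closure under finite unions: if $\phi_1,\phi_2\in \mu$, concatenating the witness disjunctions for the forking of $g^{-1}\phi_1$ and $g^{-1}\phi_2$ yields a single disjunction of dividing formulas implied by $g^{-1}(\phi_1\vee \phi_2)=g^{-1}\phi_1\vee g^{-1}\phi_2$, so $\phi_1\vee \phi_2\in \mu$. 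The inconsistent formula lies in $\mu$ trivially. Once Proposition \ref{prop_fundweak} is available the argument is essentially bookkeeping; there is no serious obstacle.
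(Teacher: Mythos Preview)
The proposal is correct and follows essentially the same route as the paper: translate via Proposition~\ref{prop_fundweak} to reduce f-genericity to non-forking over a fixed base, and then use that forking formulas form an ideal. A minor difference is that the paper invokes forking $=$ dividing over a model (Fact~\ref{fact_ntp}(2)) for the disjunction step, whereas you argue directly from the definition of forking by concatenating the witnessing disjunctions; both are fine, and your version is marginally more elementary.
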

\begin{proof}
Let $q$ be a global f-generic type. Let $\phi(x)$ and $\psi(x)$
be non-f-generic and take $M$ a model over which both are defined.
Let $g\models q|_M$ as in the previous proposition. Then
$g^{-1}\cdot \phi(x)$ and $g^{-1}\cdot \psi(x)$ both fork over
$M$, hence so does $g^{-1}\cdot (\phi(x)\vee \psi(x))$ --as
forking equals dividing over $M$-- which implies that $\phi(x)\vee
\psi(x)$ is not f-generic.
\end{proof}

\begin{question}
Assume that there is a global f-generic type; is there a strongly
f-generic type?
\end{question}

Notice that the ideal $\mu$ of non-f-generic formulas is $\emptyset$-invariant and invariant by translations on the left and on the right. It is however not S1 in general. For this we have to work with $\mu_A$.

Assume that $G$ has a strong f-generic type over $A$. Let $\mu_A$ be the ideal of formulas $\phi(x)\in L(\monster)$ which do not extend to a global type strongly f-generic over $A$. Then $\mu_A$ is $A$-invariant, left-$G$-invariant over $A$. By Proposition \ref{prop_fund}, $\mu$ and $\mu_A$ agree on $L(A)$.

\begin{lemme}\label{lem_s1}
The ideal $\mu_A$ is S1.
\end{lemme}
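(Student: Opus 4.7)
The plan is to reduce S1 for $\mu_A$ to S1 for the non-forking ideal over $A$ (Fact \ref{fact_ntp}(4)), using Proposition \ref{prop_fund2} to translate between the two notions. Given an $A$-indiscernible sequence $(a_i)_{i<\omega}$, by the contrapositive of the S1 definition and by the fact that indiscernibility makes the two ``some/all'' clauses equivalent, it suffices to prove: if $\phi(x,a_0)$ extends to a global type strongly f-generic over $A$, then so does $\phi(x,a_0)\wedge\phi(x,a_1)$.

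Fix a global type $q$ strongly f-generic over $A$ (which exists by hypothesis). The key maneuver is to simultaneously have a realization $g\models q$ and a subsequence of $(a_i)$ which is indiscernible over $Ag$, while $g$ is independent (in the sense of $q$) from that subsequence. First extend $(a_i)_{i<\omega}$ to an $A$-indiscernible sequence $(a_i)_{i<\kappa}$ with $\kappa$ large enough for an Erd\H{o}s-Rado argument, then pick $g\models q|_{A(a_i)_{i<\kappa}}$, and finally extract via Ramsey and Erd\H{o}s-Rado an $\omega$-subsequence $(a'_i)_{i<\omega}$ which is $Ag$-indiscernible. Because $(a'_i)$ is a subsequence of an $A$-indiscernible sequence, it is itself $A$-indiscernible with the same $A$-EM-type as $(a_i)$, and $g$ still realizes $q|_{A(a'_i)_{i<\omega}}$.

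Because $a'_0\equiv_A a_0$ and being $\mu_A$-wide is preserved by $A$-automorphisms (strong f-genericity over $A$ is an $A$-invariant property of global types), $\phi(x,a'_0)$ is still $\mu_A$-wide. Proposition \ref{prop_fund2} then gives that $\phi(gx,a'_0)=g^{-1}\cdot\phi(x,a'_0)$ does not fork over $A$. Since $(a'_i)$ is $Ag$-indiscernible, the enriched sequence $((g,a'_i))_i$ is $A$-indiscernible, and so Fact \ref{fact_ntp}(4) applied to the $\emptyset$-definable formula $\chi(x;u,y):=\phi(u\cdot x,y)$ with parameters $(g,a'_i)$ yields that $\phi(gx,a'_0)\wedge\phi(gx,a'_1)$ does not fork over $A$. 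Invoking Proposition \ref{prop_fund2} in the reverse direction, $\phi(x,a'_0)\wedge\phi(x,a'_1)$ extends to a global type strongly f-generic over $A$. Since $(a'_0,a'_1)\equiv_A (a_0,a_1)$, transporting by an $A$-automorphism shows the same for $\phi(x,a_0)\wedge\phi(x,a_1)$, completing the proof.

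The main obstacle is the extraction step: we must realize $g$ \emph{before} selecting the subsequence, so that non-forking between $g$ and the final indices is automatic, while still securing $Ag$-indiscernibility of the extracted subsequence. Once this is arranged, everything reduces to invoking Fact \ref{fact_ntp}(4) and sandwiching it between two applications of Proposition \ref{prop_fund2}.
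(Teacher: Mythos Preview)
Your proof is correct and follows essentially the same route as the paper's: pick $g$ realizing the strongly f-generic $q$ over the long sequence, use Erd\H{o}s--Rado to extract an $Ag$-indiscernible subsequence, translate $\mu_A$-wideness into non-forking via Proposition \ref{prop_fund2} (the paper cites Proposition \ref{prop_fund}, which is the same statement), invoke S1 of the forking ideal (Fact \ref{fact_ntp}(4)), and translate back. Your write-up is a bit more explicit than the paper's---you spell out that $((g,a'_i))_i$ is the $A$-indiscernible sequence to which S1 of forking is applied, and you record the final automorphism transport---but there is no substantive difference in strategy.
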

\begin{proof}
Assume that $(a_i:i<\kappa)$ is an $A$-indiscernible sequence such
that $\phi(x;a_i)$ extends to a type strongly f-generic over $A$.
Let $q$ be strongly f-generic over $A$ and let $g$ realize $q$
over $Aa_{<\kappa}$.  We can suppose that $\kappa$ is large
enough, then by Erd\H os Rado, there is a subsequence
$(a_{i_j})_{j<\omega}$ indiscernible over $Ag$. By Proposition
\ref{prop_fund} $g^{-1}\cdot \phi(x;a_{i_j})$ is non-forking over
$A$ for all $j$. As the non-forking ideal is S1 in \ntp~theories,
also $g^{-1}\cdot (\phi(x;a_{i_0})\wedge \phi(x;a_{i_1}))$ is
non-forking over $A$. By Proposition \ref{prop_fund},
$\phi(x;a_{i_0})\wedge \phi(x;a_{i_1})$ is $\mu_A$-wide.
\end{proof}



\subsection{Stabilizers of strong f-generic types}

We will need the following definitions.

\begin{defi}\label{defG00}
  Let $G$ be a definable group, and $M$ be a model over which $G$ is definable.

  We will say that a subset $X\subset G$ is \emph{generic} if finitely many translates cover $G$.

  If $H$ is a type definable (with parameters in $M$) subgroup of $G$ (or more generally an automorphism invariant subgroup),
  we will say that $H$ has \emph{bounded index in $G$} if
  we have that the cardinality of $G(M^*)/H(M^*)$ is smaller than the cardinality of $M^*$ for some saturated model $M^*$ extending
  $M$.

  Finally, we define $G^{00}_M$ to be the smallest type definable over $M$ subgroup of bounded index and  we define
  $G^{\infty}_M$ to be the smallest $M$-invariant subgroup of $G$ of bounded index.
\end{defi}

\begin{lemme}
Let $X$ be an f-generic definable set. Then $XX^{-1}$ is generic.
\end{lemme}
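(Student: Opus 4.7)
The plan is to prove the contrapositive: assume $XX^{-1}$ is not generic and derive that $X$ $G$-divides over an extension base, contradicting Lemma \ref{lem_fund1}.

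Fix a (small) model $M$ containing parameters for $X$, so that $M$ is an extension base. If $XX^{-1}$ is not generic, then for every finite $F\subseteq G$ we have $F\cdot XX^{-1}\neq G$, so I can build inductively a sequence $(h_i)_{i<\kappa}$ in $G$ of any prescribed length with
\[
h_i\notin h_j\cdot XX^{-1}\qquad\text{for all }j<i.
\]
The key observation is that $h_i\in h_j\cdot XX^{-1}$ is equivalent to $h_iX\cap h_jX\neq\emptyset$ (if $h_i=h_jxy^{-1}$ with $x,y\in X$ then $h_iy=h_jx\in h_iX\cap h_jX$, and conversely). Thus the constructed sequence satisfies $h_iX\cap h_jX=\emptyset$ for all $j<i$.

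Taking $\kappa$ large enough and applying Erd\H{o}s--Rado, extract an $M$-indiscernible subsequence $(h_i')_{i<\omega}$. The formula $h_0'X\cap h_1'X=\emptyset$ is preserved by indiscernibility for every pair of indices, so the partial type $\{h_i'\cdot X:i<\omega\}$ is $2$-inconsistent. By definition this means $X$ $G$-divides over $M$. Since $M$ is an extension base, Lemma \ref{lem_fund1} implies that $X$ is not f-generic, contradicting the hypothesis.

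I do not expect any substantive obstacle: the argument is a direct translation, in the language of $G$-dividing, of the classical fact that for a generic-like set $X$ the difference set $XX^{-1}$ must be generic. The only point requiring the \ntp~hypothesis is the invocation of Lemma \ref{lem_fund1}, where forking and dividing are identified on an extension base.
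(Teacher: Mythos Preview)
Your proof is correct and essentially the same as the paper's: both hinge on the observation that $bX\cap aX=\emptyset$ iff $b\notin aXX^{-1}$, so that f-genericity bounds the length of any sequence of pairwise disjoint translates. The paper argues directly---pick a maximal finite family $(a_iX)_{i<n}$ of pairwise disjoint translates (which exists by f-genericity) and observe that then $\bigcup_{i<n} a_iXX^{-1}=G$---whereas you run the contrapositive through an Erd\H{o}s--Rado extraction and Lemma~\ref{lem_fund1}, but the content is identical.
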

\begin{proof}
Let $(a_i:i<n)$ be a maximal sequence such that the sets $(a_i X:i<n)$ are disjoint, which must exist by f-genericity of $X$. Take any $b\in G$. Then for some $i<n$, $bX \cap a_i X \neq \emptyset$. Hence $b\in a_i XX^{-1}$ and $\bigcup_{i<n} a_i XX^{-1} = G$.
\end{proof}

\begin{lemme}\label{lem_bddindex}
Let $H<G$ be a type-definable group. Assume that $H$ is $\mu$-wide ({\it i.e.}, every definable set containing it is $\mu$-wide), then $H$ has bounded index.
\end{lemme}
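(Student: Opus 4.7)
The plan is to argue by contradiction: assuming $H$ has unbounded index, I will produce an $M$-indiscernible sequence whose translates of a definable neighbourhood of $H$ exhibit a failure of the S1 property of $\mu_M$ from Lemma~\ref{lem_s1}. First I pick a model $M$ containing the parameters over which $H$ is type-definable; since models are extension bases in NTP$_2$ theories, the standing hypothesis that $G$ has strong f-generics yields a strongly f-generic type over $M$, so the ideal $\mu_M$ is well-defined. Write $H = \bigcap_{n<\omega} H_n$ with each $H_n \in L(M)$ symmetric and $H_{n+1} \cdot H_{n+1} \subseteq H_n$. Since $H \subseteq H_n$ and $H$ is $\mu$-wide, each $H_n$ is $\mu$-wide, hence $\mu_M$-wide by Proposition~\ref{prop_fund} (which identifies the two ideals on $L(M)$). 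By the left $G(\mathcal{U})$-invariance of $\mu_M$, every translate $aH_n$ is also $\mu_M$-wide.

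Next, unbounded index gives, in a sufficiently saturated model, arbitrarily many pairwise distinct cosets $gH$, from which Erd\H{o}s--Rado extracts an $M$-indiscernible sequence $(a_i)_{i<\omega}$ in $G$ with $a_i^{-1} a_j \notin H$ for all $i \neq j$. Since $H = \bigcap_n H_n$, some $H_n$ excludes $a_0^{-1}a_1$; by indiscernibility $a_i^{-1} a_j \notin H_n$ for all $i < j$, and the relation $H_{n+1}\cdot H_{n+1} \subseteq H_n$ together with symmetry of $H_{n+1}$ then forces the cosets $(a_i H_{n+1})_{i<\omega}$ to be pairwise disjoint: if $a_i h_1 = a_j h_2$ with $h_1,h_2 \in H_{n+1}$ then $a_j^{-1}a_i = h_2 h_1^{-1} \in H_n$, a contradiction.

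To finish, let $\phi(x;y)$ be the formula $x \in y \cdot H_{n+1}$. The first paragraph shows each $\phi(x;a_i)$ is $\mu_M$-wide, whereas $\phi(x;a_i) \wedge \phi(x;a_j)$ defines the empty set and therefore lies in $\mu_M$ for $i \neq j$. Since $(a_i)$ is $M$-indiscernible, this contradicts the S1 property of $\mu_M$ from Lemma~\ref{lem_s1}, completing the proof. The main subtlety will be the extraction of the indiscernible sequence of pairwise-distinct cosets from the assumption of unbounded index; this is a standard but slightly delicate compactness plus Erd\H{o}s--Rado argument, and everything else is bookkeeping around the two ideals $\mu$ and $\mu_M$.
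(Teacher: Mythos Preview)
Your proof is correct and takes a genuinely different route from the paper's.

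The paper's argument is shorter and leans on the preceding lemma: for any definable $X\supseteq H$ it finds $Y\supseteq H$ with $YY^{-1}\subseteq X$; since $Y$ is f-generic, that lemma gives that $YY^{-1}$, hence $X$, is generic. Bounded index then follows from the (standard but unstated) fact that a type-definable subgroup all of whose definable neighbourhoods are generic has bounded index. Your argument bypasses the $XX^{-1}$ lemma entirely and instead contradicts the S1 property of $\mu_M$ directly via an indiscernible sequence of pairwise disjoint translates. This is more self-contained with respect to the earlier lemma, at the cost of invoking Lemma~\ref{lem_s1} and doing the extraction by hand. Both approaches ultimately need the same kind of pigeonhole to pass from ``distinct cosets of $H$'' to ``pairwise disjoint translates of some fixed $H_{n}$''; the paper hides it inside the folklore implication, while you surface it explicitly.

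One point to tighten: Erd\H os--Rado applied to a sequence $(g_\alpha)_{\alpha<\kappa}$ of distinct-coset representatives does not automatically yield an indiscernible sequence still in distinct cosets, since ``$x^{-1}y\notin H$'' is an infinite disjunction and need not survive passage to the EM-type. The clean fix is to first colour pairs $\{\alpha,\beta\}$ by the least $n$ with $g_\alpha^{-1}g_\beta\notin H_n$, use Erd\H os--Rado for pairs with $\aleph_0$ colours to get a large monochromatic subset with common witness $n_0$, and only then extract the $M$-indiscernible sequence; the single formula $x^{-1}y\notin H_{n_0}$ is then in the EM-type and is preserved. You flag this step as ``slightly delicate'', which is fair, but it would be worth saying explicitly that this two-stage extraction is what is meant.
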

\begin{proof}
Let $X$ be a definable set containing $H$. Then there is a definable set $Y$ containing $H$ such that $YY^{-1} \subseteq X$. By hypothesis, $Y$ is f-generic and the previous lemma implies that $YY^{-1}$ is generic and therefore $X$ is generic.
\end{proof}

In the following statement, $\mu_M$ is the ideal of formulas which do not extend to a global type, strongly f-generic over $M$. 

\begin{thm}\label{th_stab}\
Assume that $G$ has strong f-generics. Let $p\in S_G(M)$ be f-generic.

Then $G^{00}_M=G^{\infty}_M=St_{\mu_M}(p)^2=(pp^{-1})^2$ and $G^{00}_M\setminus St_{\mu_M}(p)$ is contained in a union of non-wide $M$-definable sets.
\end{thm}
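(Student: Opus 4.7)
The plan is to apply Theorem \ref{th_babystab} to the ideal $\mu_M$ and the f-generic type $p$, and then translate the resulting stabilizer into $G^{00}_M$ and $G^{\infty}_M$. The hypotheses of Theorem \ref{th_babystab} are routine: $\mu_M$ is $M$-invariant and left-$G$-invariant; by Lemma \ref{lem_s1} it is S1 on all of $G$; by Proposition \ref{prop_fund}, every formula in $p$ extends to a global strongly f-generic type over $M$, so $p$ is $\mu_M$-wide; and hypothesis (B1) holds trivially, with $X=Y\cup Y^{-1}$ for any $Y\in p$, since the S1 property is global. Theorem \ref{th_babystab} then yields a connected, $\mu_M$-wide, type-definable subgroup $S=Stab_{\mu_M}(p)=St_{\mu_M}(p)^2=(pp^{-1})^2$, and already delivers the final clause of the theorem since $\mu_M$ and $\mu$ agree on $L(M)$ by Proposition \ref{prop_fund}.

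The identification $S=G^{00}_M$ is then immediate from Lemma \ref{lem_bddindex}: all definable supersets of $S$ lie in $L(M)$, so $\mu_M$-wideness of $S$ coincides with $\mu$-wideness, hence $S$ has bounded index in $G$ and $G^{00}_M\leq S$; connectedness of $S$ together with the fact that $G^{00}_M$ is a type-definable-over-$M$ subgroup of $S$ of bounded index force equality.

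The equality $G^{00}_M=G^{\infty}_M$ is the most delicate step and I expect it to be the main obstacle. My plan is first to establish normality of $G^{\infty}_M$ in $G$ by a standard intersection argument: the conjugate $gG^{\infty}_M g^{-1}$ depends only on the coset $gG^{\infty}_M$, so $\bigcap_{g\in G}gG^{\infty}_M g^{-1}$ is a bounded intersection of $M$-invariant bounded-index subgroups, hence is itself such, and minimality forces equality with $G^{\infty}_M$. Next, using that $g\in St_{\mu_M}(p)$ exactly when there exists $a\models p$ with $\tp(a/Mg)$ $\mu_M$-wide and $ga\models p$, I would argue that for any $M$-invariant bounded-index subgroup $H$ the elements $a$ and $ga$ must lie in the same coset of $H$, so that $g=ga\cdot a^{-1}\in aHa^{-1}=H$ by normality; this yields $St_{\mu_M}(p)\subseteq H$, hence $S=St_{\mu_M}(p)^2\leq G^{\infty}_M$, closing the loop $G^{\infty}_M\leq G^{00}_M=S\leq G^{\infty}_M$. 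The ``same coset'' step is the real content: in NIP it follows from uniqueness of non-forking extensions, but in the NTP$_2$ setting one must instead exploit the $\mu_M$-wideness of $\tp(a/Mg)$ together with f-genericity of $p$.
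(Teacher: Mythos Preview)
Your argument is correct, and the first two paragraphs match the paper's proof exactly. But you have inverted where the difficulty lies. The step you flag as ``the most delicate'' and ``the real content'' is in fact immediate, and the paper handles it in one sentence: since $M$ is a model, any two realizations $a,b$ of the complete type $p\in S(M)$ have the same Lascar strong type over $M$, and hence $ab^{-1}\in G^{\infty}_M$ (a standard fact about $G^{\infty}$, valid in any theory: on an $M$-indiscernible sequence the right $G^{\infty}_M$-cosets must all coincide by pigeonhole and $M$-invariance). Thus $pp^{-1}\subseteq G^{\infty}_M$, so $S=(pp^{-1})^2\leq G^{\infty}_M$ directly. Combined with $G^{\infty}_M\leq G^{00}_M\leq S$ (the last inclusion from Lemma~\ref{lem_bddindex}), this closes the chain of equalities without invoking connectedness of $S$ a second time.

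In particular, your normality argument for $G^{\infty}_M$ is correct but superfluous, and the ``same coset'' step requires neither $\mu_M$-wideness, nor f-genericity of $p$, nor anything NTP$_2$-specific: it is pure group-and-type theory over a model. Your remark that ``in NIP it follows from uniqueness of non-forking extensions'' suggests you were reaching for machinery where none is needed; the paper's phrase is simply ``$p$ knows in which $G^{\infty}_M$ coset it lies''.
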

\begin{proof}
The ideal $\mu_M$ is $G$-invariant (by left multiplication), $M$-invariant and S1 on $G$ by Lemma \ref{lem_s1}. We can apply Theorem \ref{th_babystab} with hypothesis (B1) to deduce that $S=(pp^{-1})^2$ is a wide subgroup. As $p$ knows in which $G^{\infty}_M$ coset it lies, we must have $S\leq G^{\infty}_M$. On the other hand, by Lemma \ref{lem_bddindex}, $S$ has bounded index, hence $G^{00}_M\leq S$. It follows that those three subgroups are equal. The last statement also follows from Theorem \ref{th_babystab}.
\end{proof}

\subsection{Definably amenable groups}

A definable group $G$ is \emph{definably amenable} if for some (equiv. any) model $M$, there is a left-invariant Keisler measure on $M$-definable subsets of $G$. (See e.g. \cite[Chapter 8]{NIPbook}.)

\begin{fait}[\cite{NIPbook}, Lemma 7.5]\label{fact_meas}
Let $\mu$ be a measure over $M$ and $(b_i : i<\omega)$ an indiscernible sequence in $M$. Let $\phi(x;y)$ be a formula and $r>0$ such that $\mu(\phi(x;b_i))\geq r$ for all $i<\omega$. Then the partial type $\{ \phi(x;b_i) : i<\omega \}$ is consistent.
\end{fait}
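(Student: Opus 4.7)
The plan is to argue by contradiction: assume the partial type $\{\phi(x;b_i):i<\omega\}$ is inconsistent, and derive from the lower bound $\mu(\phi(x;b_i))\geq r>0$ a contradiction with finite additivity of $\mu$.

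First I would upgrade inconsistency to $k$-inconsistency. By compactness, if the partial type is inconsistent then for some finite $k$ the formula $\bigwedge_{i<k}\phi(x;b_i)$ is inconsistent. Since $(b_i)_{i<\omega}$ is indiscernible over $M$ and $\phi$ is over $M$, the same is true of $\bigwedge_{i\in S}\phi(x;b_i)$ for every $S\subseteq\omega$ with $|S|=k$. Hence the family $\{\phi(x;b_i):i<\omega\}$ is $k$-inconsistent.

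The main step is a double-counting argument using only finite additivity and the fact that $\mu$ is a probability measure. Fix any $n\geq k$ and set $A_i=\phi(x;b_i)$ for $i<n$. For every $S\subseteq\{0,\dots,n-1\}$ let
\[
B_S \;=\; \bigcap_{i\in S} A_i \;\cap\; \bigcap_{j\notin S}\neg A_j,
\]
so the $B_S$ are pairwise disjoint definable sets partitioning the ambient space, and $A_i=\bigsqcup_{S\ni i}B_S$. By $k$-inconsistency, $B_S=\emptyset$ whenever $|S|\geq k$. Then by finite additivity
\[
\sum_{i<n}\mu(A_i) \;=\; \sum_{i<n}\sum_{S\ni i}\mu(B_S) \;=\; \sum_{S}|S|\,\mu(B_S) \;\leq\;(k-1)\sum_{S}\mu(B_S)\;\leq\; k-1.
\]
On the other hand by hypothesis $\sum_{i<n}\mu(A_i)\geq nr$. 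Choosing $n$ with $nr>k-1$ gives the desired contradiction.

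I do not expect a serious obstacle here; the only subtle point is the passage from "inconsistent" to "$k$-inconsistent", which is exactly where the indiscernibility hypothesis is used, and the counting/partition step, which is a standard trick for finitely additive measures on Boolean algebras of definable sets and does not require the measure to come from an integral.
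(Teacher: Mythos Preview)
Your proof is correct and is essentially the standard argument from the cited reference \cite{NIPbook}; the paper itself does not prove this statement but merely quotes it as a fact. One small slip: you write that $(b_i)$ is ``indiscernible over $M$'' and that $\phi$ is ``over $M$'', but since the $b_i$ lie in $M$ this would force them all equal---what is meant (and what you actually use) is that the sequence is indiscernible over the parameters of $\phi$, so that inconsistency of one $k$-fold conjunction propagates to every $k$-element subset.
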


\begin{prop}
Let $G$ be a definably amenable \ntp~group, then $G$ has strong f-generics.
\end{prop}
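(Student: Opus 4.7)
The plan is to distill from the given $G(M)$-invariant Keisler measure $\mu$ a global type $p \in S_G(\mathcal U)$ whose every left translate avoids forking over $M$. Concretely, I take $p$ in the support of a well-chosen global extension of $\mu$.

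The main step is to extend $\mu$ (defined on $L(M)$-formulas) to a global Keisler measure $\tilde\mu$ on $L(\mathcal U)$-formulas in the $G$-sort, with two properties: $\tilde\mu$ is left $G(\mathcal U)$-invariant, and $\tilde\mu$ is $M$-invariant (in the sense that $\tilde\mu(\phi(x;b)) = \tilde\mu(\phi(x;b'))$ whenever $b \equiv_M b'$). This extension is produced by a standard compactness and convexity argument on the space of Keisler measures extending $\mu$, along the lines of Chapter~8 of \cite{NIPbook}; notice that Fact~\ref{fact_meas} is stated without an NIP hypothesis, so nothing in the extension argument requires NIP. This will be the principal technical obstacle of the proof.

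Once $\tilde\mu$ is available, I pick any complete type $p \in S_G(\mathcal U)$ in the topological support of $\tilde\mu$, which is nonempty since $\tilde\mu$ is a nonzero regular Borel measure on the compact Stone space $S_G(\mathcal U)$. Every formula $\phi \in p$ then satisfies $\tilde\mu(\phi) > 0$, and by left $G(\mathcal U)$-invariance every left translate $g \cdot \phi$ with $g \in G(\mathcal U)$ also has $\tilde\mu(g\cdot\phi) > 0$.

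The final step converts positive $\tilde\mu$-measure into non-forking over $M$. Given any $\phi(x) = \varphi(x;b) \in g\cdot p$ with $\varphi \in L(M)$ and $b$ a tuple from $\mathcal U$, let $(b_i)_{i<\omega}$ be an $M$-indiscernible sequence with $b_0 = b$. By $M$-invariance, $\tilde\mu(\varphi(x;b_i)) = \tilde\mu(\varphi(x;b)) > 0$ for all $i$, so Fact~\ref{fact_meas} yields consistency of $\{\varphi(x;b_i) : i<\omega\}$. Hence $\phi$ does not divide over $M$, and Fact~\ref{fact_ntp}(2) (identifying forking with dividing over extension bases in NTP$_2$) gives that $\phi$ does not fork over $M$. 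Since this holds for every $g \in G(\mathcal U)$ and every $\phi \in g \cdot p$, the type $p$ is strongly f-generic over $M$, as required. The hard part is the first step; all subsequent manipulations are routine given Fact~\ref{fact_meas} and the behaviour of forking in NTP$_2$.
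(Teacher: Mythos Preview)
There is a genuine gap in your main step. You assert that $\mu$ extends to a global Keisler measure $\tilde\mu$ that is simultaneously left $G(\mathcal U)$-invariant and $M$-invariant, and you say this is a ``standard compactness and convexity argument'' as in Chapter~8 of \cite{NIPbook}. But the constructions of $M$-invariant global extensions in that chapter (via smooth extensions, or equivalently via Borel-definability) depend on NIP; outside NIP, non-forking and $M$-invariance are distinct notions for measures, and there is no general reason a measure over a model should admit an $M$-invariant global extension, let alone one that is also $G$-invariant. Your remark that Fact~\ref{fact_meas} carries no NIP hypothesis is beside the point: that fact concerns consistency along an indiscernible sequence once the measure is already given, not the existence of an $M$-invariant extension.

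The paper sidesteps this by aiming for a weaker property: a $G$-invariant global extension $\tilde\mu$ such that every formula of positive $\tilde\mu$-measure does not fork over $M$. This is enough, since any type in the support of such a $\tilde\mu$ is then strongly f-generic. To get it, one argues by contradiction: if no $G$-invariant extension has this property, compactness produces $\epsilon>0$ and finitely many forking formulas $\phi_i(x;d)$ such that every $G$-invariant extension gives some $\phi_i(x;d)$ measure $>\epsilon$. The key ingredient you are missing is Fact~\ref{fact_ntp}(1): one chooses a single $M$-indiscernible sequence $(d_j)$ in $\tp(d/M)$ that simultaneously witnesses dividing for each $\phi_i$. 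The failure condition is $\mathrm{Aut}(N/M)$-invariant, so it holds with $d$ replaced by each $d_j$; pigeonhole then gives one $i$ with $\tilde\mu(\phi_i(x;d_j))>\epsilon$ for infinitely many $j$, contradicting Fact~\ref{fact_meas}. Your final paragraph, which derives non-dividing from $M$-invariance via an arbitrary indiscernible sequence, cannot replace this, because without $M$-invariance you have no control over the measures along the sequence, and with only non-forking you are assuming what you want to prove.
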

\begin{proof}
Fix a model $M$ and $\mu$ a $G$-invariant measure on $M$-definable
sets. Let $M\prec^{+} N$, and notice that it is enough to show
that $\mu$ extends to a measure over $N$ which is both
$G$-invariant and non-forking over $M$ (a type of positive
$\mu$-measure would be strong f-generic over $M$). So assume this
is not the case. By compactness, there are $\epsilon
>0$ and finitely many formulas $\phi_i(x;d)$, $i<n$, each forking
over $M$ such that any $G$-invariant extension $\tilde \mu$ of
$\mu$ satisfies $\bigvee_{i<n} \tilde \mu(\phi_i(x;d))>\epsilon$.
Take $(d_j:j<\omega)$ an indiscernible sequence in $\tp(d/M)$
which witnesses dividing as given by Fact \ref{fact_ntp}, (1). The
condition that $\tilde \mu$ extends $\mu $ and is $G$-invariant is
invariant under $Aut(N/M)$, therefore for every $j$, we also have
$\bigvee_{i<n} \tilde \mu(\phi_i(x;d_j))> \epsilon$. So up to
taking a subsequence, for some $i<n$, we have
$\bigwedge_{j<\omega} \tilde \mu(\phi_i(x;d_j))>\epsilon$. But
this contradicts Fact \ref{fact_meas} and the property of
$(d_j)_{j<\omega}$.
\end{proof}

\begin{cor}
Any solvable or pseudofinite \ntp group has strong f-generics.
\end{cor}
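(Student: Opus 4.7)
The plan is to reduce to the preceding proposition, which states that every definably amenable \ntp group admits strong f-generics. Thus it suffices to verify that both solvable groups and pseudofinite groups definable in an \ntp theory are definably amenable.

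For a solvable definable group $G$, I would invoke the classical fact that every solvable group is amenable as an abstract discrete group (amenability passes through group extensions, and abelian groups are amenable). The resulting left-invariant finitely additive probability measure on the Boolean algebra of \emph{all} subsets of $G(M)$ restricts to a left-invariant Keisler measure on the Boolean subalgebra of $M$-definable subsets of $G$, yielding definable amenability.

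For a pseudofinite definable group $G$, the approach is to transfer the normalized counting measure through a pseudofinite presentation: $G$ is elementarily equivalent to an ultraproduct $\prod_{\mathcal U} G_i$ of finite groups, each carrying its normalized counting measure, which is left-invariant since translation is a cardinality-preserving bijection. Any $M$-definable subset of $G$ corresponds to an ultraproduct of definable subsets $X_i \subseteq G_i$, and the ultralimit $\lim_{\mathcal U} |X_i|/|G_i|$ defines a left-invariant Keisler measure on $G$; pulling back through the elementary equivalence gives definable amenability.

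Neither step presents a real obstacle; the only point deserving a brief comment is that ``solvable'' refers here to the abstract group structure and that abstract amenability is strictly stronger than definable amenability, so the restriction to definable sets is immediate. Once definable amenability is established in both cases, the preceding proposition delivers the conclusion.
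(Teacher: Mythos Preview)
Your proposal is correct and follows exactly the approach implicit in the paper: the corollary is stated without proof because it follows immediately from the preceding proposition once one observes that solvable groups (being amenable as discrete groups) and pseudofinite groups (via the ultralimit of normalized counting measures) are definably amenable. The paper even spells out the solvable case explicitly elsewhere (in the proof of the corollary on torsion-free groups in real closed fields), using precisely your reasoning.
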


\section{PRC fields}\label{SPRC}

In this section we will give all the preliminaries in pseudo real closed fields that are required throughout the paper. The reader can see \cite{Pre}, \cite{Ba1}, \cite{Jarden} and \cite{Mon} for more details.
We give a useful description of definable sets which is more precise in the case of more variables that the description given in \cite{Mon}.

\begin{defi}
A field $M$ of characteristic zero \emph{is pseudo real closed
(PRC)} if $M$ is existentially closed (relative to the language of
rings) in every totally real regular extension $N$ of $M$.
Equivalently, if given any absolutely irreducible variety $V$
defined over $M$, if $V$ has a simple $\overline{M}^r-$rational
point for every real closure $\overline{M}^r$ of $M$, then $V$ has
an $M$-rational point.
\end{defi}

Prestel showed in Theorem 4.1 of \cite{Pre} that the class of PRC
fields is axiomatizable in the language of fields.

We have the following properties of PRC fields.

\begin{fact}\label{PRCcaracte}
Let $M$ be a PRC field.
\begin{enumerate}
    \item \emph{\cite[Proposition 1.4]{Pre}} If $<$ is an order on $M$, then $M$ is dense in $(\overline{M}^r, \overline{<}^r)$, the real closure of $M$ respect to the order $<$.
    \item \emph{\cite[Proposition 1.6]{Pre}} If $<_i$ and $<_j$ are different orders on $M$, then $<_i$ and $<_j$ induce different topologies.
    \end{enumerate}
\end{fact}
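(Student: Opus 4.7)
For part (1), fix $\alpha \in \overline{M}^r \setminus M$ and $\epsilon > 0$ in $M$; the goal is to find $c \in M$ with $|c - \alpha| < \epsilon$ in the order $<$. My plan is to use the variety formulation of PRC: I construct an absolutely irreducible $M$-variety $V$ whose $M$-rational points produce elements close to $\alpha$. Let $f(x) \in M[x]$ be the minimal polynomial of $\alpha$, which is separable since $\operatorname{char} M = 0$. A natural candidate starts from the Newton-type curve $f(x) = t \cdot f'(x)$ (rational in $x$, hence absolutely irreducible; its points with small $|t|$ are forced to lie near a simple root of $f$), intersected with a smallness constraint $t^2 + u^2 = \delta$ where $\delta \in M$ is a small nonzero sum of squares. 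The PRC criterion then yields an $M$-rational point, and $c$ is extracted as its $x$-coordinate.

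The main technical obstacle is ensuring that this variety has smooth rational points in \emph{every} real closure $\overline{M}^r_j$ of $M$, not merely in the one containing $\alpha$. This forces $\delta$ to be a sum of squares (positive in every ordering of $M$) and requires that $f(x) = t f'(x)$ have real solutions in each $\overline{M}^r_j$; the latter may be enforced by replacing $f$ with $f(x) \cdot x$ or by multiplying by an odd-degree polynomial to guarantee a universal real root, at the cost of additional bookkeeping to preserve absolute irreducibility. Separating $\alpha$ from the remaining real roots of $f$ in $<$ is then achieved by shrinking $\delta$ according to root-separation bounds computed from the $M$-coefficients of $f$ and its resolvents.

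For part (2), suppose two distinct orderings $<_1$ and $<_2$ of $M$ induce the same topology $\tau$, and pick $a \in M$ with $a >_1 0$ and $a <_2 0$. The square root $\sqrt{a}$ exists in $\overline{M}^r_1$, and by part (1) applied to $<_1$, combined with continuity of squaring in the order topology, for every $<_1$-neighborhood $U$ of $a$ in $M$ there exists $b \in M$ with $b^2 \in U$. Since $\tau_1 = \tau = \tau_2$, the same holds for every $<_2$-neighborhood of $a$. Taking $U = \{x \in M : x <_2 0\}$, which is a $\tau_2$-open neighborhood of $a$ disjoint from all squares in $M$, yields the desired contradiction.
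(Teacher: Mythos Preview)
The paper does not supply its own proof of this fact; it is stated as a citation to Prestel (\cite[Propositions 1.4, 1.6]{Pre}). So there is no in-paper argument to compare against, and your proposal must be assessed on its own.

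Your argument for part (2) is correct: density of $M$ in $\overline{M}^r_1$ produces elements of $M$ whose squares approximate $a$ in the $<_1$-topology, and if $\tau_1=\tau_2$ this forces a square to land in the $<_2$-negative half-line, which is impossible.

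Your argument for part (1), however, has a genuine gap. The overall plan---build an absolutely irreducible $M$-variety with smooth points in every real closure, then invoke the PRC axiom---is the right one, and the Newton curve $f(x)=t f'(x)$ together with the circle $t^2+u^2=\delta$ is a natural first attempt. The problem is the last sentence: ``Separating $\alpha$ from the remaining real roots of $f$ in $<$ is then achieved by shrinking $\delta$.'' Shrinking $\delta$ forces the $x$-coordinate of any $M$-point to lie near \emph{some} real root of the modified polynomial (one of the roots of $f$, or the artificially introduced root at $0$), but it does nothing to select $\alpha$ among them. The PRC axiom gives existence of an $M$-rational point; it gives no control over which $<$-connected component of $V(\overline{M}^r)$ that point comes from. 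Root-separation bounds from the discriminant tell you the components are far apart, not which one is hit. Since $f$ is irreducible over $M$, you also cannot isolate $\alpha$ by an $M$-definable interval without already knowing density. This is precisely the difficulty that makes Prestel's Proposition 1.4 non-trivial, and your sketch does not resolve it. A correct argument must encode, algebraically over $M$, enough information to pin down the specific root---for instance by tracking sign changes of $f$ on an interval with $M$-rational endpoints obtained from a prior reduction, or by working with the full tuple of roots simultaneously.
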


In this section we are interested in the class of bounded PRC fields. A field $M$
is \emph{bounded} if for any integer $n$, $M$ has finitely many
extensions of degree $n$. This implies in particular that all the
orders which make $M$ into an ordered field are definable (\cite[Lemma 3.5]{Mon}), and that there are finitely many of those.

\subsection{Preliminaries on bounded PRC fields}

We fix a bounded PRC field $K$  which is not algebraically closed
and a countable elementary substructure $K_0$ of $K$. So there is
$n \in \mathbb{N}$ such that $K$ has exactly $n$ distinct orders which are moreover definable (see Remark 3.2 of \cite{Mon}). Let
$\{<_1. \ldots, <_n\}$ be the orders on $K$.
If $n=0$, then $K$ is a PAC field, so we suppose from now on that $n\geq1$.

We will work over $K_0$, thus we denote by $\LCR$ the language of
rings with constant symbols for the elements of $K_0$, $\Li:= \LCR
\cup \{<_i\}$ and $\LC:= \LCR \cup \{<_1, \ldots, <_n\}$. We let
$T_{prc}:=Th_{\LCR}(K)=Th_{\mathcal L}(K)$. By Corollary 3.6 of \cite{Mon}, $T_{prc}$ is model
complete. If $M$ is a model of $T_{prc}$, we denote by
$\clos{M}{i}$ the real closure of $M$ with respect to $<_i$.

\bigskip

The following is a direct consequence  of the  ``Approximation
Theorem for $V$-topologies'' (\cite[Theorem 4.1]{PreZie}), and of
Fact $\ref{PRCcaracte}$.

\begin{fact}\label{SpTh}
 Let $(M, <_1, \ldots, <_n)$ be a model of  $T_{prc}$. Let $A$ be
 a subset of $M$ and for every order $<_i$ let $p^{(i)}$ be a quantifier-free
 $\Li$-type in $\clos{M}{i}$ (so a consistent set of polynomial $<_i$-inequalities).
Then $\bigcup_{i=1}^n p^{(i)}$ is a consistent type in $\LC$.
\end{fact}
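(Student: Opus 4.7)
The plan is to reduce by compactness to showing that every finite subset of $\bigcup_{i=1}^n p^{(i)}$ is realized in $M$ itself, and then to obtain such a common realization via the Approximation Theorem for $V$-topologies.

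First, I would fix a finite subset of $\bigcup_{i=1}^n p^{(i)}$ and, for each $i$, extract the finitely many formulas in it that come from $p^{(i)}$. These are polynomial inequalities of the form $f_{i,j}(x)>_i 0$ with $f_{i,j}\in M[x]$, and their common solution set
\[
U_i \ :=\ \{y\in \clos{M}{i} : f_{i,j}(y)>_i 0 \text{ for all } j\}
\]
is a non-empty open subset of $\clos{M}{i}$ in the order topology, by consistency of $p^{(i)}$. Using Fact \ref{PRCcaracte}(1), $M$ is dense in $\clos{M}{i}$, so $V_i := U_i\cap M$ is non-empty; a short continuity-plus-density argument shows that $V_i$ is in fact open in the $<_i$-topology on $M$ (one can shrink the $\clos{M}{i}$-interval witnessing openness at each $a\in V_i$ to one with endpoints in $M$).

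The next step is to invoke Fact \ref{PRCcaracte}(2) to see that the $n$ topologies induced on $M$ by $<_1,\ldots,<_n$ are pairwise distinct. Since each of them comes from an order on a field, they are $V$-topologies, so the Approximation Theorem for $V$-topologies (\cite[Theorem 4.1]{PreZie}) applies: pairwise distinct $V$-topologies on a field are weakly independent, meaning that any finite family of non-empty open sets in the respective topologies has non-empty intersection. This yields $a \in V_1\cap \cdots \cap V_n \subseteq M$, which simultaneously realizes the chosen finite subset of $\bigcup p^{(i)}$. Compactness then delivers consistency of the whole type.

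The only real point requiring care is checking that each $V_i$ is $<_i$-open in $M$ (rather than just a trace of a $\clos{M}{i}$-open), so that the Approximation Theorem applies to open sets of $M$ itself; this is essentially immediate from the density in Fact \ref{PRCcaracte}(1). Everything else is formal: the distinctness of the $V$-topologies is Fact \ref{PRCcaracte}(2), and compactness then packages the finite approximations into a consistent $\LC$-type.
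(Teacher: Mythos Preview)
Your argument is correct and is exactly the approach the paper intends: the paper gives no proof beyond declaring the statement a ``direct consequence'' of the Approximation Theorem for $V$-topologies together with Fact~\ref{PRCcaracte}, and you have simply spelled this out. The only cosmetic remark is that the fact is used in the paper for types in several variables, whereas you wrote the argument for a single variable; the extension is immediate by reducing each $V_i$ to a box and applying the one-variable approximation theorem coordinatewise.
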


Notice that the quantifier free $\LC$-types all have the same form
as the conclusion of Fact \ref{SpTh}.
We have the following amalgamation theorems for types:

\begin{fact}[\cite{Mon}, Theorem 3.21]\label{thamalgamation}
 Let $(M, <_1, \ldots, <_n)$ be a model of  $T_{prc}$. Let $E = \acl(E) \subseteq M$. Let $a_1, a_2, c_1,c_2$ be tuples of $M$ such that $E(a_1)^{alg}\cap E(a_2)^{alg}=E^{alg}$ and $\tp_{\LC}(c_1/E)=\tp_{\LC}(c_2/E)$. Assume that there is $c$ $ACF$-independent of $\{a_1,a_2\}$ over $E$ realizing $\qftp_{\LC}(c_1/E(a_1)) \cup \qftp_{\LC}(c_2/E(a_2))$.
Then $\tp_{\LC}(c_1/Ea_1) \cup \tp_{\LC}(c_2/Ea_2) \cup
\qftp_{\LC}(c/E(a_1,a_2))$ is consistent.
\end{fact}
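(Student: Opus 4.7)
The plan is to lift the given quantifier-free amalgamation to a full $\LC$-type amalgamation by combining model completeness of $T_{prc}$ with order-by-order amalgamation inside the real closures, glued across orders via Fact~\ref{SpTh}.

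First I would reduce from $\tp_{\LC}$ to $\qftp_{\LC}$ using model completeness of $T_{prc}$ (Corollary 3.6 of \cite{Mon}): every $\LC$-formula is equivalent modulo $T_{prc}$ to an existential one, so each formula in $\tp_{\LC}(c_j/Ea_j)$ is witnessed by a quantifier-free formula together with an auxiliary tuple $b_j$ from $M$. The assumption $\tp_{\LC}(c_1/E) = \tp_{\LC}(c_2/E)$ lets us choose the $b_j$ so that $\tp_{\LC}(c_1 b_1/E) = \tp_{\LC}(c_2 b_2/E)$. Working inside a sufficiently saturated elementary extension of $M$, the given $c$ can then be extended to an ACF-independent tuple $\tilde c$ still realizing $\qftp_{\LC}(c_1 b_1 / Ea_1) \cup \qftp_{\LC}(c_2 b_2 / Ea_2)$. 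This reduces the problem to showing consistency of $\qftp_{\LC}(c_1 b_1/Ea_1) \cup \qftp_{\LC}(c_2 b_2/Ea_2) \cup \qftp_{\LC}(\tilde c/E(a_1,a_2))$.

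Second, I would amalgamate, for each order $<_i$ separately, the quantifier-free $\Li$-types in the real closure $\clos{M}{i}$. Since RCF eliminates quantifiers, a quantifier-free $\Li$-type over a field is determined by a sign pattern of polynomials. The hypothesis $E(a_1)^{alg} \cap E(a_2)^{alg} = E^{alg}$ together with $E = \acl(E)$ gives linear disjointness of $E(a_1)$ and $E(a_2)$ over $E$, while the equality $\tp_{\LC}(c_1/E) = \tp_{\LC}(c_2/E)$ guarantees that the order-types of $c_1$ and $c_2$ agree over $E$ in each $<_i$. The given $\tilde c$ provides a common realization that already amalgamates the two sign patterns; classical real-closure amalgamation then extends them to a consistent quantifier-free $\Li$-type over $E(a_1, a_2, \tilde c)$ inside a real closed field extending $\clos{M}{i}$. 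Applying Fact~\ref{SpTh}, the union over all $i$ of these quantifier-free $\Li$-types is a consistent quantifier-free $\LC$-type; because $M$ is bounded PRC and therefore existentially closed in every totally real regular extension, this quantifier-free type is realized in an elementary extension of $M$, and model completeness finally upgrades the realization to one of the full $\LC$-types.

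The step I expect to be the main obstacle is the real-closure amalgamation: one must certify that the sign patterns imposed over $E(a_1)$ and over $E(a_2)$ can be simultaneously realized, and this is exactly where the ACF-independence of $\tilde c$ and the algebraic freeness $E(a_1)^{alg} \cap E(a_2)^{alg} = E^{alg}$ are essential --- they rule out the hidden polynomial relations that would otherwise force incompatible signs. A secondary delicate point is the bookkeeping in the reduction step: adding Skolem witnesses $b_j$ while keeping the extension $\tilde c$ of $c$ ACF-independent of $\{a_1 b_1, a_2 b_2\}$ over $E$ requires a careful saturation-and-compactness argument.
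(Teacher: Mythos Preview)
The paper does not prove this statement at all: it is quoted as a Fact from \cite{Mon} (Theorem~3.21 there) and is used as a black box, so there is no ``paper's own proof'' to compare your proposal against.

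As for your proposed argument, the main gap is in the reduction step. You claim that after choosing Skolem witnesses $b_1,b_2$ for the existential formulas in $\tp_{\LC}(c_j/Ea_j)$, the given $c$ ``can then be extended to an ACF-independent tuple $\tilde c$ still realizing $\qftp_{\LC}(c_1b_1/Ea_1)\cup\qftp_{\LC}(c_2b_2/Ea_2)$''. But this extension is exactly another instance of the amalgamation problem you are trying to solve: you now need a single tuple $b$ whose quantifier-free $\LC$-type over $Ea_1c$ agrees with that of $b_1$ over $Ea_1c_1$, and simultaneously over $Ea_2c$ with that of $b_2$ over $Ea_2c_2$, all while keeping ACF-independence from $a_1,a_2$. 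Nothing in the hypotheses hands you such a $b$; the given $c$ only controls the qftype of the $c$-coordinates, not of the auxiliary witnesses. Your final paragraph flags this bookkeeping as ``delicate'', but it is not just bookkeeping---it is the heart of the matter, and a bare appeal to saturation and compactness does not produce the required simultaneous realization.

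Once you assume such a $\tilde c$ exists, your second step is essentially vacuous: $\tilde c$ itself already realizes $\qftp_{\LC}(\tilde c/E(a_1,a_2))$, so there is nothing left to amalgamate, and the invocation of ``classical real-closure amalgamation'' and Fact~\ref{SpTh} is doing no work. The actual proof in \cite{Mon} does not proceed by Skolemizing; it uses the description of complete types in bounded PRC fields (via the Galois action and the orders) together with a direct construction of a regular totally real extension, and then existential closedness. If you want a self-contained argument along your lines, you would need to replace the reduction step by an inductive or back-and-forth construction that handles the witnesses one formula at a time, invoking the qf-amalgamation hypothesis at each stage over an enlarged base.
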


\medskip

We now recall some other model theoretic properties of $T_{prc}$.

\begin{fact}[\cite{Mon}, Theorem 4.21]
The theory $T_{prc}$ is NTP$_2$.
\end{fact}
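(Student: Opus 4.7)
The plan is to assume toward a contradiction that some $\LC$-formula $\phi(x,y)$ witnesses TP$_2$ with an array $(a_{i,j})_{i,j<\omega}$ whose rows are $k$-inconsistent and whose every path is consistent. The goal is to produce a common realization of $\phi(x,a_{0,0})\wedge\dots\wedge\phi(x,a_{0,k-1})$, contradicting the $k$-inconsistency of row $0$. The main tools I would use are the amalgamation theorem (Fact~\ref{thamalgamation}) together with Fact~\ref{SpTh}, which together play the role that the independence theorem plays for simple theories.

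First, by standard Ramsey/extraction arguments (passing to a subarray), I would replace $(a_{i,j})$ by a mutually indiscernible array over a countable elementary submodel $M$, and then, by extracting further, arrange that the rows are pairwise ACF-independent over $E:=\acl(M)$ (using that the theory of algebraically closed fields is stable and that extracting an ACF-indiscernible subsequence from an $\LC$-indiscernible sequence preserves $\LC$-indiscernibility inside each row). In particular, for the $0$-th row the tuples $a_{0,0},\dots,a_{0,k-1}$ can be taken ACF-independent over $E$, so $E(a_{0,j_1})^{alg}\cap E(a_{0,j_2})^{alg}=E^{alg}$ for $j_1\ne j_2$.

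Next, pick $c$ realizing the path $\{\phi(x,a_{i,0}) : i<\omega\}$. By the full indiscernibility of the array, for every $j<k$ there is some $c_j$ with $\tp_{\LC}(c_j,a_{0,j}/E)=\tp_{\LC}(c,a_{0,0}/E)$, in particular $\models\phi(c_j,a_{0,j})$. I would then iteratively amalgamate: take a realization $c^*$, ACF-generic over $E(a_{0,0},\dots,a_{0,k-1})$, of the quantifier-free $\LC$-type obtained by assembling the $\qftp_{\LC}(c_j/E(a_{0,j}))$ using Fact~\ref{SpTh} (applied order by order), and then apply Fact~\ref{thamalgamation} successively to upgrade this to a consistent full $\LC$-type extending each $\tp_{\LC}(c_j/Ea_{0,j})$. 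A realization of the resulting type satisfies $\bigwedge_{j<k}\phi(x,a_{0,j})$, contradicting $k$-inconsistency.

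The main obstacle is the bookkeeping of the two separate genericity requirements needed by Fact~\ref{thamalgamation}: the ACF-independence of the parameters (which must come from how the array is extracted, so that the hypothesis $E(a_1)^{alg}\cap E(a_2)^{alg}=E^{alg}$ is genuinely available) and the existence of a common quantifier-free realization $c$ satisfying the union of the quantifier-free types. The first is handled by working over $E=\acl(M)$ and extracting carefully; the second is exactly what Fact~\ref{SpTh} provides, because each orderwise piece of $\qftp_{\LC}(c_j/E(a_{0,j}))$ can be simultaneously realized in $\clos{M}{i}$ by density (Fact~\ref{PRCcaracte}(1)) combined with the quantifier-free compatibility coming from mutual indiscernibility. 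Once both ingredients are in place, the amalgamation goes through and the contradiction follows, establishing that $T_{prc}$ is NTP$_2$.
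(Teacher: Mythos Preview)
The paper gives no proof here; the fact is simply cited from \cite{Mon}. Your outline follows the strategy of that reference --- contradict the $k$-inconsistency of a row via iterated application of Fact~\ref{thamalgamation} --- but two of the preparatory steps do not work as you describe them.

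First, passing to an ``ACF-indiscernible subsequence'' does nothing: any subsequence of an indiscernible sequence carries exactly the same pairwise algebraic (in)dependence, so you cannot manufacture $E(a_{0,j_1})^{alg}\cap E(a_{0,j_2})^{alg}=E^{alg}$ that way. What is actually needed is to enlarge the base $E$ (using stability of ACF to absorb the canonical base of the limit type of the row) and then verify that indiscernibility and $k$-inconsistency survive over the larger base. Second, your appeal to Fact~\ref{SpTh} for the common quantifier-free realization $c^*$ hides a real obstruction: Fact~\ref{SpTh} lets you merge one $\Li$-type per order, but you need to merge, for a \emph{fixed} order $<_i$, the types $\qftp_{\Li}(c_j/E(a_{0,j}))$ for all $j<k$. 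These are cuts in $\clos{M}{i}$ over different parameter sets and can perfectly well be incompatible --- for instance if each places $x$ in the $<_i$-interval $(a_{0,j},a_{0,j}+1)$ and those intervals are pairwise disjoint. Mutual indiscernibility of the array alone does not rule this out; one needs a further argument (in particular arranging $c$ to be ACF-generic over the relevant parameters and analyzing the order types), and this is exactly where the work in \cite{Mon} lies.
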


\begin{fact}[\cite{Mon}, Theorem 4.35]
In $T_{prc}$, all sets are extensions bases and forking equals dividing.
\end{fact}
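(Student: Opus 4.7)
Since $T_{prc}$ is NTP$_2$ (just recalled), Fact~\ref{fact_ntp}(2) reduces the statement to showing that every set is an extension base. Fix $A$ and $p\in S(A)$; I will exhibit a global $A$-invariant extension $q$ of $p$, which is automatically non-forking over $A$: if $\phi(x,b)\in q$ and $(b_i)_{i<\omega}$ is an $A$-indiscernible sequence with $b_0=b$, invariance gives $\phi(x,b_i)\in q$ for all $i$, so $\{\phi(x,b_i):i<\omega\}\subseteq q$ is consistent. Using that dividing over $A$ coincides with dividing over $\acl(A)$, we may assume $A=\acl(A)$.

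The construction exhibits a ``generic ACF-independent'' extension of $p$. Pick $a\models p$ and, in the ACF-monster over $\monster$, pick a tuple $c$ algebraically independent from $\monster$ over $A$ with $\qftp_{\LCR}(c/A)=\qftp_{\LCR}(a/A)$. By Fact~\ref{SpTh}, for each ordering $<_i$ one may choose a compatible ordering on $A(c)^{alg}$ extending the given ordering on $A$ and matching the ordering data inherent in $p$; since $A(c)^{alg}\cap\monster=A$ (by ACF-independence of $c$ together with $A=\acl(A)$), these choices depend only on $p$ and involve no element of $\monster\setminus A$. Combined with the orderings already present on $\monster^{alg}$ this yields a consistent quantifier-free $L$-type of $c$ over $\monster$, which is promoted to a complete $L$-type $q:=\tp_L(c/\monster)\in S(\monster)$ extending $p$ via iterated applications of Fact~\ref{thamalgamation} to finite parameter sets of $\monster$, assembled by compactness.

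To verify $A$-invariance: any $\sigma\in\mathrm{Aut}(\monster/A)$ extends to an automorphism $\sigma^*$ of the ACF-monster fixing $A$, and $\sigma^*(c)$ is another tuple algebraically independent from $\monster$ over $A$ with the same $\qftp_{\LCR}$ over $A$ as $c$. The ordering data placed on $A(\sigma^*(c))^{alg}$ by the same procedure is the $\sigma^*$-translate of that on $A(c)^{alg}$, and because that data depends only on $p$, the complete $L$-type of $\sigma^*(c)$ over $\monster$ equals that of $c$; hence $\sigma(q)=q$, so $q$ is $A$-invariant and therefore a non-forking global extension of $p$. This makes $A$ an extension base.

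The main obstacle is the canonicity/uniqueness claim underlying the construction of $q$: namely that two tuples realizing the same quantifier-free $L$-type over $\monster$, both ACF-independent from $\monster$ over $A$, have the same complete $L$-type over $\monster$. This uniqueness is a consequence of Fact~\ref{thamalgamation} combined with the model completeness of $T_{prc}$. Handling this canonicity carefully, and in particular threading the amalgamation through the iterative construction over increasing finite parameter sets while maintaining the intersection hypothesis $E(a_1)^{alg}\cap E(a_2)^{alg}=E^{alg}$ at each stage, constitutes the technical heart of the argument.
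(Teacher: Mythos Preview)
The paper does not supply its own proof of this fact; it is quoted as Theorem 4.35 of \cite{Mon} and used as a black box. So there is nothing in the paper to compare your argument against.

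As for the argument itself: the strategy---reduce to $A=\acl(A)$, take a realization $c$ of $p$ that is ACF-independent from $\monster$ over $A$, and show that the resulting global type is $\mathrm{Aut}(\monster/A)$-invariant via the amalgamation theorem---is indeed the line taken in \cite{Mon}. However, your execution has a real gap in the construction and invariance of $q$. You describe choosing orderings on $A(c)^{alg}$ ``depending only on $p$'' and then ``combining'' them with the orderings on $\monster^{alg}$ to get $\qftp_{\LC}(c/\monster)$; but the amalgamation of orderings over a common subfield is not unique, so this does not determine a single quantifier-free type over $\monster$. What one actually needs (and what \cite{Mon} proves) is that any two tuples realizing the same complete $\LC$-type over $A=\acl(A)$ and both ACF-independent from a given parameter set $b$ over $A$ have the same complete $\LC$-type over $Ab$; this is an application of Fact~\ref{thamalgamation} with $a_1=a_2=b$, and it is what makes $q$ well-defined as a \emph{complete} type (even though the quantifier-free part is not canonical) and simultaneously yields invariance. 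Your text gestures at this in the final paragraph but treats it as an afterthought rather than the core of the argument, and the earlier paragraphs give the misleading impression that $q$ is pinned down at the quantifier-free level.

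A second, smaller issue: your appeal to Fact~\ref{SpTh} to put orderings on $A(c)^{alg}$ is not quite apt---that fact is about consistency of systems of inequalities in a model of $T_{prc}$, not about extending orderings to transcendental extensions. The relevant input is rather that any ordering on $A$ extends to $A(c)$ (which is elementary for fields), together with the amalgamation of ordered fields over a common relatively algebraically closed subfield.
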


\subsection{The multi-topology}

\begin{defi}
Let $(M, <_1, \ldots, <_n)$ be a model of $T_{prc}$, $A \subseteq
M$ and let $X \subseteq M^m$ be $\LCR(A)$-definable. Then
$\dim(X)= \max\{trdeg(\bar{x}/A): \bar{x} \in X\}$. This is a good
notion of dimension, since $\acl(A)= \dcl(A)= A^{alg} \cap M$
(\cite[Lemma 2.6]{Mon}). We will say that $\bar{a} \in X$ is a
\emph{generic point of $X$} over $A$ if $\dim(X) = trdeg(\bar{a}/A)$.
\end{defi}

\begin{defi}\textbf{(Multi-topology)}
 Let $(M, <_1, \ldots, <_n)$ be a model of $T_{prc}$.
Denote by $\tau_i$ the topology induced in $M$ by the order $<_i$.
By Fact \ref{PRCcaracte} (2), if $i \not = j$, then $\tau_i \not = \tau_j$.

A definable subset of $M$ of the form $I=
\displaystyle{\bigcap_{i=1}^n (I^i\cap M)}$ with $I^i$ a non-empty
$<_i$-open interval in $\clos{M}{i}$ is called a
\emph{multi-interval}.

Notice that by Fact \ref{SpTh} every multi-interval is non-empty
and if $I$ is a multi-interval, then $I$ is $<_i$-dense in each
$I^i$.

We define the \emph{multi-topology} $\tau$ as the topology in $M$
generated by the multi-intervals and  $\tau^m$ its product
topology in $M^m$. Observe that if $V$ is $\tau_i$-open, then it
is $\tau$-open. We call a \emph{multi-box in $M^m$} a set of the
form  $C= \displaystyle{\bigcap_{i=1}^n (C^i\cap M^m)}$, with
$C^i$ an $<_i$-box in $(\clos{M}{i})^m$.
\end{defi}

\medskip

We extend the definition of $(j_1, \ldots, j_r)$-cells for real
closed fields (see Definition 2.3 of \cite{Van2}) to find a
definition of multi-cells in the bounded PRC-field context.

\begin{defi}\textbf{(Multi-cells)}\label{Multi-cells}
Let $r \in \mathbb{N}$ and let $(j_1, \ldots, j_r)$ be a sequence
of zeros and ones of length $r$.

A \emph{$(j_1, \ldots, j_r)$-multi-cell} is definable subset $C$
of $M^r$ such that for every $i$ there is a $(j_1, \ldots,
j_r)$-cell $C^i$ in $\clos{M}{i}$ and
\[C= \displaystyle{\bigcap_{i=1}^n (C^i\cap
M^r)}.\]

\medskip

A \emph{multi-cell in $M^r$} is a $(j_1, \ldots, j_r)$-multi-cell,
for some $(j_1, \ldots, j_r)$.
\end{defi}

Observe that the $(1)$-multi-cells are multi-intervals and any
multi-box is a $(1,\ldots, 1)$-multi-cell.

Notice also that the \emph{open multi-cells in $M^r$} (or cells
which are open subsets of $M^r$) are precisely the
$\left(\underbrace{1, \ldots, 1}_{r}\right)$-multi-cells.

\begin{lem}\label{DimCells}
Let $m\in \mathbb{N}$ and let $(i_1, \ldots, i_m)$ and $(j_1,
\ldots, j_m)$ be two different sequences of zeros and ones of
length $m$. Let $C^i \in \clos{M}{i}$ be a $(i_1, \ldots,
i_m)$-cell and let  $C^j \in \clos{M}{j}$ be a $(j_1, \ldots,
j_m)$-cell. Then $\dim(C^i \cap C^j \cap M^m) < \min\{\dim(C^i),
\dim(C^j)\}$.
\end{lem}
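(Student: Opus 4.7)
The plan is a coordinate-by-coordinate transcendence-degree count combined with a case split based on the combinatorics of the sets $S_i=\{\ell:i_\ell=1\}$ and $S_j=\{\ell:j_\ell=1\}$ (note $\dim C^i=|S_i|$ and $\dim C^j=|S_j|$). Fix parameters $A\subseteq M$ defining both cells and take $\bar x=(x_1,\ldots,x_m)\in C^i\cap C^j\cap M^m$. Decompose
\[
\mathrm{trdeg}(\bar x/A)=\sum_{\ell=1}^m\mathrm{trdeg}(x_\ell/Ax_{<\ell}),
\]
each summand lying in $\{0,1\}$.

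Being in the $(i_1,\ldots,i_m)$-cell $C^i$ forces, at every position $\ell$ with $i_\ell=0$, the coordinate $x_\ell$ to equal a fixed $\Li$-definable continuous function of $x_{<\ell}$ evaluated inside $\clos{M}{i}$. Since the $\Li$-definable closure of a set in a real closed field is contained in its field-theoretic algebraic closure, we get $x_\ell\in(Ax_{<\ell})^{\mathrm{alg}}$ and the $\ell$-th summand vanishes. The same reasoning applied to $C^j$ kills the summand whenever $j_\ell=0$. Thus only indices with $i_\ell=j_\ell=1$ can contribute, yielding
\[
\dim(C^i\cap C^j\cap M^m)\le|S_i\cap S_j|.
\]
Since the sequences differ, $S_i\ne S_j$. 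If $S_i$ and $S_j$ are incomparable, then $|S_i\cap S_j|<\min(|S_i|,|S_j|)=\min(\dim C^i,\dim C^j)$ and the bound above is already strictly less than the desired minimum; this handles the easy half.

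The remaining and genuinely harder case is the nested one, say $S_i\subsetneq S_j$, in which the counting gives only $|S_i\cap S_j|=|S_i|=\min(\dim C^i,\dim C^j)$ and one must extract the missing strictness. Here I would pick $\ell_0=\min(S_j\setminus S_i)$ --- so $i_{\ell_0}=0$ and $j_{\ell_0}=1$ --- and use that $C^i$ forces $x_{\ell_0}=f(x_{<\ell_0})$ for a specific $A$-definable algebraic function $f$ valued in $\clos{M}{i}$, while $C^j$ forces $x_{\ell_0}$ into a $<_j$-open interval depending on $x_{<\ell_0}$. The key extra ingredient --- and the main obstacle --- is to turn this ostensibly open constraint into an algebraic one. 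The right tool is the PRC-specific multi-topology input: by Fact~\ref{PRCcaracte}(2) the orders $<_i$ and $<_j$ define genuinely different topologies on $M$, and Fact~\ref{SpTh} lets one realize $\tau_i$- and $\tau_j$-prescriptions independently on $M$. Spelled out, the condition that $f(x_{<\ell_0})$ lands in the prescribed $<_j$-interval forces an extra algebraic relation among the $x_{<\ell_0}$ that a $\tau_i$-generic point of the prefix-cell of $C^i$ cannot satisfy without lowering its transcendence degree by one. This produces the missing strict inequality and completes the proof; the entire weight of the argument rests on this final step, while everything else is the counting bookkeeping above.
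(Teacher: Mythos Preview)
Your transcendence-degree count is correct and is exactly the paper's argument: with $X_i=\{k:i_k=0\}$ one gets $\dim(C^i\cap C^j\cap M^m)\le m-|X_i\cup X_j|=|S_i\cap S_j|$, and when $S_i,S_j$ are incomparable this is already strictly below $\min(|S_i|,|S_j|)$. The paper in fact stops there, asserting $|X_i\cup X_j|>\max\{|X_i|,|X_j|\}$ from the mere inequality of the two sequences --- which is wrong precisely in the nested case you single out.

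Your attempt to rescue the nested case cannot work, because the lemma as stated is \emph{false} there. Take $m=2$, $(i_1,i_2)=(0,1)$, $(j_1,j_2)=(1,1)$; fix $a_0\in A\subseteq M$, let $C^i=\{a_0\}\times I$ for some $<_i$-interval $I$ with endpoints in $A$, and let $C^j\subseteq(\clos{M}{j})^2$ be an open box containing a point $(a_0,y_0)$ with $y_0\in I\cap M$. Then $\dim C^i=1$, $\dim C^j=2$, but $C^i\cap C^j\cap M^2$ is $\{a_0\}$ times a nonempty multi-interval, hence of dimension $1=\min(\dim C^i,\dim C^j)$. Your proposed mechanism --- that the $<_j$-open constraint on $f(x_{<\ell_0})$ forces an extra algebraic relation on $x_{<\ell_0}$ --- is exactly backwards: Fact~\ref{SpTh} says open conditions for the various orders can be realized \emph{simultaneously} in $M$, so they never create algebraic dependencies; and in the example $\ell_0=1$, so there is no prefix to constrain at all. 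None of this harms the paper: the only application, in Theorem~\ref{Cell-decom}, is to cells of equal dimension $d$, where $|S_i|=|S_j|$ together with $S_i\ne S_j$ forces incomparability, and then the counting in your first paragraph already gives the strict inequality.
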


\begin{proof}
Let $r_i= \dim(C^i)$ and $r_j = \dim(C^j)$.
Suppose that there is $\bar{a} = (a_1, \ldots, a_m) \in C $ such that $\bar{a}$ is a generic point of $C^i$ and $C^j$.
Let $X_i= \{a_k: i_k=0\}$, $X_j=\{a_k: j_k=0\}$. Then $r_i = m - |X_i|$ and $r_j = m - |X_j|$.
Observe that if $a_k \in X_i \cup X_j$, then $a_k \in \acl(a_1, \ldots, a_{k-1})$.
It follows that $\dim(C) \leq m - |X_i \cup X_j|$.
Since $(i_1, \ldots, i_m) \not = (j_1, \ldots, j_m)$, $|X_i \cup X_j| > \max\{|X_i|, |X_j|\}$.
Thus $\dim(C) \leq m - |X_i \cup X_j| < \min\{m - |X_i|, m - |X_j|\}= \min\{r_i, r_j\}$.
\end{proof}

It follows that for an intersection of two $r$-dimensional cells
to have dimension $r$, one needs that both cells have the same
sequences of $0$'s and $1$'s.

\begin{thm}\label{Cell-decom}
Let $(M, <_1, \ldots, <_n)$ be a  model of $T_{prc}$; let $A \subseteq M$, and $r \in \mathbb{N}$. Let $D \subseteq M^r$ be an $\LC(A)$-definable set in $M$.
Then there are $m \in \mathbb{N}$, and $C_1, \ldots, C_m$ with $C_j= \displaystyle{\bigcap_{i=1}^n (C^i_j\cap M^r)}$ a multi-cell in $M^r$ such that:
\begin{enumerate}
\item $D \subseteq \displaystyle{\bigcup_{j=1}^m C_ j}$;
\item $D \cap C_j$ is $\tau^r$-dense in $C_j$, for all $1 \leq j \leq m$;
\item for all $1 \leq i\leq n$ and $1\leq j\leq m$, $C^i_j$ is quantifier-free $\Li(A)$-definable in $\clos{M}{i}$;
\item for all $1 \leq i \leq n$ and $1 \leq j \leq m$, the set $C^i_j\cap M^r$ is $\Li(A)$-definable in $M$.
 \end{enumerate}
\end{thm}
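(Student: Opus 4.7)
The plan is to combine model completeness of $T_{prc}$, cell decomposition in each real closure $\closp{M}{i}$, and the amalgamation of quantifier-free $\Li$-types provided by Fact \ref{SpTh}. By model completeness (Corollary 3.6 of \cite{Mon}) I may write $D = \{\bar x \in M^r : \exists \bar y \in M^s\, \psi(\bar x,\bar y)\}$ for some quantifier-free $\LC(A)$-formula $\psi$. Putting $\psi$ in disjunctive normal form gives $\psi = \bigvee_\alpha \psi_\alpha$, where each conjunct $\psi_\alpha$, being a conjunction of polynomial equalities and sign conditions $p\,<_k\, 0$, splits as $\bigwedge_{i=1}^n \psi_\alpha^{(i)}(\bar x, \bar y)$ with each $\psi_\alpha^{(i)}$ quantifier-free $\Li(A)$. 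In each $\closp{M}{i}$, I apply cell decomposition to the $\Li(A)$-set $E_\alpha^{(i)}$ defined by $\psi_\alpha^{(i)}$ and then again to its projection to the first $r$ coordinates (which is $\Li(A)$-definable by Tarski--Seidenberg), producing a finite family $\{C^i_{\alpha,k}\}_k$ of $\Li(A)$-cells in $(\closp{M}{i})^r$. I form the candidate multi-cells $C_{\alpha,\vec k} = \bigcap_{i=1}^n (C^i_{\alpha,k_i} \cap M^r)$. By Lemma \ref{DimCells}, only tuples $\vec k$ for which all $C^i_{\alpha,k_i}$ share a common cell type $(j_1, \ldots, j_r)$ produce multi-cells of maximal dimension, which is automatic since that type records algebraic independence relations of coordinates, a property of the underlying field independent of the choice of order. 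A residue of lower dimension is handled by induction on $\dim(D)$.

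The decisive step, and the main obstacle, is verifying condition (2): that $D \cap C_{\alpha,\vec k}$ is $\tau^r$-dense in $C_{\alpha,\vec k}$. Fix a multi-box $B$ meeting $C_{\alpha,\vec k}$ and pick $(\bar a, \bar y_a) \in M^{r+s}$ with $\psi_\alpha(\bar a, \bar y_a)$ and $\bar a$ generic in $C_{\alpha,\vec k}$ over $A$. For each $i$, density of $M$ in $\closp{M}{i}$ (Fact \ref{PRCcaracte}(1)), combined with the genericity of $\bar a$ inside the cell $C^i_{\alpha,k_i}$, supplies a point $(\bar b^i, \bar z^i) \in (\closp{M}{i})^{r+s}$ realizing the same quantifier-free $\Li$-type over $A$ as $(\bar a, \bar y_a)$ and with $\bar b^i \in B^i$. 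Fact \ref{SpTh} amalgamates these qf-$\Li$-types into a consistent set realized by some $(\bar c, \bar y_c) \in M^{r+s}$, which satisfies every $\psi_\alpha^{(i)}$ and hence $\psi_\alpha$; therefore $\bar c \in D \cap C_{\alpha,\vec k} \cap B$. Conditions (3) and (4) hold by construction, since each $C^i_{\alpha,k}$ is an $\Li(A)$-cell in $\closp{M}{i}$ and its trace $C^i_{\alpha,k} \cap M^r$ is then $\Li(A)$-definable in $M$.

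The main technical difficulty is coordinating the cell decompositions of $E_\alpha^{(i)}$ and of its projection so that the qf-$\Li$-type of a generic point $(\bar a, \bar y_a) \in E_\alpha^{(i)}$ simultaneously determines the cell of $E_\alpha^{(i)}$ containing it and the cell $C^i_{\alpha,k_i}$ of its projection $\bar a$; only then does the amalgamation step above yield an actual point of $D$ rather than only of its projection. The remaining steps --- the disjunctive splitting of $\psi$, the induction on $\dim(D)$, and the verification that the $C_{\alpha,\vec k}$ genuinely match Definition \ref{Multi-cells} --- are essentially bookkeeping.
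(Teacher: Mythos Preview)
Your overall architecture matches the paper's: induction on $\dim(D)$, model completeness to write $D$ as a projection, cell decomposition in each $\clos{M}{i}$, forming the intersections $C_{\vec k}$, invoking Lemma~\ref{DimCells} to see that only matching cell-types survive at top dimension, and relegating the rest to the inductive residue. Where you diverge is in the normal form you start from and, consequently, in the density argument.

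The paper does not merely put $\psi$ in DNF. Using Theorem~3.13 of \cite{Mon} it reduces to the case where $D$ is the projection of $W^{\mathrm{sim}}(M)$ for a \emph{single absolutely irreducible} variety $W$. This is what makes the density step clean: for \emph{any} point $z$ in a candidate multi-cell (not just a generic one), $z$ lies in the $\clos{M}{i}$-projection $A_i$ of $W^{\mathrm{sim}}$, so there is a simple point $(z,y^{(i)})\in W(\clos{M}{i})$; smoothness plus the PRC approximation property then produces an $M$-point of $W$ simultaneously $<_i$-close to each $(z,y^{(i)})$, landing in the prescribed multi-box. No genericity or type-preservation is needed.

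Your density argument instead fixes a generic $(\bar a,\bar y_a)$ over $A$ and tries, for each $i$, to realize its qf $\Li(A)$-type by some $(\bar b^i,\bar z^i)$ with $\bar b^i$ inside the given box $B^i$. This step is the genuine gap, not a bookkeeping issue. First, the cell of $(\clos{M}{i})^{r+s}$ containing $(\bar a,\bar y_a)$ may project onto a proper $A$-definable subcell of $C^i_{\alpha,k_i}$ that misses $B^i\cap C^i_{\alpha,k_i}$ entirely; genericity of $\bar a$ in $C^i_{\alpha,k_i}$ does not prevent this, because the fibre coordinate $\bar y_a$ may force $(\bar a,\bar y_a)$ into a cell whose projection is strictly smaller. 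Second, even if you could locate such $(\bar b^i,\bar z^i)$, Fact~\ref{SpTh} as stated is about polynomial \emph{inequalities}; it does not by itself let you amalgamate types containing polynomial equalities coming from $\psi_\alpha$. The paper's route through simple points of an absolutely irreducible $W$ is exactly what supplies the extra input (implicit function theorem plus the PRC axiom) needed to handle the algebraic constraints. The ``technical difficulty'' you flag in your last paragraph is therefore not peripheral: it is the heart of the proof, and the reduction to $W^{\mathrm{sim}}$ is how it is resolved.
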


\begin{proof}
The proof is by induction on the dimension of $D$. The case
$\dim(D)= 1$ follows from \cite[Theorem 3.13]{Mon}. Suppose that
$\dim(D) = d$. As in Theorem 3.13 \cite{Mon} using model
completeness of $T_{prc}$ we can suppose that there is an
absolutely irreducible variety $W$ defined over $\acl(A)$ such
that:
\[M \models \forall x_1, \ldots,x_r \left(\left(x_1, \ldots,x_r\right) \in D\right) \longleftrightarrow \left( \exists \bar{y}\
\left(x_1, \ldots,x_r, \bar{y}\right) \in
W^{sim}\left(M\right)\right),\] where $W^{sim}(M) = \{\bar{x} \in
W(M): \bar{x} \; \mbox{is a simple point of}\; W\}.$

Let $d = |\bar{y}|$, for each $i \in \{1, \ldots,n\}$ we define:
\[A_i := \{(x_1, \ldots, x_r) \in (\clos{M}{i})^r: \exists \bar{y} \in {(\clos{M}{i})}^d \text{ s.t. } (x_1, \ldots, x_{r}, \bar{y}) \in W^{sim}(\clos{M}{i})\}.\]

So $A_i$ is $\Li(A)$-definable and $D\subseteq A_i$.  By cell
decomposition in $\clos{M}{i}$, there are $k_i\in \mathbb{N}$,
$<_i$-cells $C^i_1, \ldots, C^i_{k_i}$ and $X^i$ such that:
\begin{enumerate}
 \item the sets $C^i_1, \ldots, C^i_{k_i}, X^i$ are quantifier free $\Li(A)$-definable in $\clos{M}{i}$;
\item $\dim(C_j^i)= d$, for all $j \in \{i_1, \ldots, i_{k_i}\}$;
\item $\dim(X^i)< d$, \item $A_i =
\displaystyle{\bigcup_{j=1}^{k_i}C_j^i} \cup X^i$.

 \end{enumerate}

Let $X= \displaystyle{\bigcup_{i=1}^n}(X^i \cap M^r)$ and let

\[J :=\{\sigma:\{1, \ldots, n\} \rightarrow \mathbb N \ |\
1\leq \sigma(i)\leq k_i\}.\]

\medskip

For all $\sigma \in J$, let $C_{\sigma}:=
\displaystyle{\bigcap_{i=1}^n{(C^i_{\sigma(i)}}}\cap M^r))$, so $D
\subseteq \displaystyle{\bigcup_{\sigma \in J}C_{\sigma} \cup X}$.
We are interested in $C_\sigma$ of maximal dimension $d$, so let
\[J' := \left\{\sigma \in J : \dim\left(C_{\sigma}\right)=d\right\}.\]

Let $\sigma\in J'$. By Lemma \ref{DimCells} all the cells $C^i_{\sigma(i)}$
must have the same sequences of $0$'s and $1$'s and therefore
$C_{\sigma}$ is a multi-cell in $M^r$.

\begin{claim}
 For all $\sigma \in J'$, $D\cap C_{\sigma}$ is $\tau^r$-dense in $C_\sigma$.
\end{claim}
\begin{claimproof}
Fix $\sigma \in J'$. Let $U_{\sigma}$ be a multi-box in $M^r$ such
that $V:= U_{\sigma} \cap C_{\sigma} \not = \emptyset$, we need to
show that $V \cap D  \not = \emptyset.$ Let $z \in V$. Then $z \in
A_i$ for all $i \in \{1, \ldots, n\}$. So there is $y^{(i)} \in
(\clos{M}{i})^{d}$, such that $(z, y^{(i)})$ is a simple point of
$W$. By Fact \ref{SpTh} we can find $(z_0, \bar{y_0}) \in W(M)$
such that $(z_0, \bar{y_0})$ is arbitrary $<_i$-close to $(z,
y^{(i)})$ for all $i\in \{1, \ldots, n\}$, in particular we can
find $z_0 \in V\cap D$.
\end{claimproof}

Let $Y= X \cup \displaystyle{\bigcup_{\sigma \in J \setminus
J'}C_{\sigma}}$, so $Y$ is an $\LC(A)$-definable set and
$\dim(Y)<d$.

Then $D \subseteq \displaystyle{\bigcup_{\sigma \in J'}C_{\sigma}
\cup Y}$ and each $C_{\sigma}$ satisfy $(2), (3)$ and $(4)$ of the
theorem. Since $\dim(Y)< d$, by induction hypothesis we can apply
the statement of the theorem to $Y$ instead of $D$, which
completes the proof.
\end{proof}

\begin{defi}
Let $(M, <_1, \ldots, <_n)$ be a model of $T_{prc}$ and $D\subseteq M^r$ a definable set.
Denote by $\overline{D}$ the closure of $D$ for the $\tau^r$-topology.
Observe that $\overline{D} = \displaystyle{\bigcap_{i=1}^n\overline{D}^{\tau_i}}$, where $\overline{D}^{\tau_i}$ is the closure of $D$ for the $\tau_i$-topology.
\end{defi}

If $X \subseteq M^r$ is a definable set and $C_1, \ldots, C_m$ are
the multi-cells obtained by Theorem \ref{Cell-decom}, then
$\displaystyle{\bigcup_{j=1}^m C_j} \subseteq \overline{D}$. This
implies the following corollary.

\begin{cor}\label{multiclosure}
Let $(M, <_1, \ldots, <_n)$ be a  model of $T_{prc}$, let $A
\subseteq M$, and $r \in \mathbb{N}$. Let $D \subseteq M^r$ be an
$\LC(A)$-definable set in $M$. Then there are $m \in \mathbb{N}$,
and $C_1, \ldots, C_m$ with $C_j= \displaystyle{\bigcap_{i=1}^n
(C^i_j\cap M^r)}$ a multi-cell in $M^r$ such that: $\overline{D} =
\displaystyle{\bigcup_{j=1}^m C_ j}$ and  such that for all $1
\leq i\leq n$, $C^i_j$ is quantifier-free $\Li(A)$-definable in
$\clos{M}{i}$, for all $1 \leq j \leq m$.
\end{cor}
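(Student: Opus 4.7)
The plan is to apply Theorem \ref{Cell-decom} not to $D$ itself but to its $\tau^r$-closure $\overline{D}$. Granting that $\overline{D}$ is $\LC(A)$-definable, the corollary will follow from the theorem together with the observation, stated by the authors immediately before the corollary, that the multi-cells produced by Theorem \ref{Cell-decom} applied to any definable set $X$ always lie inside $\overline{X}$.

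To verify that $\overline{D}$ is $\LC(A)$-definable I would use the density Fact \ref{PRCcaracte}(1): since $M$ is $<_i$-dense in $\clos{M}{i}$ for each $i$, the multi-boxes with corners in $M$ form a neighbourhood base of the $\tau^r$-topology at every point of $M^r$. Consequently, a point $x \in M^r$ lies in $\overline{D}$ if and only if for every tuple $(a^i_k, b^i_k)_{i,k}$ of elements of $M$ satisfying $a^i_k <_i x_k <_i b^i_k$ for all $i,k$, there exists $y \in D$ with $a^i_k <_i y_k <_i b^i_k$ for all $i,k$. This is a first-order condition on $x$ over $A$ in $\LC$.

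Having this, Theorem \ref{Cell-decom} applied to $\overline{D}$ yields multi-cells $C_1, \ldots, C_m$ covering $\overline{D}$ and satisfying the required quantifier-free definability conditions $(3)$ and $(4)$. The reverse inclusion $\bigcup_j C_j \subseteq \overline{D}$ is forced by condition $(2)$: the set $\overline{D} \cap C_j$ is $\tau^r$-dense in $C_j$, so $C_j \subseteq \overline{\overline{D} \cap C_j} \subseteq \overline{\overline{D}} = \overline{D}$, using that $\overline{D}$ is $\tau^r$-closed. Hence $\overline{D} = \bigcup_j C_j$, which is the desired conclusion.

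I do not foresee a substantive obstacle in this argument: the entire content of the proof is the first-order definability of $\overline{D}$ via PRC density, after which the corollary reduces to a direct application of Theorem \ref{Cell-decom} to $\overline{D}$ combined with the closed-set version of the density observation already noted by the authors.
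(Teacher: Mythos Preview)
Your proposal is correct and is essentially the paper's own proof: apply Theorem \ref{Cell-decom} to $\overline{D}$ (which is $\LC(A)$-definable) and use the observation preceding the corollary that the resulting multi-cells are contained in $\overline{\overline{D}}=\overline{D}$. The only difference is that you spell out why $\overline{D}$ is $\LC(A)$-definable, whereas the paper simply asserts it (it also follows directly from the displayed identity $\overline{D}=\bigcap_i \overline{D}^{\tau_i}$ since each $<_i$ is definable).
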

\begin{proof}
The set $\overline{D}$ is $\LC(A)$-definable (so was $D$) and by Theorem
\ref{Cell-decom} there are $C_1, \ldots, C_m$ multi-cells in $M^r$
such that $\overline{D} \subseteq
\displaystyle{\bigcup_{j=1}^m}C_j \subseteq
\overline{(\overline{D})} = \overline{D}$. So $\overline{D}=
\displaystyle{\bigcup_{j=1}^m}C_j$.
\end{proof}

\begin{cor}\label{PRCAlgBound}
The theory $T_{prc}$ is algebraically bounded.
\end{cor}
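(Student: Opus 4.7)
The plan is to reduce to the standard characterization (essentially due to van den Dries) that a theory $T$ extending the theory of fields is algebraically bounded if and only if, in every model $K \models T$ and every $A \subseteq K$, the model-theoretic algebraic closure $\acl_K(A)$ coincides with the field-theoretic algebraic closure $A^{alg} \cap K$. The pointwise half of this characterization is already established for $T_{prc}$ in the excerpt: by \cite[Lemma 2.6]{Mon}, one has $\acl(A) = \dcl(A) = A^{alg} \cap M$ in every model $M$ of $T_{prc}$.

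To implement the reduction, fix an $\LC$-formula $\phi(\bar x, y)$ with $y$ a single variable, and assume that an integer $N = N(\phi)$ has been found such that $|\phi(\bar a, K)| \leq N$ whenever this set is finite. Consider the partial type
\[
\Sigma(\bar x) = \{\exists^{\leq N} y\, \phi(\bar x, y)\} \cup \bigl\{f(\bar x, Y) \equiv 0 \,\vee\, \exists y\,(\phi(\bar x, y) \wedge f(\bar x, y) \neq 0) : f \in \mathbb Z[\bar X, Y]\bigr\}
\]
over $\emptyset$, where $f(\bar x, Y) \equiv 0$ abbreviates the finite conjunction stating that every $Y$-coefficient of $f(\bar x, Y)$ vanishes. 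A realization $\bar a$ would yield a finite $\phi(\bar a, K)$; by \cite[Lemma 2.6]{Mon}, each of its elements is algebraic over $K_0(\bar a)$, so the product of the corresponding minimal polynomials (cleared of denominators) gives an $f \in \mathbb Z[\bar X, Y]$ with $f(\bar a, Y) \not\equiv 0$ and $\phi(\bar a, K)$ inside its zero set; for this $f$, both disjuncts of the second family in $\Sigma$ fail, a contradiction. Hence $\Sigma$ is inconsistent, and compactness extracts a finite subset $\{f_1, \ldots, f_n\} \subseteq \mathbb Z[\bar X, Y]$ witnessing the inconsistency --- exactly the conclusion of algebraic boundedness for $\phi$.

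The main obstacle is producing the uniform integer $N$. I would derive it from Theorem \ref{Cell-decom}: if $\phi(\bar a, K)$ is finite, then every multi-cell $C_j$ in the covering must be zero-dimensional, because $\phi(\bar a, K) \cap C_j$ is dense in $C_j$ and any positive-dimensional multi-cell contains a multi-interval and so is infinite. The standard uniform cell decomposition in real closed fields (a consequence of o-minimality) bounds, independently of $\bar a$, the number of cells appearing in the decompositions of the sets $A_i \subseteq \clos{K}{i}$ used in the proof of Theorem \ref{Cell-decom}; an induction on dimension mirroring that proof then yields a uniform bound on the total number of multi-cells, and hence on $|\phi(\bar a, K)|$ when finite.
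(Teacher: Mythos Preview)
Your argument is correct but takes a substantially longer route than the paper's one-line ``directly from Theorem~\ref{Cell-decom}.'' The paper presumably applies the theorem once to the graph $\{(\bar x,y):\phi(\bar x,y)\}\subseteq M^{|\bar x|+1}$ and reads the witnessing polynomials off the quantifier-free $\Li(A)$-descriptions of the component cells $C_j^i$ (which, being semialgebraic over $A$, are Boolean combinations of polynomial sign conditions). You instead pass through the pointwise identity $\acl(A)=A^{alg}\cap M$ and a compactness argument, which forces you to manufacture a uniform bound $N$ on finite fibers; you then sketch a derivation of $N$ from a parametric form of Theorem~\ref{Cell-decom}. That step is plausible---uniform cell decomposition over RCF is standard---but carrying the uniformity through the dimension induction and through the base case \cite[Theorem~3.13]{Mon} is genuine additional work that the paper does not ask the reader to do. One small technicality: your product of minimal polynomials has coefficients in $K_0$ rather than $\mathbb Z$, since the language carries constants for $K_0$ and $\acl(\bar a)=K_0(\bar a)^{alg}\cap M$; this is harmless but should be said, or the finitely many constants of $K_0$ appearing in $\phi$ should be absorbed into the parameter tuple $\bar x$ at the outset.
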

\begin{proof}
Directly from Theorem \ref{Cell-decom}.
\end{proof}

\noindent \textbf{Notation.} Let $M$ be a structure and let $D
\subseteq M^r$ be a definable set. Let $k < r$. We define
\[\pi_k^M(D):= \{(x_1, \ldots, x_k) \in M^k: M \models \exists
x_{k+1}, \ldots, x_{r} \;(x_{1}, \ldots, x_{r}) \in D \}.\]

For $\bar{\alpha} \in \pi_k^M(D)$, define
$D^M_{\bar{\alpha}}:=\{\bar{y} \in M^{r-k}: (\bar{\alpha}, \bar{y}) \in D\}.$
Define $D^M(a_1, \ldots, a_k):= \{(a_1, \ldots, a_k, x_{k+1},
\ldots, x_r): M \models (a_1, \ldots, a_k, x_{k+1}, \ldots, x_r)
\in D\}$.
We omit $M$ when the structure is clear.

\section{Type definable subgroups of algebraic groups}\label{SAlgebraic}

We wish to apply Proposition \ref{TdefGps} to analyze type
definable groups of algebraic groups in bounded PRC fields. For
this, we need to develop the theory of externally definable sets
in bounded PRC fields.

We will show that expanding a bounded PRC field with certain
\emph{externally definable} sets has elimination of quantifiers,
analogous to results in \cite{BaPo} and \cite{Sh783}.

\begin{defi}
Let $T$ be a theory and let $M$ be a model of a theory $T$. An \emph{externally definable subset of $M^k$}
is an $X \subseteq M^k$ that is equal to $\varphi(N^k, d)\cap M^k$ for some formula $\varphi$ and $d$ in some $N \succeq M$.

We denote by $M^{Sh}$ (\emph{the Shelah expansion of $M$}) the structure obtained from $M$ by naming all the externally definable sets.
\end{defi}

\begin{defi}

Let $(M, <_1, \ldots, <_n)$ be a model of $T_{prc}$.
We say that $C = \bigcap_{i=1}^n C^i \cap M^r$ is an \emph{externally definable multi-cell in $M^r$} if for $i\in \{1,\ldots, n\}$, $C^i$ is the trace on $(\clos{M}{i})^r$ of a cell defined with exterior parameters.
We say that $C$ is a \emph{multi-cell externally $\mathcal{L}(N)$-definable} if $N\succeq M$ and for each $i \in \{1, \ldots, n\}$, there is $\phi_i(\bar{x}) \in \mathcal{L}(\clos{N}{i})$ such that $C^i = \phi_i(M)$.

\end{defi}

Baisalov and Poizat prove in \cite{BaPo} that the theory
resulting in expanding the language of any o-minimal structure
with externally definable sets has elimination of
quantifiers. This was generalized by Shelah to all NIP theories in
\cite{Sh783}.

%

\begin{prop}\label{prop_extrcf}
Let $R$ be a model of RCF in the ring language $\mathcal L_{ring}$.
Then any definable subset of $R^{Sh}$ can be written as a finite union of sets of the form $U\cap D$, where $U$ is an open externally definable subset and $D$ is $\mathcal L_{ring}$-definable.
\end{prop}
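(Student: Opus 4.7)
The plan is to combine quantifier elimination in $\mathrm{RCF}$ with a simple linear-algebra observation that trivialises the equality atoms. By construction of $R^{Sh}$, every definable subset of it is externally definable, that is, of the form $X = \{x \in R^k : N \models \varphi(x;d)\}$ for some $\mathcal{L}_{\text{ring}}$-formula $\varphi$ and some parameter tuple $d$ in an elementary extension $N \succeq R$. Since $\mathrm{RCF}$ eliminates quantifiers, I would begin by rewriting $\varphi(x;d)$ as a finite disjunction of conjunctions of atoms of the form $P(x;d) = 0$ or $Q(x;d) > 0$ with $P,Q$ integer polynomials, thereby reducing to handling a single such conjunction.

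The strict-inequality atoms are straightforward: for any polynomial $Q$, the map $x \mapsto Q(x;d)$ is continuous from $R^k$ to $N$, so $\{x \in R^k : Q(x;d) > 0\}$ is open in $R^k$ and externally definable, and a finite intersection of such sets yields an open externally definable set $U$.

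The main obstacle will be to show that the trace on $R^k$ of a polynomial zero set $\{P(x;d) = 0\}$, with $P(x;d) = \sum_I c_I x^I \in N[x]$, is already $\mathcal{L}_{\text{ring}}$-definable. The plan for this step is to pick an $R$-basis $e_1,\ldots,e_s$ of the finite-dimensional $R$-vector subspace of $N$ spanned by the finitely many coefficients $c_I$, write each $c_I = \sum_{r} \lambda_{I,r}\, e_r$ with $\lambda_{I,r}\in R$, and regroup as
\[
P(x;d) \;=\; \sum_{r=1}^s e_r\, P_r(x), \qquad P_r(x) := \sum_I \lambda_{I,r}\, x^I \in R[x].
\]
For $x \in R^k$ each $P_r(x)$ lies in $R$, so the $R$-linear independence of the $e_r$ in $N$ forces $P(x;d)=0$ to be equivalent to the simultaneous vanishing of $P_1,\ldots,P_s$. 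This cuts out an $R$-algebraic, hence $\mathcal{L}_{\text{ring}}$-definable, subset of $R^k$; a finite intersection of such traces is still $\mathcal{L}_{\text{ring}}$-definable, furnishing the required set $D$. Combining everything, the disjunctive decomposition of $\varphi$ expresses $X$ as a finite union $\bigcup_i (U_i \cap D_i)$ of the requested form. The only genuinely non-routine ingredient is the linear-algebra reduction for the equality atoms; everything else is direct unwinding of $\mathrm{RCF}$ quantifier elimination.
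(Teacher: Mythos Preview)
Your argument is correct and is in fact more elementary than the paper's proof. One small inaccuracy: the claim that every set definable in $R^{Sh}$ is already externally definable is not ``by construction'' but is the content of the Baisalov--Poizat (or Shelah) quantifier-elimination theorem for $R^{Sh}$, which the paper cites just before this proposition; you should invoke it explicitly rather than assert it as obvious.

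Beyond that, the two proofs follow genuinely different routes. The paper argues by induction on the weakly o-minimal dimension of $X$: it reduces to the case where $X$ is the graph of a definable function over an open base, then invokes the honest-definitions result of Chernikov--Simon to replace $X$ by an internally definable set with the same trace, and finally uses o-minimality of $\mathrm{RCF}$ to recognise the graph as a root function of an $R$-definable polynomial. Your proof bypasses all of this: after Baisalov--Poizat and $\mathrm{RCF}$ quantifier elimination, the only point requiring thought is that a polynomial equality $P(x;d)=0$ with $d\in N$ traces on $R^k$ to an $R$-definable set, and your linear-algebra decomposition $P(x;d)=\sum_r e_r P_r(x)$ over an $R$-basis of the span of the coefficients settles this cleanly. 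The paper's route uses heavier tools (honest definitions, weak o-minimality of the expansion) but would adapt more readily to settings without such explicit quantifier elimination; your route is shorter and self-contained, exploiting the concrete syntactic shape of $\mathrm{RCF}$ formulas.
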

\begin{proof}
By \cite{BaPo} the structure $R^{Sh}$ is weakly o-minimal, so it makes sense to consider dimensions of definable sets. Let $X\subseteq R^n$ be definable in $R^{Sh}$. We prove the result by induction on the dimension of $X$. If $X$ has dimension 0, then it is finite, and the result follows.

For the inductive case, we can write $X$ as the union of an open set and a set of lower dimension, so we can assume that $X$ has dimension $d<n$. Let $\pi:R^n \to R^d$ be a coordinate projection such that $\pi(X)$ has non-empty interior (see Theorem 4.11 of \cite{MMS}). Then again writing $\pi(X)$ as the union of an open set and a set of smaller dimension, we may assume that $\pi(X)$ is open. For each $\bar a \in \pi(X)$, the fiber $X_{\bar a}$ is finite. By decomposing $X$ further, we may assume that it has always exactly one element. So $X$ is the graph of a function from $U:=\pi(X)$ to $R^{n-d}$.

Let $R^{Sh} \prec R'$ be a sufficiently saturated elementary
extension. Then by Proposition 1.7 in \cite{ChSi2}, there is an
$\mathcal L_{ring}(R')$-definable set $X' \subseteq R'$ such that
$X'(R') \subseteq X(R')$ and $X'(R) = X(R)$. Hence $X'$ is also
the graph of a function from some $R'$-definable set $V$ to
$R^{n-d}$, with $V(R)=U(R)$. As we are working in RCF, up to
decomposing $V$ in finitely many $R'$-definable sets, we may
assume that $f'$ is the function sending a point $\bar a\in V$ to
the $k$-th solution of $P(\bar b,\bar a,\bar Y)$, where $P(\bar
b,\bar T,\bar Y)$ is a polynomial with coordinates $\bar b\in R'$.
Since by hypothesis, $P(\bar b,\bar T,\bar Y)$ has a solution in
$R$ for each $\bar a$ in the open set $U$, $P$ is definable over
$R$. This implies that $X$ coincides on $U$ with the graph
$\Gamma$ of an $R$-definable function. Then $X=U\times R^{n-d}
\cap \Gamma$ has the required form.
\end{proof}

We now aim to show that the expansion of a bounded PRC field
in $\LC$ by externally definable multi-cells has elimination of
quantifiers and is NTP$_2$.

\begin{prop}\label{UniformDecom}
Let $(M, <_1, \ldots, <_n)$ be a model of $T_{prc}$. Let $A
\subseteq M$ and let $D\subseteq M^r$ be
$\mathcal{L}(A)$-definable. Then there are $m \in \mathbb{N}$ and
$C_1, \ldots, C_m$ multi-cells in $M^r$, $\mathcal{L}(A)$-definable such
that $D \subseteq \bigcup_{j=1}^m {C_j}$ and such that for every
$\bar{x} \in \pi_{r-1}(D\cap C_j)$ the fiber $D_{\bar x}$ is $\tau$-dense
in $(C_j)_{\bar x}$.
\end{prop}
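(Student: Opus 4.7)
The plan is to proceed by induction on $d := \dim D$. The base case $d = 0$ is trivial since $D$ is finite and each point constitutes a $0$-dimensional multi-cell. For the inductive step $d \geq 1$, first apply Theorem \ref{Cell-decom} to obtain multi-cells $C_1, \ldots, C_m$ covering $D$ with $D \cap C_j$ being $\tau^r$-dense in each $C_j$. Multi-cells $C_j$ whose defining sequence ends in $0$ have singleton fibers, so the fiber density condition is vacuous; multi-cells with $\dim C_j < d$ can be handled by applying the inductive hypothesis to $D \cap C_j$. The essential case is $C_j$ of dimension $d$ whose sequence ends in $1$, so that $\dim \pi_{r-1}(C_j) = d - 1$ and fibers are multi-intervals.

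For such $C_j$, define the \emph{bad set}
\[
B_j = \{\bar x \in \pi_{r-1}(D\cap C_j) : D_{\bar x} \text{ is not } \tau\text{-dense in } (C_j)_{\bar x}\},
\]
which is $\LC(A)$-definable, and let $E_j := D \cap \pi_{r-1}^{-1}(B_j)$. The \emph{key lemma} is that $\dim E_j < d$. Assuming it, the inductive hypothesis applied to $E := \bigcup_j E_j$ yields multi-cells covering $E$ with the fiber density condition. For the complementary part of $D$, one refines each relevant $C_j$ by applying Theorem \ref{Cell-decom} to $\pi_{r-1}(C_j) \setminus B_j$ and lifting the resulting multi-cells $U_l$ to sub-multi-cells $C'_{j,l} := C_j \cap \pi_{r-1}^{-1}(U_l)$ of $C_j$, iterating on any residual boundary of strictly smaller dimension. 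The fiber density transfers from $C_j$ to these sub-multi-cells because a multi-interval sub-fiber is $\tau$-open in the parent multi-interval, so $\tau$-density of $D_{\bar x}$ in $(C_j)_{\bar x}$ for $\bar x \notin B_j$ descends to density in $(C'_{j,l})_{\bar x}$.

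The main obstacle is the key lemma, which I reduce to the dimension estimate $\dim (C_j \setminus D) < \dim C_j = d$. Indeed, for $\bar x \in B_j$ there is a multi-interval $I \subseteq (C_j \setminus D)_{\bar x}$, so $\dim (C_j \setminus D)_{\bar x} \geq 1$; the fiber-dimension formula then yields $\dim B_j \leq \dim (C_j \setminus D) - 1$ and hence $\dim E_j \leq \dim B_j + 1 \leq \dim (C_j \setminus D) < d$. To establish $\dim (C_j \setminus D) < \dim C_j$, note that $D \cap C_j$ being $\tau^r$-dense in $C_j$ means $C_j \setminus D$ has empty $\tau^r$-interior relative to $C_j$. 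I would then invoke the structural fact that in bounded PRC fields any $\LC$-definable subset of a multi-cell $C$ of dimension $\dim C$ must contain a non-empty $\tau^r$-relatively-open subset of $C$. This is proved by combining Theorem \ref{Cell-decom} with cell decomposition in each $\clos{M}{i}$ and Fact \ref{SpTh}: if $C_j \setminus D$ had dimension $d$, then a maximum-dimensional multi-cell $V \subseteq C_j$ would exist on which both $D$ and $C_j \setminus D$ are $\tau^r$-dense, and the cell structure in each $\clos{M}{i}$ together with the amalgamation property Fact \ref{SpTh} can be used to show this configuration is impossible in $T_{prc}$. This ``tameness'' property, forbidding two disjoint $\LC$-definable subsets from being simultaneously dense in a common multi-cell, is the structural heart of the argument and where the specific axioms of bounded PRC fields play a crucial role.
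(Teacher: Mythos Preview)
Your argument has a genuine gap at the ``structural fact'' you invoke at the end. The claim that a full-dimensional $\LC$-definable subset of a multi-cell must contain a relatively $\tau$-open subset---equivalently, that two disjoint definable sets cannot both be $\tau$-dense in the same multi-cell---fails in bounded PRC fields as soon as $T_{prc}$ does not eliminate quantifiers in $\LC$, which is the generic situation (and is precisely why Theorem~\ref{Cell-decom} asserts only density, not equality up to lower dimension). For a concrete obstruction, take any definable $Q\subseteq M$ that is $\tau$-dense and $\tau$-codense in every multi-interval; such $Q$ exist whenever $M$ has a non-trivial extension of odd degree $d$, since the set of $d$-th powers is then a proper subgroup of $M^*$ which Fact~\ref{SpTh} applied to the curve $y^d=x$ shows to be $\tau$-dense, and likewise each of its cosets. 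Let $I,J$ be multi-intervals, $J_-\subsetneq J$ a sub-multi-interval, and set $D=(I\times J)\setminus(Q\times J_-)$. Then $D$ is $\tau^2$-dense in the multi-cell $C=I\times J$ (the complement $Q\times J_-$ has empty interior since $Q$ is codense), yet for every $\bar x\in Q\cap I$ the fibre $D_{\bar x}=J\setminus J_-$ misses the open set $J_-$. Hence the bad set $B=Q\cap I$ has dimension $1$, $C\setminus D$ has dimension $2$, and $E=D\cap\pi_1^{-1}(B)$ has dimension $2=\dim C$: your key lemma fails and the induction does not progress. Fact~\ref{SpTh} only says that conjunctions of quantifier-free types from the different real closures are consistent; it does not forbid two definable sets from being simultaneously dense, and in fact it is what \emph{produces} such behaviour.

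The paper's proof takes an entirely different route that never needs a dimension drop for complements. It applies the one-variable case of Theorem~\ref{Cell-decom} to each fibre $D_{\bar x}\subseteq M$ separately, obtaining multi-intervals $U_{\bar x,1},\dots,U_{\bar x,k_{\bar x}}$ and a finite set $B_{\bar x}$ with $D_{\bar x}$ dense in each $U_{\bar x,j}$. A compactness argument bounds $k_{\bar x}$ and $|B_{\bar x}|$ uniformly and turns the interval endpoints and the isolated points into definable functions of $\bar x$; algebraic boundedness (Corollary~\ref{PRCAlgBound}) is then used to partition the base so that each of these functions is $<_i$-semi-algebraic, after which their graphs assemble directly into the required multi-cells. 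Fibre density holds by construction, so no bad-set analysis ever enters.
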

\begin{proof}

Notice that if $D=D_1 \cup D_2$ and the theorem is known for $D_1$ and $D_2$, then it follows for $D$ by taking a common refinement of the two cell decompositions obtained for $D_1$ and $D_2$.

Let $D$ be a definable set. By Theorem \ref{Cell-decom} for any  $\bar{x} \in \pi_{r-1}(D)$
there are $k_{\bar{x}}$, $U_1, \ldots, U_{k_{\bar{x}}}$
multi-intervals in $M$ and a finite set $B_{\bar{x}}$ such that
$D_{\bar{x}} \subseteq \displaystyle{\bigcup_{j=1}^{k_{\bar{x}}}
U_{\bar{x},j}} \cup B_{\bar{x}}$, and such that $D_{\bar{x}}$ is
$\tau$-dense in $U_{\bar{x},j}$, for all $j \in \{1, \ldots,
k_{\bar{x}}\}$. By definition of multi-intervals $U_{\bar{x},j}=
\displaystyle{\bigcap_{i=1}^n}U_{\bar{x}, j}^i \cap M$, where
$U_{\bar{x}, j}^i$ is a $<_i$-interval in $\clos{M}{i}$.

\medskip

For all $m_1, m_2\in \mathbb N$, let $A_{m_1,m_2}:= \{\bar{x} \in
\pi_{r-1}(D): k_{\bar{x}} = m_1 \; \mbox{and} \; |B_{\bar{x}}|=m_2
\}$. By compactness there are only finitely many $(m_1,m_2)$ for
which $A_{m_1,m_2}$ is non empty.

Then $A_{m_1, m_2}$ is definable with the same parameters as
$D$, and $\pi_{r-1}(D)= \bigcup_{(m_1, m_2)} A_{m_1, m_2}$ (a
finite union). Since $D= \bigcup_{(m_1, m_2)} \pi_{r-1}^{-1}(
A_{m_1, m_2})$, it is enough to show that each
$\pi_{r-1}^{-1}( A_{m_1, m_2})$ can be decomposed according to the
conclusion of the theorem, so assume that $D=\pi_{r-1}^{-1}
( A_{m_1, m_2})$ for some $(m_1, m_2)$.

\medskip

Let $i \in \{1, \ldots, n\}$. For $s \in \{1,2,3\}$, let $f_{s,j}^i(x): A_{m_1, m_2}\mapsto \clos{M}{i}$ such that:

\begin{enumerate}
 \item $f^i_{1,j}(\bar{x}) = y$ if and only if $y$ is the ``$<_i$-smallest extremity in $\clos{M}{i}$'' of the $<_i$-interval $U^i_{\bar{x}, j}$.
 \item $f^i_{2,j}(\bar{x}) = y$ if and only if $y$ is the ``$<_i$-largest extremity in $\clos{M}{i}$'' of the $<_i$-interval $U^i_{\bar{x},j}$.
\item $f^i_{3,j}(\bar{x}) = y$ if and only if $y$ is the $j$-th
point in $B_{\bar x}$ in the order $<_i$.
\end{enumerate}

As $T_{prc}$ is algebraically bounded (see Corollary \ref{PRCAlgBound}), there is a definable partition of the base $A_{m_1,m_2} = \bigcup_{t<p} X_t$ such that on each $X_t$, each of the functions $f^ i_{s,j}$ coincides with a $<_i$-semi-algebraic function. Decreasing $D$ further, we may assume that $p=1$ and that all the functions $f^ i_{s,j}$ are semi-algebraic.

Now, let
\[C_{j}:=\{ (\bar{x}, y) \in M^r:  f^i_{1,j}(\bar{x}) < y < f_{2,j}^i(\bar{x}), \text{ for all }i \}\] and
\[C_{j}^0:=\{ (\bar{x}, y) \in M^r: f^1_{3,j}(\bar{x}) = y \}.\]


Then $D\subseteq \bigcup_j C_{j} \cup \bigcup_j C_{j}^0$ and this decomposition has the required properties.
\end{proof}
%
%

%

\begin{defi} \label{ShelahExpPRC}
Let $\mathcal{U}$ be a monster model of $T_{prc}$. Let $M$ be a
model of $T_{prc}$. Let $N \succeq M$ such that $N$ is
$|M|^+$-saturated. Then $\clos{N}{i}\succeq N$, and $\clos{N}{i}$
is $|\clos{M}{i}|^+$-saturated, for all $i \in \{1, \ldots, n\}$.

Let $\mathcal{L}^* = \mathcal{L} \cup  \{R_C(\bar{x}) : C \; \mbox{is a multi-cell externally} \; \mathcal{L}(N)\mbox{-definable}\} \cup \{P_D(\bar{x}) : D \; \mbox{is} \; \mathcal{L}(M)\mbox{-definable}\}$.
We define $M_N$ to be the structure in the language $\mathcal{L}^*$
whose universe is $M$ and where each $R_C$ and each $P_D$ are interpreted as:
\begin{enumerate}
\item for every $\bar{a} \in M, M_N \models R_C(\bar{a})$ if and only if $\mathcal{U} \models \bar{a} \in C$,
\item for every $\bar{a} \in M, M_N \models P_D(\bar{a})$ if and only if $M \models \bar{a} \in D$.
\end{enumerate}

\end{defi}

\begin{rem}
Observe that $M_N$ is not $M^{Sh}$ because we only add predicats for the externally definable multi-cells, not for all the externally definable sets.

\end{rem}

\begin{thm}\label{QEShelahExpPRC}
The structure $M_N$ admits elimination of quantifiers.
\end{thm}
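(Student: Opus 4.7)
The plan is to verify the standard quantifier-elimination test: for every primitive existential $\mathcal{L}^*$-formula $\exists y\,\phi(\bar x,y)$ with $\phi$ a conjunction of atomic and negated atomic $\mathcal{L}^*$-formulas, produce an equivalent quantifier-free $\mathcal{L}^*$-formula. Since the family $\{P_D\}$ is already closed under Boolean operations and under projection inside $M$, all $P_D$-atoms can be absorbed into a single $P_D(\bar x,y)$. Unfolding the definition of a multi-cell, each atom $R_{C_j}(\bar x,y)^{\varepsilon_j}$ is a conjunction indexed by the order $i\in\{1,\dots,n\}$ of conditions of the form ``$(\bar x,y)\in C_j^i$'' where $C_j^i$ is a cell definable in $\clos{N}{i}$ with parameters from $\clos{N}{i}$.

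Grouping by $i$, the formula becomes $\phi(\bar x,y)\equiv P_D(\bar x,y)\wedge\bigwedge_{i=1}^n\theta_i(\bar x,y)$ where each $\theta_i$ is a Boolean combination of traces on $M^{r+1}$ of $\mathcal{L}_i(\clos{N}{i})$-cells. By o-minimal cell decomposition in $\clos{N}{i}$ relative to external parameters, uniformly in $\bar x$, we may partition $(\clos{N}{i})^r$ into finitely many cells $E_{i,k}$ such that over each $E_{i,k}$, the $y$-fiber of $\theta_i$ is either (a) a non-empty open interval $(f_{i,k}(\bar x),g_{i,k}(\bar x))$, (b) contained in the graph of a semi-algebraic function $h_{i,k}(\bar x)$, or (c) empty. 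Taking the common refinement across $i$ and discarding the lower-dimensional overlaps by Lemma \ref{DimCells}, we obtain a finite cover of $M^r$ by externally definable multi-cells $C_\alpha$ on which the combinatorial type of the fiber is constant; each $C_\alpha$ is itself an $R_{C_\alpha}$ predicate.

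On a given $C_\alpha$ there are two cases for the existence of $y\in M$ with $\phi(\bar x,y)$. In the ``open'' case, the fiber of each $\theta_i$ at $\bar x$ contains a non-empty open $<_i$-interval, so the intersection is a non-empty multi-interval and by Fact \ref{SpTh} meets $M$; the further requirement that the chosen $y$ lie in $D_{\bar x}$ is handled by applying Proposition \ref{UniformDecom} to $D$ to cell-decompose it into multi-cells in which $D$ is $\tau$-dense, and then again invoking Fact \ref{SpTh} on the intersection of these cells with our multi-interval — the resulting non-emptiness criterion is a Boolean combination of conditions comparing the endpoints $f_{i,k}(\bar x)$, $g_{i,k}(\bar x)$ with those produced by the decomposition of $D$, so is quantifier-free in $\mathcal{L}^*$. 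In the ``degenerate'' case, some $\theta_i$ pins $y=h_{i,k}(\bar x)$; then a witness exists iff the $h_{i,k}(\bar x)$ (across those $i$ that are pinning) agree, satisfy the open conditions for the remaining orders, and lie in $D$. By algebraic boundedness of $T_{prc}$ (Corollary \ref{PRCAlgBound}) and $\dcl = \acl = M^{alg}\cap M$, this is expressible as a Boolean combination of $\mathcal{L}(M)$-definable conditions on $\bar x$, hence as a $P$-atom.

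The main obstacle, as I see it, is the bookkeeping in the open case: the $y$-fibers of the $\theta_i$ are externally definable intervals while $D$ is only internally definable, so passing from ``the open multi-interval is non-empty'' (which is given by Fact \ref{SpTh}) to ``the open multi-interval meets $D_{\bar x}$'' requires one to cell-decompose $D$ and iterate; termination is guaranteed by induction on $\dim D$, exactly as in the proof of Theorem \ref{Cell-decom}. With this in hand, the equivalent quantifier-free formula is a finite disjunction over the multi-cells $C_\alpha$ of conjunctions of $R_C$-atoms (encoding which $C_\alpha$ we are in and which open multi-interval is non-empty) and $P_D$-atoms (encoding the $\mathcal{L}(M)$-definable conditions produced above), completing the elimination.
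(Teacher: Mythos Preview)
Your overall architecture is close to the paper's: reduce to projecting $R_C\wedge P_D$ in one variable, cell-decompose in each real closure, split into an ``open'' case and a ``degenerate'' (graph) case, and in the open case use Proposition~\ref{UniformDecom} together with Fact~\ref{SpTh} to reduce non-emptiness of the fiber to order-by-order conditions. That part is essentially correct and matches the paper's use of density.

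The gap is in your degenerate case. You assert that when some $\theta_i$ pins $y=h_{i,k}(\bar x)$, the resulting existence condition is a Boolean combination of $\mathcal L(M)$-definable conditions on $\bar x$, invoking algebraic boundedness of $T_{prc}$ and $\dcl=\acl=M^{alg}\cap M$. But $h_{i,k}$ is $\clos{N}{i}$-definable, with genuinely external parameters; algebraic boundedness over $M$ gives you nothing about such functions. Concretely, take a single order, an external $c\in N\setminus M$, and the $(1,0)$-cell $C=\{(x,y): y=\max(x,c)\}$ (or, if you prefer cells with smooth boundary, $y=x$ on the base $(c,\infty)$). Then the projection of $R_C$ is the externally definable half-line $\{x\in M:x>c\}$, which is \emph{not} $\mathcal L(M)$-definable, so it cannot be a $P$-atom. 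More generally, the set $\{\bar x\in M^r:h_{i,k}(\bar x)\in M\}$ need not be internal, and checking that the pinned value satisfies the $\theta_j$ for $j\neq i$ does not even make sense until you know $h_{i,k}(\bar x)\in M$, since the different real closures $\clos{N}{i}$ do not share their non-$N$ points.

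What the paper does instead is exactly what plugs this hole: Proposition~\ref{prop_extrcf} (relying on honest definitions from \cite{ChSi2}) shows that any externally definable set in $\clos{M}{i}$ is a finite union of sets of the form $U^i\cap D^i$ with $U^i$ open externally definable and $D^i$ internally definable. Applied to each $C^i$, this pushes all the non-open (graph-like) content into the internal side $D$, so one may assume $C$ is $\tau$-open from the outset and the degenerate case never arises. Your appeal to algebraic boundedness is not a substitute for this step; you need \ref{prop_extrcf}, or an equivalent honest-definitions argument, to justify that the external boundary functions $h_{i,k}$ coincide, piecewise on externally definable open sets, with internal functions.
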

\begin{proof}
Let $C$ be an externally $\mathcal L(N)$-definable multi-cell and $D$ an $\mathcal L(M)$-definable set, both inside some $M^r$. Let $\pi$ be the projection to the first $r-1$ coordinates. It is enough to show that $\pi(C\cap D)$ is quantifier-free definable in $M_N$.

First, write $C = \bigcap_{i=1}^n C^i \cap M^r$, where each $C^ i$ is an externally definable multi-cell in $(\clos{M}{i})^r$. By Proposition \ref{prop_extrcf}, we can write each $C^ i$ as a finite union of sets of the form $U^ i \cap D^ i$, where $U^ i$ is an externally definable open subset of $(\clos{M}{i})^r$ and $D^ i$ is definable in $\clos{M}{i}$. Then the trace of $U^ i$ on $M^r$ is also open by density of $M^r$ in $(\clos{M}{i})^r$ and the trace of $D^ i$ on $M^r$ is definable in $M$. The result we want to prove is stable under taking finite unions, so we may assume that $C^ i = U^ i\cap D^ i$ and then by integrating $D^ i$ into $D$, we may assume that $C=C^ i \cap M^r$ is open in $M^r$.

By Proposition \ref{UniformDecom}, we may assume that $D$ is $\tau$-dense in some multi-cell $C_*$ which contains it and such that if $\bar x \in \pi(D)$, then $D_{\bar x}$ is $\tau$-dense in the fiber $(C_*)_{\bar x}$. As $C$ is $\tau$-open, for any $\bar x\in \pi(D)$, if the fiber $(C\cap C_*)_{\bar x}$ is non-empty, then it is open in $C_{\bar x}$ and thus also $(D \cap C)_{\bar x}$ is non-empty. Therefore $\pi(D\cap C)=  \pi(D)\cap \pi(C\cap C_*)$ is quantifier-free definable in $M_N$.
\end{proof}

\begin{cor}\label{ShelahExpNTP}
The structure $M_N$ is NTP$_2$.
\end{cor}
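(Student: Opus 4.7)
The plan is to reduce the claim to the NTP$_2$-ness of $T_{prc}$ using the quantifier elimination of Theorem~\ref{QEShelahExpPRC}. Suppose, for contradiction, that some $\mathcal L^*$-formula, an array $(a_{l,j})_{l,j<\omega}\subset M$, and $k\in\omega$ witness TP$_2$ in $M_N$. By QE, we may take this formula to be quantifier-free, say $\varphi(\bar x,\bar y)$. Every atomic subformula $R_C(\bar t)$ of $\varphi$ is by construction the trace on $M$ of $\bigcap_i \phi_i(\mathcal U)$ with $\phi_i\in \mathcal L(\clos{N}{i})\subseteq \mathcal L(\mathcal U)$, and every $P_D(\bar t)$ is $\mathcal L(M)$-definable. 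Collecting all the parameters used by these atomic pieces into a single tuple $\bar e\in\mathcal U$ yields an $\mathcal L$-formula $\tilde\varphi(\bar x,\bar y;\bar e)$ such that
\[
M_N\models \varphi(\bar a,\bar b)\iff \mathcal U\models \tilde\varphi(\bar a,\bar b;\bar e)\qquad\text{for all } \bar a,\bar b\in M.
\]

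The goal is then to argue that $\tilde\varphi(\bar x,\bar y;\bar e)$, together with the array $(a_{l,j})$, witnesses TP$_2$ in $\mathcal U$, contradicting the NTP$_2$-ness of $T_{prc}$. Path-consistency transfers for free: any $\bar m\in M$ realizing $\{\varphi(\bar x,a_{l,f(l)})\}_l$ in $M_N$ is an element of $\mathcal U$ realizing $\{\tilde\varphi(\bar x,a_{l,f(l)};\bar e)\}_l$. The main obstacle is to transfer row $k$-inconsistency: the $M_N$-inconsistency of a row only rules out realizations in $M$, whereas TP$_2$ in $\mathcal U$ demands no realization anywhere in $\mathcal U$.

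To overcome this, we refine $(a_{l,j})$ via Ramsey to a mutually $M_N$-indiscernible array; by QE and the unfolding this becomes $\mathcal L$-indiscernible over $\bar e$ in $\mathcal U$. Then, using that $M$ is an extension base for $T_{prc}$ and that forking equals dividing there (Fact~\ref{fact_ntp}), a standard non-forking and conjugation argument shows that any putative $\mathcal U$-realization of a row $k$-conjunction of $\tilde\varphi$ could be pulled back to a realization inside $M$, contradicting the $M_N$-row-inconsistency. This yields TP$_2$ for the $\mathcal L$-formula $\tilde\varphi$ with parameters $\bar e$ in $\mathcal U$, contradicting the NTP$_2$-ness of $T_{prc}$ and completing the proof.
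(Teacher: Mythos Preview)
Your reduction to a quantifier-free $\mathcal L^*$-formula and the unfolding to an $\mathcal L$-formula $\tilde\varphi(\bar x,\bar y;\bar e)$ with external parameter $\bar e$ are fine, and you correctly identify the transfer of row $k$-inconsistency as the real difficulty. But the step you offer to resolve it is not an argument. Saying that ``a standard non-forking and conjugation argument shows that any putative $\mathcal U$-realization of a row $k$-conjunction of $\tilde\varphi$ could be pulled back to a realization inside $M$'' is precisely the thing that needs to be proved, and there is no such standard mechanism: the parameter $\bar e$ lies outside $M$, and indiscernibility of the array over $\bar e$ gives you no handle on moving realizations of $\tilde\varphi(\bar x,\bar a;\bar e)$ down into $M$. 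Notice also that your argument never uses that the predicates $R_C$ are traces of \emph{NIP} formulas (they come from quantifier-free formulas in the o-minimal real closures $\clos{N}{i}$); if your sketch worked, it would prove preservation of NTP$_2$ under arbitrary externally-definable expansions with QE, which is not what is being claimed and not what is known. A secondary problem: refining the array via Ramsey to a mutually $M_N$-indiscernible one pushes it into an elementary extension of $M_N$, and your unfolding $\varphi\leftrightarrow\tilde\varphi(\cdot;\bar e)$ is only valid for tuples in $M$ itself.

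The paper's proof (Theorem~\ref{th_shelahexp}) hinges on exactly the NIP hypothesis you omit. One reduces to a conjunction $\phi(x;y)\wedge\psi(x;y)$ with $\phi\in\mathcal L$ and $\psi$ the trace on $M$ of an NIP $\mathcal L$-formula $\tilde\psi(x;y;d)$. Passing to a saturated elementary pair $(N_1,M_1)\succ(N,M)$, honest definitions (Chernikov--Simon) yield an $\mathcal L(M_1)$-formula $\theta(x,y;e)$ with $\theta(M;e)=\psi(M)$ and $\theta(M_1;e)\subseteq\tilde\psi(M_1;d)$. This \emph{containment} is what carries row-inconsistency across: by elementarity of the pair, each row of $\phi\wedge\tilde\psi$ is $k$-inconsistent on $M_1$, hence so is each row of $\phi\wedge\theta$; the paths remain witnessed by the same $a_\eta\in M$. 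One obtains a TP$_2$ pattern for the $\mathcal L$-formula $\phi\wedge\theta$ in $M_1$, contradicting NTP$_2$ of $T_{prc}$. Your proposal is missing this honest-definition step, and without it the row-inconsistency transfer does not go through.
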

\begin{proof}
This follows from Theorem \ref{th_shelahexp} proved in the appendix.
\end{proof}


%

\begin{lemme}\label{RCFInvType}
Let $(G, \star)$ be an algebraic group in an $\aleph_1$-saturated real closed field $(R,<)$.
Then there is an externally definable $<$-open subgroup $H\leq
G$ which has an invariant definable type in the expansion
$R^{Sh}$, where we expand the language to include all the
$R^*$-definable subsets of $R$ for some saturated $R^*\succeq R$.
\end{lemme}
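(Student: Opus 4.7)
The plan is to take $H$ to be the $<$-topological identity component $G^0(R)$ of $G(R)$ and to produce the required type on $H$ in $R^{Sh}$ via the generic type of $H$ as a variety, canonically lifted through the Shelah expansion. Since $G$ is algebraic and $R$ is real closed, $G(R)$ is a semi-algebraic topological group having only finitely many $<$-topological connected components, each of which is open in $G(R)$. The identity component $H = G^0(R)$ is therefore a finite-index normal subgroup of $G(R)$ that is $\mathcal{L}_{ring}(R)$-definable (and so externally definable) and $<$-open, which gives the first part of the statement.

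For the type, first I would let $p_0 \in S(R)$ be the unique complete $\mathcal{L}_{ring}$-type over $R$ of an element of $H$ with $\mathrm{trdeg}$ equal to $\dim H$ over $R$ (the generic of the variety $H$), using $\aleph_1$-saturation to realize it in $R$ or a small extension. Since RCF is o-minimal, Marker--Steinhorn gives that $p_0$ is definable over $R$; denote its defining schema by $d_{p_0}$. I would then lift $p_0$ to a global complete type $p$ on $H$ in $R^{Sh}$ as follows. Any $\mathcal{L}^{Sh}$-formula $\psi(x, \bar{b})$ with $\bar{b} \in R$ arises by substituting external parameters $\bar{d}\in R^*$ into an $\mathcal{L}_{ring}$-formula $\varphi(x, \bar{b}, \bar{d})$; set
\[
\psi(x, \bar{b}) \in p \iff R^* \models d_{p_0}\varphi(\bar{b}, \bar{d}),
\]
equivalently, iff $\varphi(x, \bar{b}, \bar{d})$ belongs to the canonical definable extension $\tilde{p}_0$ of $p_0$ to $R^*$.

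The type $p$ so defined concentrates on $H$ (as $p_0$ does) and is definable in $R^{Sh}$: for each $\mathcal{L}^{Sh}$-formula $\psi(x, \bar{y})$, the set $\{\bar{b} \in R : \psi(x,\bar{b}) \in p\}$ is defined by applying $d_{p_0}$ to the pair $(\bar{y}, \bar{d})$ with $\bar{d}$ the external parameters encoded by the Shelah predicates of $\psi$, which is visibly an $\mathcal{L}^{Sh}(R)$-condition. Invariance of $p$ over $R$ in $R^{Sh}$ is inherited from the $R$-invariance of $p_0$ and the canonicity (uniqueness) of the extension $\tilde{p}_0$, together with the fact that automorphisms of the monster of $\mathrm{Th}(R^{Sh})$ fixing $R$ preserve the external predicates of $R^{Sh}$ as symbols.

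The main obstacle is the well-definedness and definability verification in the final step: the same $\mathcal{L}^{Sh}$-formula $\psi$ can arise from several representations $(\varphi, \bar{d})$, so the prescription for $p$ must be independent of the choice. This consistency, and the fact that the resulting defining schema over $R^{Sh}$ is again $\mathcal{L}^{Sh}(R)$, follow from the canonicity of the definable extension of $p_0$ in the o-minimal setting (Marker--Steinhorn applied to $R \prec R^*$). I expect this verification to be the most delicate piece, but it reduces to standard features of definable types in NIP Shelah expansions established earlier in the paper.
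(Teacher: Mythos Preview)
There is a fundamental misreading of the statement: ``invariant'' here means \emph{$H$-translation-invariant} ($h\cdot p = p$ for every $h\in H$), not automorphism-invariant over $R$. This is what the paper's proof actually checks (``We will show that $p$ is $\tilde H$-invariant'') and what the sole application in Proposition~\ref{PRCProfinite} needs, where the ideal defined by $X\in\mu\iff\overline{X}\notin p$ must be stable under translation by $H$. Your argument addresses only automorphism invariance and never touches translation invariance.

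With $H=G^0(R)$ the approach cannot be repaired: for a definably compact $G$ such as $SO_2(R)$ there is no $G^0(R)$-translation-invariant complete type on $G^0(R)$ at all, since every type over $R$ concentrates on an infinitesimal neighbourhood of some point and is moved by generic rotations. This is why the paper instead takes $H$ to be the genuinely externally definable subgroup of \emph{infinitesimals}: identifying a neighbourhood of $e$ with one of $0$ in $R^n$, it sets $H=U^n$ for $U$ the convex set of infinitesimals, and constructs by hand a lexicographic type $p$ (``$x_1$ as large as possible in $\tilde U$, each $x_k/x_{k-1}$ infinitely small'') whose $H$-invariance follows from the quadratic estimate $|x\star y-(x+y)|\le C|(x,y)|^2$. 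Two smaller gaps in your proposal are also worth flagging: the type $p_0$ you describe is not unique (over a real closed $R$ there are many complete types of maximal transcendence degree in $H$, one for each compatible system of cuts), and an arbitrary such type need not be definable over $R$ --- Marker--Steinhorn requires $R$ to be Dedekind complete in the extension realizing $p_0$, which is not given here.
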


\begin{proof}
As in \cite[Proposition 7.8]{NIP1}, we identify a small
neighborhood of $e$ in $G$ with a neighborhood of zero in $R^n$. If we let $\epsilon$ be
infinitesimal with respect to $R$, then we have $$|x \star y -
(x+y)|\leq C|(x,y)|^2$$ for some $C\in R$ and all $|x|,|y|\leq \epsilon$.
Let $U$ be the convex set of infinitesimals with respect to $R$.
Then $H = \{\bar x, x_i\in U$ for all $i\}$ is a subgroup of $G$.

The set $U$ is definable using parameters in $R^*$, so $U$ is defined by a predicate
$\tilde U$ in $R^{Sh}$ and therefore  $H$ is also defined by a predicate $\tilde H$.

Let $\tilde V$ denote the
set of elements $x$ in $R$ such that $x\geq 1/n$ for some $0<n<\omega$, which by compactness and saturation is also the trace in $R$
of an $R^*$-definable set, so it is definable in $R^{Sh}$.

Let $p(x_1,\ldots,x_n)$ be the type in $\tilde H$ saying that
$x_1$ is as large as possible in $\tilde U$, and for all $k>1$,
$x_k/x_{k-1}$ is infinitely small in $\tilde V$. Using weak o-minimality
of $R^{Sh}$ we know that $p$
determines a (definable) complete type.

We will show that $p$ is $\tilde H$-invariant, so that $\tilde H(R)$ and $p$ satisfy the statement of the lemma. Let $\bar a
=(a_1,\ldots, a_n)\in \tilde H(R^*)$ and let $\bar b$ realize $p$
over $R^*$. We have to show that $\bar y := \bar a \star \bar b$
realizes $p$ over $R^*$.

All coordinates of $\bar a$ and
all $b_k^2$, are infinitesimal with respect to each $b_k$, so
$\bar a \star \bar b = \bar a + \bar b + \bar \epsilon$, where
$|\bar \epsilon| \leq C\cdot b_1^2$. Now $y_1 = b_1 + a_1
+\epsilon_1$, $a_1\in R^*$, $b_1$ as as large as possible in $U$
and $|\epsilon_1|\leq b_1^2$ which is much less than $b_1$, so
$\tp(y_1/R^*)\in U$ satisfies $\tp(b_1/R^{*})$.

In the same way, we have
$$\frac{y_k}{y_{k-1}} = \frac
{b_k+a_k+\epsilon_k}{b_{k-1}+a_{k-1}+\epsilon_{k-1}}$$ hence
$$\frac{1/2 b_k}{2b_{k-1}} \leq \frac {y_k}{y_{k-1}} \leq
\frac{2b_k}{1/2 b_{k-1}}$$ from which it follows that
$y_k/y_{k-1}$ realizes over $R$ the type of an infinitesimally
small element in $\tilde V$. So $\bar y$ realizes $p$, as required.
\end{proof}

\begin{prop} \label{PRCProfinite}
Let $M$ be a model of $\PRCB_{prc}$. Let $G$ be an algebraic group
definable in $M$, let $K\leq G$ be a type definable subgroup and
$L= \overline{K}$. Then $K$ has bounded index in $L$, and $L/K$
with the logic topology is profinite.
\end{prop}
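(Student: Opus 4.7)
Plan. My approach is to apply Proposition \ref{TdefGps} inside a suitable $\tau$-open subgroup of $L$, working in the Shelah-type expansion $M_N$ of Definition \ref{ShelahExpPRC}, which has quantifier elimination by Theorem \ref{QEShelahExpPRC} and is NTP$_2$ by Corollary \ref{ShelahExpNTP}. First I would observe that $L$ is a type-definable subgroup of $G$: continuity of the group operation in each $\tau_i$ makes $G(M)$ a topological group in the multi-topology, so $L = \overline{K}$ is a subgroup, and writing $K = \bigcap_j K_j$ with $K_j$ an $\LC(M)$-definable basis of the type, Corollary \ref{multiclosure} expresses each $\overline{K_j}$ as a finite union of $\LC(M)$-definable multi-cells, so $L = \bigcap_j \overline{K_j}$ is type-definable over $M$.

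Next I would construct an $M_N$-definable left-invariant S1 ideal on a $\tau$-open subgroup. Applying Lemma \ref{RCFInvType} in each real closure $\clos{N}{i}$ yields an externally definable $\tau_i$-open subgroup $H_i \leq G(\clos{N}{i})$ carrying a definable $H_i$-invariant complete type $p_i$. Set $H := L \cap \bigcap_{i=1}^n (H_i \cap M)$; this is a $\tau$-open subgroup of $L$ definable in $M_N$ via the $R_C$-predicates. Amalgamating the quantifier-free restrictions of the $p_i$ across the $n$ orderings via Fact \ref{SpTh} produces a single $M_N$-definable $H$-invariant complete type $p$ on $H$, and the ideal $\mu$ of $M_N$-definable subsets of $H$ not in $p$ is then $M_N$-definable, left-$H$-invariant, and S1 (the last because for a complete type a finite conjunction is in the type iff each conjunct is).

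I would then verify that $K \cap H$ is $\mu$-wide --- using that density of $K$ in $L$ combined with $\tau$-openness of $H$ forces every $M_N$-definable superset of $K \cap H$ to contain the generic infinitesimal-neighborhood realizations of $p$ --- and apply Proposition \ref{TdefGps} to $K \cap H$ inside $H$, obtaining $K \cap H = \bigcap_n G_n$ with $G_n$ relatively $M_N$-definable subgroups of $H$. Each $G_n$ inherits $\mu$-wideness from $K \cap H$ and must therefore have finite index in $H$: otherwise one could extract by Ramsey an $M_N$-indiscernible sequence $(a_k)_{k<\omega}$ in $H$ with the cosets $a_k G_n$ pairwise distinct, so that $a_k G_n \cap a_{k'} G_n = \emptyset \in \mu$ while each $a_k G_n$ is wide (translate of a wide set), contradicting S1. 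Hence $H/(K \cap H) = \varprojlim_n H/G_n$ is profinite. Since $K$ is $\tau$-dense in $L$ and $H$ is $\tau$-open in $L$ we have $L = K \cdot H$, giving a logic-topological bijection $L/K \cong H/(K \cap H)$, so $L/K$ is profinite and in particular $K$ has bounded index in $L$.

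The main obstacle will be producing the single $H$-invariant definable complete type $p$ from the $p_i$'s across different orderings, and verifying $\mu$-wideness of $K \cap H$; this hinges on showing that $\tau$-density of $K \cap H$ in $H$ is strong enough to force every $M_N$-definable superset of $K \cap H$ to contain the generic realizations of the amalgamated invariant type $p$.
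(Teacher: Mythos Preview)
Your overall strategy---pass to $M_N$, use Lemma \ref{RCFInvType} in each real closure, amalgamate to a single invariant type, build a definable invariant S1 ideal, and apply Proposition \ref{TdefGps}---is exactly the paper's. But there is a genuine gap at the S1 step.

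The amalgamated $p=\bigcup_i p^i$ is \emph{not} a complete type in $M_N$. By Theorem \ref{QEShelahExpPRC}, $M_N$-definable sets are Boolean combinations of external multi-cells $R_C$ and $\LC(M)$-definable sets $P_D$; the partial type $p$ decides every $R_C$ (each $p^i$ being complete in $\clos{N}{i}$), but it does \emph{not} decide the $P_D$'s: by Theorem \ref{Cell-decom} an $\LC(M)$-definable $D$ is typically only $\tau$-dense in the multi-cells covering it, so both $D$ and $\neg D$ remain consistent with $p$. Hence $\{X: X\notin p\}$ is not even closed under finite unions, and your completeness-based S1 argument collapses. The paper instead sets $X\in\mu \iff \overline{X}\notin p$; since $\overline{X}$ is a finite union of multi-cells (Corollary \ref{multiclosure}), $p$ does decide it, and $\mu$ is a genuine definable $H$-invariant ideal. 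S1 is then proved using the type-amalgamation theorem (Fact \ref{thamalgamation}): from $\phi(x,a_1)\cup p$ and $\phi(x,a_2)\cup p$ each consistent, one amalgamates to get $\phi(x,a_1)\wedge\phi(x,a_2)\cup p$ consistent, hence $\overline{\phi(x,a_1)\wedge\phi(x,a_2)}\in p$. This amalgamation argument is the missing idea.

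A secondary gap: you omit the reduction to $\dim G=\dim K$ via the Zariski closure of $K$. Without it, $L$ may have empty interior in $G$; then your $H:=L\cap\bigcap_i(H_i\cap M)$ is only type-definable (so Proposition \ref{TdefGps}, which needs a definable ambient group, does not apply), and $K\cap H$ cannot be $\mu$-wide since $p$ has full dimension in $G$. After the reduction the infinitesimal group $\bigcap_i H^i\cap M$ already lies inside $L$ and is honestly $M_N$-definable. Finally, the paper closes by noting that the $\LC$- and $\mathcal L^*$-logic topologies on $L/K$ coincide (both compact Hausdorff, one refining the other), a point you should also address.
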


\begin{proof}

Let $\overline{K}^z$ be the Zariski closure of $K$. Then
$\overline{K}^z$ is an algebraic subgroup of $G$,
$\overline{K}^z$ is type-definable and $\dim(\overline{K}^z)=
\dim(K)$.

So replacing $G$ by $\overline{K}^z$ we can suppose that $\dim(G)
= \dim(K) := m$. Observe that $K$ has bounded index in $L$.

Let $N \succ M$ be $|M|^+-$saturated. We now work in the structure
$M_N$ defined in Definition \ref{ShelahExpPRC}. It is NTP$_2$ by
Corollary \ref{ShelahExpNTP}. Suppose we have $n$-definable
orders. For each $i$, we will define, in the ordered $<_i$-ring
language $\clos{\mathcal L}{i}$ (using externally definable sets)
a definable set $H^i$, and a type $p^i$ in $\clos{N}{i}$ as
follows.

For each $i \in \{1, \dots, n\}$, let $H^i$ be the $\mathcal
L^{ext}$-definable subgroup  of $G$, and let $p^i$ be the
invariant $\mathcal L^{ext}$-definable type given by Lemma
\ref{RCFInvType}. So $H^i$ is the trace of an
$\clos{N}{i}$-definable set in $\clos{M}{i}$.

Let $H:= \displaystyle{\bigcap_{i=1}^n} H^ i \cap M$, and let $p
:= \displaystyle{\bigcup_{i = 1}^n p^i}$. By Fact \ref{SpTh}, $H
\not = \emptyset$ and $p$ is finitely consistent in $M$.

We have that $H$ is an externally definable set in $M$, and each
$p^i$ is definable in $(\clos{N}{i})^{ext}$, so $p$ is a definable
partial type in $M_N$. In a similar way we also obtain that $p$ is
$H$-invariant.

So $H$ is a $\tau$-open definable subgroup of $L$, and since $K$
is $\tau$-dense in $L$, all the cosets intersect $K$ and we obtain
that $H/H\cap K\cong L/K$.

We define an ideal $\mu$ over $H$ by $X \in \mu$ if $\overline{X}
\not \in p$. This ideal is definable and $H$-invariant.

 \begin{claim}
$\mu$ is S1 over $H$.
\end{claim}

\begin{claimproof}
If $X$ is a definable set, by Theorem \ref{Cell-decom} and
Corollary \ref{multiclosure} it follows that $\overline{X}\in p$
if and only if $X \cup p$ is consistent. Let $\phi(x,y)$  be a
formula and let $(a_j)_{j \in \omega}$ be indiscernible over $H$
such that $\phi(x, a_j) \not \in \mu$, for all $j\in \omega$. Then
all of the formulas $\overline{\phi(x, a_j)}$ are in $p$, and for
each $j$ we have that $\phi(x,a_j) \cup p$ is consistent.

Let $c_1$ and $c_2$ be such that $c_1 \models \phi(x, a_1) \cup p$, and $c_2 \models \phi(x, a_2) \cup p$, and such that $c_1$ and $c_2$ are algebraically independent over $\{a_1, a_2\}$.
By Fact \ref{thamalgamation} $ \tp(c_1/a_1) \cup \tp(c_2/a_2) \cup p$ is consistent.
It follows that $\phi(x, a_1) \cup \phi(x, a_2) \cup p$ is consistent, and by $\tau$-completeness of $p$ we have
\[\overline{\phi(x, a_1) \wedge \phi(x, a_2)} \in p.\]

By indiscernibility \[\overline{\phi(x, a_i) \wedge \phi(x, a_j)} \in p\] for all $i \not = j$, so $\phi(x, a_i) \wedge \phi(x, a_j) \not \in \mu$, for all $i \not = j$.
\end{claimproof}

Now, ${\overline{H \cap K}}\in p$ so that $H \cap K$ is
$\mu$-wide. It follows by Theorem \ref{TdefGps} that $H \cap K$ is
an intersection of definable groups. Hence $H/H\cap K$ with the
$\mathcal L^*$-logic topology (see Definition \ref{ShelahExpPRC})
is profinite, and then so is $L/K$ which is isomorphic to it.

The $\mathcal L$-logic topology on $L/K$ is compact and Hausdorff
and is weaker than the $\mathcal L^*$-logic topology which is also
compact and Hausdorff. It follows that both topologies coincide.
In particular $L/K$ with the $\mathcal L$-logic topology is
profinite so that $K=\bigcap G_i$ where $G_i$ is $\mathcal
L$-definable and $G_i\cap L$ is a subgroup of $L$.
\end{proof}




\section{Definable groups with f-generics in PRC}\label{STheorem}

\begin{defi}
Let $(M, <_1, \ldots, <_n)$ be a model of $T_{prc}$. We say that a
definable set $X \subseteq M^m$ is \emph{multi-semialgebraic} if
$X$ is a union of multi-cells in $M^m$. Let $(G, \cdot_G)$ be an
$M$-definable group. We say that $G$ is \emph{multi-semialgebraic}
if $G$, the graph of $\cdot_G$ and of the inversion of $G$ are
multi-semialgebraic.
\end{defi}


\begin{thm}\label{th_main}
Let $M \models T_{prc}$ be $\omega$-saturated. Let $G$ be an
$M$-definable group with strong f-generics. Then there is a finite
index $M$-definable subgroup $G_1 \leq G$, a finite $K\leq
G_1$ central in $G_1$, and an algebraic group $H$ such that there
is a local group homomorphism from a generic subset $W_1^*$ of
$G_1$ and a ``finite index'' subset of a $\tau$-open neighborhood
$W_1$ of the identity of $H(M)$.

Furthermore, $G_1(M)$ is definably isomorphic to a
finite index subgroup of a multi-semialgebraic group $H'(M)$,
where $H'(M)$ admits a definable $\tau$-manifold structure, where
each open set in the cover maps via a definable local group
homeomorphism to a neighborhood $U_3$ of the identity of $H(M)$.
\end{thm}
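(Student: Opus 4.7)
My plan is to combine the three main tools developed earlier in the paper -- Theorem \ref{algebraic group chunk}, Theorem \ref{th_stab} and Proposition \ref{PRCProfinite} -- following the blueprint sketched in the introduction and adapting the real-closed and pseudofinite arguments of \cite{HrPi}.

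The first step is to verify the hypotheses of Theorem \ref{algebraic group chunk}. The theory $T_{prc}$ is algebraically bounded by Corollary \ref{PRCAlgBound}, and any model is definably closed in its algebraic closure by \cite[Lemma 2.6]{Mon}. The ideal $\mu_M$ of formulas not extending to a global strongly f-generic type is $M$-invariant, left $G$-invariant, and S1 on $G$ by Lemma \ref{lem_s1}; using Lemma \ref{lem_biexist} I may pass to a strong bi-f-generic type so that $\mu_M$ is right-invariant as well. Condition (F) is automatic by Fact \ref{fact_ntp}(3). Theorem \ref{algebraic group chunk} then produces an $M$-definable algebraic group $H$ and a definable finite-to-one group homomorphism from a type-definable wide subgroup $D$ of $G$ into $H(M)$; quotienting $H$ by the finite central kernel as at the end of that proof, I may assume the resulting map $\phi: D \to H(M)$ is injective. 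By Theorem \ref{th_stab}, $D \supseteq G^{00}_M$.

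Next I set $L := \phi(G^{00}_M)$, a type-definable subgroup of $H(M)$, and apply Proposition \ref{PRCProfinite} to obtain that $\overline{L}/L$ is profinite in the logic topology. Because $M$ is $\omega$-saturated and $\phi$ is type-definable, compactness lets me turn this into definable data: I obtain a $\tau$-open neighborhood $W_1$ of $e_H$ in $H(M)$ and a definable generic $W_1^\ast \subseteq G$ meeting $D$ on a wide set, such that $\phi|_{W_1^\ast}$ is realized as a definable partial homomorphism landing inside a finite union of cosets of a definable finite-index subgroup of $\overline{L} \cap W_1$. The finite index subgroup $G_1 \leq G$ is then chosen as a definable subgroup of finite index containing both $W_1^\ast \cdot W_1^\ast$ and $G^{00}_M$; the finite central subgroup $K \leq G_1$ of the statement absorbs the residual ambiguity coming from the kernel of the local lift, giving the first assertion.

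For the ``furthermore'' clause I would invoke the cell-decomposition machinery of Section \ref{SPRC}. Applying Theorem \ref{Cell-decom} and Proposition \ref{UniformDecom} to $W_1$ and to the graph of multiplication in $H$ near the identity, I decompose the relevant region into finitely many multi-cells on which multiplication and inversion are polynomial and hence multi-semialgebraic. Gluing finitely many translates of $W_1^\ast$ by a set of coset representatives of $G^{00}_M$ in $G_1$, and transporting the multi-cell atlas along $\phi$, yields the multi-semialgebraic group $H'(M)$ together with a definable $\tau$-manifold atlas whose charts are local group homeomorphisms into a neighborhood $U_3$ of $e_H$, and realizes $G_1(M)$ as a finite index subgroup of $H'(M)$. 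The main obstacle I expect is the bridge between the purely type-definable map $\phi$ on $G^{00}_M$ and a genuinely definable local homomorphism on a generic definable set of $G_1$: this requires using the profiniteness of $\overline{L}/L$ in an essential way to collapse the inverse limit to a single finite-index definable approximation inside the $\omega$-saturated $M$, and checking that the multi-cell decompositions of the multiplication graph are compatible on overlaps, which is the PRC analogue of the semialgebraic manifold constructions performed in \cite{HrPi} for real closed fields.
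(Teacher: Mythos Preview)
Your overall strategy is right and matches the paper's: apply Theorem \ref{algebraic group chunk} to obtain an algebraic group $H$ and a type-definable $K \leq G \times H$ with both projections finite-to-one (you phrase this as a map $\phi$), use Theorem \ref{th_stab} to see $\pi_1(K) \supseteq G^{00}_M$, and invoke Proposition \ref{PRCProfinite} on $\overline{K_H}/K_H$ where $K_H = \pi_2(K)$. The first paragraph of the ``furthermore'' part, passing from the profinite quotient to a single definable finite-index approximation, is also along the right lines.

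The genuine gap is in your construction of $H'$. Your plan --- apply cell decomposition to the graph of multiplication in $H$ and ``transport the atlas along $\phi$'' over finitely many translates by coset representatives --- does not work as stated: the definable bijection between pieces of $G$ and $H$ is only available on a definable set $X_0 \supseteq K$ on which both projections are injective, and the coset representatives $\alpha_k \in G$ typically lie outside $\pi_1(X_0)$, so there is nothing to transport. The paper's solution is quite different and more explicit. One writes each $\alpha_k$ as a $\cdot_G$-product $d_1^* \cdots d_m^*$ with $d_l \in X$; then conjugation by $\alpha_k$ on a small piece of $G$ corresponds, via the bijection, to iterated conjugation by the $d_l$ inside $H$, which is \emph{algebraic}. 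Using this, the group law on $G \cong W/E$ (with $W = (U_3 \cap X) \times \{0,\ldots,p-1\}$ and $E$ encoding the coset identifications) is expressed by an explicit algebraic equation $(E\Gamma)$ in $H$-coordinates. One then sets $W^{cl} = U_3 \times \{0,\ldots,p-1\}$, extends both $E$ and $(E\Gamma)$ to $W^{cl}$ by the same algebraic formulas, and checks through several claims (well-definedness, $E^{cl}$-equivariance, existence and uniqueness of products) that this yields a group $H' = W^{cl}/E^{cl}$ into which $G$ embeds with finite index. Cell decomposition is not used here; the multi-semialgebraic structure comes from arranging the approximating sets $U_k \supseteq \overline{K_H}$ to be quantifier-free definable and $\tau$-open, together with the algebraicity of the operations in $H$.
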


\begin{proof}
Let  $\mu_M$ be the ideal of formulas which do not extend to a
strongly bi-f-generic type over $M$ (which exist by Lemma
\ref{lem_biexist}). So $\mu_M$ is $M$-invariant, S1 (by Lemma
\ref{lem_s1}),  and invariant under both left and right
translations by elements of $G$. Let $q\in S(M)$ be $\mu_M$-wide.

By Theorem \ref{th_stab}, $Stab(q) = G^{00}_M$ and $\mu_M$-almost all elements of $G^{00}_M$ are in $St(q)$.

Let $a\in G^{00}_M$ be such that $\tp(a/M)$ is $\mu_M$-wide. Let $b\models q$ such that $\tp(b/Ma)$ is $\mu_M$-wide and $\tp(ab/M)=q$.

By Theorem \ref{algebraic group chunk}, there is an algebraic
group $H$, and a type definable subgroup $K$ of $G\times H$ such
that $\pi_1(K)$ contains $G^{00}_M$ and $\pi_2(D)$ is of finite
index.

As in the proof of \ref{algebraic group chunk}, we can assume that
$\pi_2$ is injective on $K$ (otherwise we can replace $G$ by
$G/\pi_2(K_2)$ where $K_2 = \pi_2^{-1}(e)\cap K$).

Now choose a symmetric definable $X_0$ such that $K \subseteq X_0
\subseteq G\times H$, and  such that $\pi_1$ and $\pi_2$ are
injective on $X_0^4$. Replacing $G$ by a subgroup of finite index,
we can assume that $\pi_1(X_0)$ generates $G$. By Proposition \ref{prop_normal}, $K$ is normal in the group
generated by $X_0$ and $X_0^n$ is medium for any $n$. Observe
that $\pi_1(X_0)\subseteq G$ is definably isomorphic to
$\pi_2(X_0)\subseteq H$.
In $\pi_2(X_0)$, the multi-topology $\tau$ is definable and the operations in $H$ are continuous.

Since working with projections becomes quite messy, we will use
notation in the following way:

\begin{itemize}
\item Any element of $X_0$ will be written as $(x^*, x)$ where
$x^*\in G$ and $x\in H$. So $x^*=\pi_1(\pi_2^{-1}(x))$ for any
element $x$ in $\pi_2(X_0)\subseteq H$. We will also do this for
sets, so that $A^*=\pi_1(\pi_2^{-1}(A))$ for any $A\subset
\pi_2(X_0)$.

\item All non $^*$-elements will be assumed to belong to $H$. We
will use Greek letters for elements in $G$ which may not be in
$\pi_1(X_0)$.

\item We will refer to $\pi_2(K)$ by $K_H$.

\item We will mostly be working inside $H$, so we will drop the
index in $\cdot_H$.

\end{itemize}

\bigskip

In $H$ we have that $\pi_2(X_0) \cap \overline{K_H}$ is generic in
$\overline{K_H}$ and by Proposition \ref{PRCProfinite},
$\overline{K_H}/K_H$ is profinite, so there is a $\LC$-definable
set $X \subseteq \pi_2(X_0)$  such that $K_H':= X \cap
\overline{K_H}$ is a subgroup of finite index  of
$\overline{K_H}$, and $(K_H')^*$ is a type-definable subgroup of
bounded index of $G$ ($G^{00}_M \subseteq (K_H')^*$). By passing
to a finite index subgroup of $G$, we may assume that $X^*$
generates $G$.

\begin{claim}
We may assume that $K_H'$ is a normal subgroup of $\overline{K_H}$, in fact normalized by $X$, and that $(K_H')^*$ is a normal subgroup of $G$.
\end{claim}

\begin{claimproof}
Let $r$ be the smallest integer such that every $\gamma^* \in G$ is the $\cdot_G$-product of $r$ elements in $X^*$.
Define $Y_1, \ldots, Y_r$ (in $H$) such that:

\[Y_1:= \bigcap_{\gamma \in X} \gamma X {\gamma^{-1}},\]
\[Y_l:= \bigcap_{\gamma \in X} \gamma Y_{l-1} \gamma^{-1}, \mbox{for}  \; 2 \leq l \leq r.\]

Then $(Y_r)^*$ is normalized by $G$, hence $Y_r$ is normalized by $X$ and so is $Y_r \cap \overline{K_H}$. We can now replace $X$ by $Y_r$ and $K_H'$ by $Y_r \cap \overline{K_H}$.
\end{claimproof}

Now, $\overline{K_H}$ is the intersection of multi-semialgebraic
sets in $H$. We can define a decreasing sequence $(U_k:k<\omega)$
of quantifier-free definable symmetric sets, such that:

-- $U_0=\overline{X}$;

-- $({U_{m+1}\cap X})^3\subseteq {U_{m}\cap X}$, for each $m<\omega$;

-- $g({U_{m+1}\cap X})g^{-1} \subseteq U_m
\cap X$, for each $m<\omega$ and $g\in X$;

-- $\overline{K_H}=\bigcap_{k \in \omega} U_k$.

Note that by density of $X$ in $U_0$ and by continuity of the operations, we also have $({U_{m+1}})^3
\subseteq {U_{m}}$ and $g{U_{m+1}}g^{-1} \subseteq U_m$ for all $g\in X$.

\begin{claim}
We may assume that $U_m$ are multi-open, for $m\geq 1$.
\end{claim}

\begin{claimproof}
The type definable group $\overline{K_H}$ has non empty interior in $H$ (since it has bounded index). The operations are continuous in $H$ and by definition $X$
is dense in $U_0$. It follows that, since $U_{m+1}\cdot
\OK_H\subseteq U_{m}$, every point in $U_{m+1}$ has a neighborhood
contained in $U_m$, so $U_{m+1}$ is entirely contained in the
interior of $U_m$. Replacing each $U_m$ by its interior, we preserve the properties and the claim holds.
\end{claimproof}

We have a local group homomorphism between $(U_3\cap X)^*$ and
$U_3\cap X$.

The rest of the proof will be devoted to define a group $W/E$
isomorphic to $G$ which can be covered by finitely many copies
$W_i$ of $U_3\cap X$, then look at the ``open closure'' $W_i^{cl}$
of each $W_i$ which will be isomorphic to $U_3$, and finally, we
will induce a group structure on the ``corresponding'' group
$ W^{cl}/E^{cl}$ which will then give us the group $H'$
as in the statement of the theorem.

Select points $\{\alpha_k :k<p\}$ in $G$ such that \[G =
\displaystyle{\bigcup_{k<p} \alpha_k \cdot_G (U_4\cap X)^*}.\]

Note that for any $x^*\in G$, there is $k<p$ such that $x^*\in
\alpha_k \cdot_G (U_4\cap X)^*$ and then $x^*\cdot_G (U_4\cap
X)^{*}\subseteq \alpha_k \cdot_G (U_3 \cap X)^{*}$.

Let $m$ be the smallest integer such that every $\alpha_i$ is the
$\cdot_G$-product of $m$ elements in $(U_3\cap X)^*$.
\bigskip

\begin{claim} For each $i$, the conjugation map $f_{i}: x\mapsto
\pi_2(\pi_1^{-1}(\alpha_i\cdot_G x^*\cdot_G (\alpha_i)^{-1}))$ is
an algebraic map from $U_{k+m}\cap X$ to $U_{k}\cap X$ for
$k\geq 3$.
\end{claim}

\begin{claimproof} Let $\alpha_i=d_1^*\cdot_G \dots \cdot_G d_m^*$, with each $d_l$ in $X$. Then for any $l$,
for any $j\geq 3$ and $x\in U_j\cap X$ the map $x\mapsto d_l^{-1} x d_l$
is algebraic (as $H$ is algebraic) and by
hypothesis $d_l^{-1} x d_l\in U_{j-1}\cap X$. Since
\[
\pi_2(\pi_1^{-1}(\alpha_i x^* (\alpha_i)^{-1}))=d_l\cdot \dots
\cdot d_1\cdot x \cdot d_1^{-1} \cdot \dots \cdot d_l^{-1},
\]
the function $f_i$ is algebraic as a composition of algebraic functions.
\end{claimproof}

Select points $\{b_i :i<l\}$ in $U_3\cap X$ such that \[U_3\cap X
=\displaystyle{\bigcup_{i<l} b_i \cdot (U_{m+3}\cap X)}.\]

For $j<p$ and $r<l$, define $\alpha_{(j,r)}\in \{\alpha_k:k<p\}$ and $t_{(j,r)}\in (U_3 \cap X)$ such that (where all the products are in $G$):
\[
\alpha_j^{-1} b_r^{*}  \alpha_j = \alpha_{(j,r)} t^{*}_{(j,r)}.
\]

Let $W=(U_3 \cap X) \times \{0,\ldots, p-1\}$ and for $k<p$,
define $W_k = (U_3\cap X) \times \{k\}$.

Define an equivalence relation $E$ on $W^2$ by $(x,i)E(y,j)$ if $\alpha_i\cG x^* = \alpha_j \cG y^*$. We then have
$$(x,i)E(y,j) \iff (y^*)\cG (x^*)^{-1} = \alpha_j^{-1}\cG \alpha_i.$$
If this happens, then $\alpha_j^{-1}\cG \alpha_i$ lies in
$(U_2\cap X)^*$ and can be written as $w_{ij}^*$ for some
$w_{ij}\in U_2 \cap X$. When this is not the case, say that
$w_{i,j}$ is undefined.

Note that we have a definable bijection $\phi: W/E\to G$ sending
$(x,i)$ to $\alpha_i \cG x^*$.

\bigskip

We will now define
a multi-semialgebraic group, which in a way will be the
$\tau$-topological closure of $W/E$. The topology $\tau'$ of $G$ will be induced by the above bijection.

Let $W^{cl} = U_3 \times \{0,\ldots,p-1\}$ and $W^{cl}_k =
U_3 \times \{k\}$. We equip each $W^{cl}_k$ with the
$\tau$-topology. Then $W_k$ is dense in $W^{cl}_k$.

We now define a relation $E^{cl}$ on $W^{cl}$ as follows: given
$(x,i),(y,j)\in W^{cl}$, we have $(x,i)E^{cl}(y,j)$ if and only if
$w_{ij}$ is defined and $yx^{-1} = w_{ij}$.

\begin{claim} $E^{cl}$ is an equivalence relation.
\end{claim}
\begin{claimproof} Reflexivity holds as $w_{ii}=e$ for all $i$. Whenever $w_{ij}$ is defined, then so is $w_{ji}$ and $w_{ji}=w_{ij}^{-1}$. This implies symmetry. Finally, assume that $(x,i)E^{cl} (y,j)$ and $(y,j)E^{cl} (z,k)$, then $zx^{-1} = w_{jk}w_{ij} \in U_2 \cap X$ (as $zx^{-1}\in U_2$ and $w_{jk}w_{ij}\in X$). Then $w_{ik}$ is defined and equal to $w_{jk}w_{ij}$ and thus $(x,i)E^{cl} (z,k)$.
\end{claimproof}

By construction $W/E$ embeds in $W^{cl}/E^{cl}$. We now define a
group structure on $W^{cl}/E^{cl}$. First consider
$(x,i),(y,j),(z,k)\in W$ and write $x=b_r w$ with $w\in
U_{m+3}\cap X$. We then have, where all the products are
understood in $G$:
\begin{align*}
& & \alpha_i x^*  \alpha_j y^*  &= \alpha_k  z^* \\
&\iff &  \alpha_i  b_r^*  w^* \alpha_j  y^* &= \alpha_k  z^* \\
&\iff & \alpha_i  \alpha_j  \alpha_{(j,r)} t^*_{(j,r)} f_j(w)^* y^* &= \alpha_k z^* \\
&\iff & t_{(j,r)}^* f_j(w)^*y^*(z^*)^{-1} &= \alpha_{(j,r)}^{-1} \alpha_j^{-1} \alpha_i^{-1} \alpha_k.
\end{align*}

When such an equation holds, we define $\epsilon(i,j,k,r)$ as $\alpha_{(j,r)}^{-1} \alpha_j^{-1}\alpha_i^{-1}\alpha_k\in U_1\cap X$. Let $\Gamma\in W^3$ be the pullback of the graph of multiplication on $W/E \cong G$ via the canonical projection. Then $((x,i),(y,j),(z,k))\in \Gamma$ if and only if $\epsilon(i,j,k,r)$ is defined and writing $x=b_rw$, we have:
\begin{equation}
\tag{$E\Gamma$}
t_{(j,r)} f_j(w)yz^{-1} = \epsilon(i,j,k,r).
\end{equation}

We define $\Gamma^{cl}$ on $W^{cl}$ by
$((x=b_rw,i),(y,j),(z,k))\in \Gamma^{cl}$ if $(E\Gamma)$ holds. We
need to check that this is well defined, {\it i.e.}, does not
depend on the decomposition of $x$ as $b_rw$. So assume that
$x=b_rw=b_s w'$. Then $w' = b_s^{-1} b_r w$. Assume that
$t_{(j,r)} f_j(w)yz^{-1} = \epsilon(i,j,k,r)$. On a small
neighborhood of $(w,y,z)$ we can find $(w_0,y_0,z_0)$, all points
lying in $X$ such that $t_{(j,r)}
f_j(w_0)y_0z_0^{-1}=\epsilon(i,j,k,r)$ (as all operations are
continuous). Set $w'_0 =  b_s^{-1} b_r w_0$, then $w'_0$ is close
to $w'$, hence in $U_{m+3}\cap X$ and we have $t_{(j,s)}
f_j(w'_0)y_0z_0^{-1}=\epsilon(i,j,k,s)$ (in particular
$\epsilon(i,j,k,s)$ is defined). Letting $(w_0,y_0,z_0)$ converge
to $(w,y,z)$, we obtain $t_{(j,s)}
f_j(w')yz^{-1}=\epsilon(i,j,k,s)$ as required.

A similar argument shows that $\Gamma^{cl}$ is
$E^{cl}$-equivariant: if say $(z,k)E^{cl} (z',k')$, then we have
$z'= w_{ii'}z$ and we conclude as above that $((x,i),(y,j),(z,k))$
is in $\Gamma^{cl}$ if and only if $((x,i),(y,j),(z',k'))$ is in
$\Gamma^{cl}$. Therefore $\Gamma^{cl}$ induces a ternary relation
on the quotient $W^{cl} / E^{cl}$. Note that on each
$W^{cl}_i\times W^{cl}_j \times W^{cl}_k$, $\Gamma^{cl}$ is the
closure of $\Gamma$.

\begin{claim} $\Gamma^{cl}$ induces the graph of a function $W^{cl}/E^{cl} \times W^{cl}/E^{cl}  \to W^{cl}/E^{cl}$.
\end{claim}

\begin{claimproof} First, assume that $(x,i),(y,j) \in W^{cl}$, $x=b_rw$. Then for a given $j$, the equation
$t_{(j,r)} f_j(w)yz^{-1}=\epsilon(i,j,k,r)$ can have at most one
solution in $z$. If we can find $(z',k')$ such that $t_{(j,r)}
f_j(w)yz'^{-1}=\epsilon(i,j,k',r)$ also holds, then
$\epsilon(i,j,k',r)w_{kk'}=\epsilon(i,j,k,r)$ and so
$z'^{-1}w_{kk'}=z^{-1}$ which implies $(z,k)E^{cl} (z',k')$. This
shows that the image is unique.

It remains to show existence. Take $(x,i),(y,j)\in W^{cl}$. Take a
small neighborhood $U_*$ of the identity included in
$\overline{K_H}$. Then there are some $k$  and $r$ such that for any
$x_0\in xU_*$ and $y_0\in yU_*$, there is $(z_0,k)$ with
$((x_0,i),(y_0,j),(z_0,k))\in \Gamma$ and $x_0$ can be written as
$b_r w_0$ with $w_0\in U_{m+4}\cap X$. We may also assume that for
any such $z_0$, $z_0\overline{K_H} \subseteq U_3$. Then we have
$t_{(j,r)} f_j(w_0)y_0z_0^{-1}=\epsilon(i,j,k,r)$. We can then
write $x=b_r w$ for some $w\in U_{m+3}\cap X$ and define
$z=\epsilon(i,j,k,r)^{-1}t_{(j,r)} f_j(w)y$. Then $z\in
z_0\overline{K_H} \subseteq U_3$ and $((x,i),(y,j),(z,k))\in
\Gamma$.
\end{claimproof}

Let $\odot$ the boolean function induced by $\Gamma$ on
$W^{cl}/E^{cl}$. As associativity is a closed condition
$\Gamma^{cl}$ is the closure on $\Gamma$ on each $W_i\times W_j
\times W_k$, $\odot$ is associative. Existence of inverses is
proved as the existence part of the previous claim, fixing $z=e$
and looking for $y$. Therefore we have equipped $W^{cl}/E^{cl}$
with a group structure. This group is $H'$.

The sets $W^{cl}_k$ are multi semialgebraic and $E^{cl}$ and
$\Gamma^{cl}$ are algebraic. One can easily show that $H'$ is in
fact multi semialgebraic (or is in bijection with a multi
semialgebraic group) with underlying set
\[W_0 \cup (W_1\setminus \pi^{-1}(\pi(W_0))) \cup (W_2 \setminus
\pi^{-1}(\pi(W_1)\cap \pi(W_0))) \cup \cdots.\]

Then $G$ embeds definably into $H'$ as the subgroup $W/E$ of finite
index, which completes the proof of the theorem.
\end{proof}

\subsection{Additional comments}

\begin{itemize}
\item Will Johnson has studied in \cite{joh_prc} the model companion of fields with $n$ distinct orderings. This is a particular case of bounded PRC fields. Johnson proves that a Lascar-invariant quantifier-free type extends to a Lascar-invariant measure. It seems likely that an adaptation of those results should show that in this case, any group with f-generics has a translation-invariant measure.
\item We expect those results to generalize to groups definable in the main sort of a pseudo p-adically closed field. This will be dealt with in future work.
\end{itemize}

\appendix

\section{Shelah expansion and NTP$_2$}

\begin{thm}\label{th_shelahexp}
Let $T$ be NTP$_2$ in a language $L$ and assume that we have an expansion $T'$ of $T$ to a  language $L'$ by externally definable sets. Assume furthermore that $T'$ has elimination of quantifiers in $L'$ and the only additional predicates in $L'$ are traces of externally definable NIP formulas. Then $T'$ is NTP$_2$.
\end{thm}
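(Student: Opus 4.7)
The plan is to suppose for contradiction that $T'$ has TP$_2$ via some $L'$-formula $\psi(x;y)$ and derive a contradiction by transferring the TP$_2$ data to an $L$-formula of $T$.

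Let $(a_{l,j})_{l,j<\omega}$ witness TP$_2$ for $\psi$ in a monster $\mathcal{U}'\models T'$, with $k$-inconsistent rows and consistent paths. A standard Ramsey--Erd\H{o}s--Rado extraction lets me assume the array is mutually indiscernible over $\emptyset$ in $\mathcal{U}'$, hence also in the $L$-reduct. Applying QE in $L'$, I write
\[
\psi(x;y)=\theta\bigl(x,y,R_1(\bar v_1(x,y)),\dots,R_n(\bar v_n(x,y))\bigr),
\]
where $\theta$ is an $L$-formula (with truth-valued slots for the $R_i$) and each $R_i$ is one of the added predicates. By hypothesis each $R_i$ is the trace of an NIP $L$-formula, so by compactness I can find an $L$-elementary extension $N\succeq\mathcal{U}$ of the $L$-reduct $\mathcal{U}$ of $\mathcal{U}'$, together with parameters $c_i\in N$, such that $R_i(u)\leftrightarrow\phi_i(u,c_i)$ for all $u\in\mathcal{U}$ and some NIP $L$-formula $\phi_i$. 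This yields an $L$-formula
\[
\psi^*(x;y,c):=\theta\bigl(x,y,\phi_1(\bar v_1(x,y),c_1),\dots,\phi_n(\bar v_n(x,y),c_n)\bigr)
\]
with parameter $c=(c_1,\dots,c_n)\in N$ that agrees with $\psi$ on $\mathcal{U}$-realizations.

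The principal obstacle is to convert the TP$_2$ data for $\psi$ in $\mathcal{U}'$ into a TP$_2$ configuration for $\psi^*$ viewed as an $L$-formula in $T$. My plan is to exploit the NIP-ness of each $\phi_i$ to refine $(a_{l,j})$ to a sub-array that is mutually indiscernible over $c$ in $N$: since each $\phi_i$ has bounded alternation rank along an indiscernible sequence, an iterated row-by-row extraction (one predicate at a time) produces such a sub-array. Using mutual indiscernibility over $c$, one then argues that the $k$-inconsistency of rows for $\psi$ in $\mathcal{U}$ together with path-consistency (witnessed in $\mathcal{U}$) propagates to the same TP$_2$ configuration for $\psi^*$ in $N\models T$. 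This contradicts NTP$_2$ of $T$ and completes the proof.

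The delicate part, and main technical obstacle, is carrying out this extraction uniformly across all $n$ predicates and all rows while preserving both halves of the TP$_2$ pattern, and in particular ensuring that $k$-inconsistency in $\mathcal{U}$ lifts to $k$-inconsistency in $N$ after the refinement. The NIP hypothesis on each added predicate is essential: without it, the sub-array extraction could fail to respect the TP$_2$ structure, since arbitrary new predicates need not stabilize along indiscernibles.
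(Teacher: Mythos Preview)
Your approach has a genuine gap at exactly the point you yourself flag as ``delicate'': lifting $k$-inconsistency of rows from $\mathcal U$ to $N$. The formula $\psi^*(x;y,c)$ agrees with $\psi$ on $\mathcal U$, so path-consistency transfers trivially via the same witnesses $a_\eta\in\mathcal U$. But $k$-inconsistency of a row in $\mathcal U$ only says that no $\mathcal U$-point satisfies $k$ of the instances; since $c\in N\setminus\mathcal U$, the set $\psi^*(N;a_{l,j},c)$ can be strictly larger than $\psi^*(\mathcal U;a_{l,j},c)$, and nothing you have written prevents the rows from becoming consistent in $N$. Making the array mutually indiscernible over $c$ does not help: indiscernibility controls how truth values vary along the array, not whether the solution set over $N$ is larger than over $\mathcal U$. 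So the proposed extraction, however carefully done, does not yield TP$_2$ for $\psi^*$ in $T$.

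The paper closes this gap with a different, sharper use of NIP: honest definitions. Working in a saturated pair $(N_1,M_1)\succ(N,M)$, one produces an $L$-formula $\theta(x,y;e)$ with $e\in M_1$ such that $\theta(M;e)=\psi(M)$ \emph{and} $\theta(M_1;e)\subseteq\tilde\psi(M_1;d)$. The first condition preserves path-consistency (same witnesses $a_\eta$). The containment handles $k$-inconsistency: the $\tilde\psi$-rows are $k$-inconsistent on $M$, hence on $M_1$ by elementarity of the pair, and since $\theta\subseteq\tilde\psi$ on $M_1$ the $\theta$-rows are $k$-inconsistent there too. This gives TP$_2$ for an $L$-formula in $M_1\models T$, the desired contradiction. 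The missing ingredient in your argument is precisely this ``sandwich'' property of honest definitions; without it (or an equivalent device) the transfer of row-inconsistency fails.
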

\begin{proof}
Let $M\models T'$ be $\aleph_1$-saturated and let $M\prec N$ be $|M|^+$-saturated. The property of NTP$_2$ for formulas is preserved by finite disjunctions, but not by finite conjunctions in general. It is enough to show that a formula of the form $\phi(x;y)\wedge \psi(x;y)$ is NTP$_2$, where $\phi(x;y)\in L$ and $\psi(x;y) \in L'$ is such that there is an NIP $L$-formula $\tilde \psi(x;y;d)\in L(N)$ such that $\psi(M)=\tilde \psi(N;d)\cap M$. Let $(N,M)$ denote the expansion of $N$ with a new unary predicate naming $M$. Let $(N_1,M_1)\succ (N,M)$ be a sufficiently saturated elementary extension. By Proposition 1.7 in \cite{ChSi2}, there is $\theta(x,y;e)\in L(M_1)$ such that $\theta(M;e)=\psi(M)$ and $\theta(M_1;e)\subseteq \tilde \psi(M_1;d)$. Note that the formula $\theta(x,y;e)$ could have IP.

Now assume that we are given a witness of TP$_2$ for $\phi(x;y)\wedge \psi(x;y)$. Namely, we have an array $(b_{i,j}:i,j<\omega)$ and some $k$ such that each line $\{\phi(x;b_{i,j_0})\wedge \psi(x;b_{i,j_0}):i<\omega\}$ is $k$-inconsistent and for every $\eta:\omega \to \omega$, the path $\{\phi(x;b_{\eta(j),j})\wedge \psi(x;b_{\eta(j),j}):j<\omega\}$ is consistent, hence realized by some $a_\eta\in M$.
Note that by elementarity of the extension $(N_1,M_1)\succ (N,M)$, for each $i$, the intersection of any $k$ formulas in
 $\{\phi(x;b_{i,j_0})\wedge \tilde \psi(x,b_{i,j_0};d):i<\omega\}$ with $M_1$ is empty.
Now the properties of the array are preserved if we replace the formula $\phi(x;y)\wedge \psi(x;y)$ by $\phi(x,y)\wedge \theta(x,y;e)$: the paths are still consistent, using the same witnesses $a_\eta$, and the lines are $k$-inconsistent (in the structure $M_1$) since $\theta(M_1;e)\subseteq \tilde \psi(M_1;d)$. This shows that the formula $\phi(x,y)\wedge \theta(x,y;e)$ has TP$_2$ in $M_1$ which contradicts the hypothesis that $T$ is NTP$_2$.
\end{proof}

\bibliography{PRC}

\end{document}